\documentclass[10pt]{amsart}

\newif\ifarxiv
\arxivtrue

\usepackage[T1]{fontenc}
\usepackage{lmodern}
\usepackage{amsmath}
\usepackage{amssymb}
\usepackage{amsthm}
\usepackage{mathtools}
\usepackage{mathrsfs}
\usepackage{xcolor}
\usepackage{tikz}
\PassOptionsToPackage{hyphens}{url}
\usepackage{hyperref}
\usepackage{microtype}
\usepackage[textwidth=400pt,textheight=640pt,hcentering,heightrounded]{geometry}

\hypersetup{
  colorlinks=true,
  linkcolor=blue!55!black,
  citecolor=green!40!black,
  urlcolor=blue!65!black
}
\mathtoolsset{showonlyrefs=true}
\allowdisplaybreaks
\newtheorem{theorem}{Theorem}[section]
\newtheorem{proposition}[theorem]{Proposition}
\newtheorem{lemma}[theorem]{Lemma}
\newtheorem{corollary}[theorem]{Corollary}

\theoremstyle{definition}
\newtheorem{definition}[theorem]{Definition}

\theoremstyle{remark}
\newtheorem{remark}[theorem]{Remark}
\newtheorem{fact}[theorem]{Fact}

\makeatletter
\def\subsection{\@startsection{subsection}{2}%
  \z@{.5\linespacing\@plus.7\linespacing}{.3\linespacing}%
  {\normalfont\bfseries}}
\makeatother

\newcommand{\C}{\mathbb C}
\newcommand{\Q}{\mathbb Q}
\newcommand{\Z}{\mathbb Z}
\newcommand{\N}{\mathbb N}
\newcommand{\R}{\mathbb R}
\newcommand{\bbeta}{\boldsymbol{\beta}}
\newcommand{\bv}{\boldsymbol{v}}
\newcommand{\bu}{\boldsymbol{u}}
\newcommand{\bg}{\boldsymbol{g}}
\newcommand{\bh}{\boldsymbol{h}}
\newcommand{\bs}{\boldsymbol{s}}
\newcommand{\bx}{\boldsymbol{x}}
\newcommand{\bpartial}{\boldsymbol{\partial}}
\newcommand{\nsupp}{\operatorname{nsupp}}
\newcommand{\inw}{\operatorname{in}_{\boldsymbol w}}
\newcommand{\Span}{\operatorname{Span}_{\C}}
\newcommand{\rank}{\operatorname{rank}}
\newcommand{\gr}{\operatorname{gr}}
\newcommand{\tp}[1]{\prescript{t}{}{#1}}
\newcommand{\Rhat}{\widehat R}
\newcommand{\Shat}{\widehat S}
\newcommand{\cB}{\mathcal B}
\newcommand{\cC}{\mathcal C}
\newcommand{\cD}{\mathcal D}
\newcommand{\cE}{\mathcal E}
\newcommand{\cF}{\mathcal F}
\newcommand{\cG}{\mathcal G}
\newcommand{\cH}{\mathcal H}
\newcommand{\cI}{\mathcal I}
\newcommand{\cJ}{\mathcal J}
\newcommand{\cN}{\mathcal N}
\newcommand{\cP}{\mathcal P}
\newcommand{\cS}{\mathcal S}
\newcommand{\cV}{\mathcal V}
\newcommand{\cW}{\mathcal W}
\newcommand{\cQ}{\mathcal Q}
\newcommand{\cT}{\mathcal T}
\newcommand{\cU}{\mathcal U}
\newcommand{\cX}{\mathcal X}
\newcommand{\sE}{\mathscr E}
\newcommand{\sF}{\mathscr F}
\newcommand{\sO}{\mathscr O}
\newcommand{\sP}{\mathscr P}
\newcommand{\sS}{\mathscr S}
\newcommand{\fD}{\mathfrak D}
\newcommand{\fm}{\mathfrak m}
\newcommand{\fs}{\mathfrak s}
\newcommand{\ba}{\boldsymbol{a}}
\newcommand{\bb}{\boldsymbol{b}}
\newcommand{\bp}{\boldsymbol{p}}
\newcommand{\br}{\boldsymbol{r}}
\newcommand{\bt}{\boldsymbol{t}}
\newcommand{\bw}{\boldsymbol{w}}
\newcommand{\by}{\boldsymbol{y}}
\newcommand{\bz}{\boldsymbol{z}}
\newcommand{\balpha}{\boldsymbol{\alpha}}
\newcommand{\bgamma}{\boldsymbol{\gamma}}
\newcommand{\bxi}{\boldsymbol{\xi}}
\newcommand{\bzeta}{\boldsymbol{\zeta}}
\newcommand{\bk}{\boldsymbol{k}}
\newcommand{\bzero}{\boldsymbol{0}}
\newcommand{\bone}{\boldsymbol{1}}
\newcommand{\Sol}{\operatorname{Sol}}
\newcommand{\im}{\operatorname{im}}
\newcommand{\supp}{\operatorname{supp}}
\newcommand{\Sing}{\operatorname{Sing}}
\newcommand{\pr}{\operatorname{pr}}
\newcommand{\amb}{\mathrm{amb}}
\newcommand{\intr}{\mathrm{intr}}
\newcommand{\ord}{\mathrm{ord}}

\hypersetup{
  pdftitle={A codimension formula for intrinsic perturbation of A-hypergeometric series},
  pdfauthor={NAKANO Ryunosuke},
  pdfsubject={A codimension formula for intrinsic perturbation of A-hypergeometric series. Mathematics Subject Classification 2020: 33C70 (Primary); 13D07, 13P10, 13F55, 14M25 (Secondary)},
  pdfkeywords={A-hypergeometric system, Frobenius method, intrinsic perturbation, fake exponent, Nilsson series, Groebner basis, affine semigroup, colon ideal, syzygy, complete intersection}
}

\title[A codimension formula for intrinsic perturbation]{A codimension formula
for intrinsic perturbation of \(A\)-hypergeometric series}
\author{NAKANO Ryunosuke}
\address{Graduate School of Science, Hokkaido University, Sapporo 060-0810, Japan}
\email{nakano.ryunosuke.i3@elms.hokudai.ac.jp}
\date{}

\subjclass[2020]{Primary 33C70; Secondary 13D07, 13P10, 13F55, 14M25}
\keywords{\(A\)-hypergeometric system, Frobenius method, intrinsic perturbation, fake exponent, Nilsson series, Gr\"obner basis, affine semigroup, colon ideal, syzygy, complete intersection}

\begin{document}

\begin{abstract}
		We study the canonical formal solution space of a homogeneous \(A\)-hypergeometric system in a generic direction.
	We present as a finite-dimensional cokernel the quotient of that space by the span of the canonical series obtained by intrinsic perturbation, taken over all exponents occurring in that space and all ordered negative support families, and we compute the codimension of that span.
	We show that the presentation and the codimension transfer to the holomorphic solutions on a common nonsingular domain.
	We apply these results, for every integer \(q\geq1\), to a homogeneous \(A\)-hypergeometric system of lattice rank 2 with \(5q\) columns and show that the span of the canonical series obtained by intrinsic perturbation has codimension at least \(\lceil3q^2/4\rceil\) in the canonical formal solution space.

\end{abstract}

\maketitle

\section{Introduction}

The Frobenius method produces logarithmic solutions of a resonant differential system by perturbing an exponent and differentiating with respect to the resulting perturbation parameters.
For \(A\)-hypergeometric systems, this method is developed by Saito and by Okuyama--Saito; see \cite{Sai20,OS22}.
Let \(A\in\Z^{d\times n}\) be the matrix of the system, put \(L=\ker_{\Z}(A)\) and \(r=\rank L\), and write \(\bt=\tp{(t_1,\ldots,t_n)}\).
Okuyama--Saito \cite{OS25} fix a fake exponent \(\bv\) and an ordered negative support family \(\cN\), construct the ambient coefficient space, and compare it with the coefficient space obtained by intrinsic perturbation within \(L\); their Theorem~5.5 identifies the ambient coefficient space with the full coefficient space, and their Proposition~6.2 characterizes the equality of the intrinsic and ambient coefficient spaces by an equality of two colon ideals.
The companion paper \cite{Nak26} measures by an intrinsic-perturbation obstruction module the failure of the intrinsic and ambient coefficient spaces to agree at a fixed fake exponent, and gives configurations for which this module is nonzero.
However, the obstruction at a fixed fake exponent does not determine how much of the solution space the canonical series obtained by intrinsic perturbation span.
In this paper, we assemble the local obstructions along each \(L\)-coset and compute the codimension of the span of the canonical series obtained by intrinsic perturbation in the canonical formal solution space.

We work in \(\Rhat=\C[[t_1,\ldots,t_n]]\) with the ideal \(U=\langle A\bt\rangle\), the monomial ideals \(M_{\cN}\) and \(P_{\cN}(\bt)\) of \eqref{eq:obstruction-M} and \eqref{eq:obstruction-P}, and the monomial \(e=\bt^{\nsupp(\bv)\setminus K_{\cN}}\), where \(\nsupp(\bv)\) is the set of indices \(j\) with \(v_j\) a negative integer and \(K_{\cN}\) is the intersection of the members of \(\cN\).
The intrinsic-perturbation obstruction module introduced in \cite{Nak26} is the quotient
\[
	\fD_{\cN}(e)
	=\frac{(U+P_{\cN}(\bt)):e}
	{(UM_{\cN}+P_{\cN}(\bt)):e}.
\]
Fact~\ref{fact:finite-presentation} presents this module by two ideals \(J_{\cN}^{\amb}\subseteq J_{\cN}^{\intr}\) of a polynomial ring in \(r\) variables, and Fact~\ref{fact:defect-coefficient-duality} identifies the graded dual of their quotient with the ambient coefficient space modulo the intrinsic one.

For a homogeneous system and a generic direction, Theorem~\ref{thm:formal-solution-exact-cokernel} presents, as the cokernel of a block-triangular map between finite-dimensional spaces, the quotient of the canonical formal solution space by the span of the canonical series obtained by intrinsic perturbation.
Here the span is taken over all exponents occurring in that space and all ordered negative support families.
Corollary~\ref{cor:formal-solution-exact-codimension} computes the codimension of that span as the sum of the local codimensions at the exponents, less a correction term for each \(L\)-coset.
Theorem~\ref{thm:analytic-realization} shows that summation on a nonempty simply connected nonsingular open set, for a choice of the branches of the logarithms, carries this presentation and this codimension to the holomorphic solution space.

We apply these results to two of the configurations of \cite{Nak26}.
For the five-column configuration of Theorem~5.1 of \cite{Nak26}, Corollary~\ref{cor:lattice-counterexample-formal-codimension} gives codimension one.
For the family of lattice rank 2 of Theorem~6.2 of \cite{Nak26}, Theorem~\ref{thm:fixed-rank-unbounded-solution-codimension} gives codimension at least \(\lceil3q^2/4\rceil\), so this codimension is unbounded at fixed lattice rank.
\ifarxiv
The ancillary scripts use exact arithmetic to check the computations giving codimension one in the canonical formal solution space for the five-column example and the local codimensions obtained from all ordered negative support families for \(q=1,2,3,4\) in the family in Section~\ref{sec:applications}.
\fi

\section{Notation and the perturbation constructions}
\label{sec:fake-exponents}

We recall the objects that the codimension formulas of this paper use, and we fix notation for them, following \cite{GKZ89,GKZ94,Sai02,SST00} and \cite{OS25}.

\subsection{The system and a generic direction}

Throughout this paper, \(A=[\ba_1,\ldots,\ba_n]\in\Z^{d\times n}\) is a matrix of rank \(d\) whose columns lie in an affine hyperplane that does not contain the origin.
The lattice of relations among the columns is \(L=\ker_{\Z}(A)\), and we put \(r=\rank L=n-d\) and \(\N=\{0,1,2,\ldots\}\).
Every \(\bu\in L\) is written as \(\bu=\bu_+-\bu_-\), where \(\bu_+\) and \(\bu_-\) lie in \(\N^n\) and have disjoint supports, and we put \(|\bu|=\sum_{j=1}^n u_j\).

The toric ideal of \(A\) is \(I_A= \left\langle \bpartial^{\bu_+} -\bpartial^{\bu_-} \ \middle|\ \bu\in L \right\rangle \subseteq\C[\partial_1,\ldots,\partial_n]\).
We write \(D\) for the Weyl algebra in \(\bx=\tp{(x_1,\ldots,x_n)}\) and \(\bpartial=\tp{(\partial_1,\ldots,\partial_n)}\).
For \(\bbeta\in\C^d\), the \(A\)-hypergeometric ideal \(H_A(\bbeta)\) is generated in \(D\) by \(I_A\) together with the Euler operators \(E_i-\beta_i =\sum_{j=1}^n a_{ij}x_j\partial_j-\beta_i\) for \(i=1,\ldots,d\), and we put \(M_A(\bbeta)=D/H_A(\bbeta)\).
This system is introduced in \cite{GKZ89,GKZ94}, and the fake-exponent formalism recalled below is developed in \cite{Sai02,SST00}.

Throughout this paper, \(\bw\in\R^n\) denotes a generic weight.
By genericity we mean that the initial ideal \(\inw(I_A)\) is monomial and that \(\bw\) lies in the interior of a full-dimensional cone of the small Gr\"obner fan of \(H_A(\bbeta)\), so that the initial ideal \(\operatorname{in}_{(-\bw,\bw)}(H_A(\bbeta))\) in the Weyl algebra is constant on that interior; see \cite{SST00}.
Weights of this kind occur in every full-dimensional Gr\"obner cone of \(I_A\).
A wall of either of these fans is the locus on which two monomials of the same \(A\)-degree acquire equal \(\bw\)-weight, and the difference of the two exponents is a nonzero element of \(L\).
It follows that a weight is generic whenever its values on the nonzero elements of \(L\) are all nonzero, because such a weight lies on no wall.

Let \(\cG_{\bw} = \left\{ \bpartial^{\bg^{(i)}_+} -\bpartial^{\bg^{(i)}_-} \ \middle|\ i=1,\ldots,g_{\bw} \right\}\) be the reduced Gr\"obner basis of \(I_A\), in which each binomial is written with its initial monomial first.
Put \(\bg^{(i)}=\bg^{(i)}_+-\bg^{(i)}_-\), so that \(\bw\cdot\bg^{(i)}>0\) for \(i=1,\ldots,g_{\bw}\).
These vectors generate the affine semigroup \(\cC(\bw) =\sum_{i=1}^{g_{\bw}}\N\bg^{(i)}\subseteq L\).
We say that \(\cC(\bw)\) is normal if it contains every element of the group generated by \(\cC(\bw)\) that lies in the real cone spanned by \(\cC(\bw)\).
Since \(\cC(\bw)\) is contained in \(L\), this normality is a property of the relations among the columns of \(A\), not of the affine semigroup that the columns generate.
By the column matroid of \(A\) we mean the matroid that the columns of \(A\) represent over \(\Q\).

For \(\bv\in\C^n\), put \(\nsupp(\bv)=\{j\mid v_j\in\Z_{<0}\}\).
Following \cite[Section~3.2]{SST00}, we call \(\bv\) a fake exponent of \(H_A(\bbeta)\) in the direction \(\bw\) when \(A\bv=\bbeta\) and \(\bv\) is a zero of the fake indicial ideal.
Corollary~3.2.3 of \cite{SST00} states that a vector \(\bv\) with \(A\bv=\bbeta\) is a fake exponent if and only if some standard pair \((\ba,\sigma)\) of \(\inw(I_A)\) satisfies \(v_j=a_j\) for every \(j\notin\sigma\).
We use this description of the fake exponents throughout.

\subsection{Negative supports and their families}

Fix a fake exponent \(\bv\) for the remainder of this section.
Each \(\bu\in L\) determines the set \(I_{\bu}=\nsupp(\bv+\bu)\); in particular \(I_{\bzero}=\nsupp(\bv)\), and these sets form \(\sS(\bv)=\{I_{\bu}\mid\bu\in L\}\).
For \(I\in\sS(\bv)\), the support fiber of \(I\) is \(\cF_I(\bv)=\{\bu\in L\mid I_{\bu}=I\}\), and we shorten this notation to \(\cF_I\) because \(\bv\) is fixed.
The distinguished collection of \cite[Section~3]{OS25} consists of those \(I\in\sS(\bv)\) on whose support fiber \(\bw\) takes only nonnegative values, and we write
\[
	\cN_{\bv}
	=
	\left\{
	I\in\sS(\bv)
	\ \middle|\
	\bw\cdot\bu\geq0
	\text{ for every }\bu\in\cF_I
	\right\}.
\]
Since \(\bv\) is a fake exponent, the set \(I_{\bzero}\) lies in \(\cN_{\bv}\), as the discussion preceding \cite[Definition~3.1]{OS25} shows.
A negative support family for \(\bv\) is a subset \(\cN\) with \(I_{\bzero}\in\cN\subseteq\cN_{\bv}\), and we put \(K_{\cN}=\bigcap_{I\in\cN}I\).
Such a family is called ordered in \cite[Definition~3.1]{OS25} when every \(J\in\sS(\bv)\) contained in some \(I\in\cN\) again lies in \(\cN\).
The distinguished collection \(\cN_{\bv}\) is nowhere assumed to be ordered.

\subsection{Perturbation series and coefficient spaces}

We now recall the intrinsic perturbation series and the three spaces of coefficients of \(\bx^{\bv}\) that the constructions of \cite{OS25} produce.
Choose a Gale dual \(B=(\bb^{(1)},\ldots,\bb^{(r)})\) of \(A\) whose columns form a \(\Z\)-basis of \(L\), and put \(\bs=\tp{(s_1,\ldots,s_r)}\).
For \(\bz\in\C^n\) and \(\bp\in\N^n\), the falling factorial is \([\bz]_{\bp} = \prod_{j=1}^n z_j(z_j-1)\cdots(z_j-p_j+1)\).
For \(\bu\in L\), set
\[
	a_{\bu}(\bs)
	=
	\frac{[\bv+B\bs]_{\bu_-}}
	{[\bv+B\bs+\bu]_{\bu_+}}.
\]
Given a negative support family \(\cN\), put \(m_{\bv,\cN}(\bs) = \prod_{j\in I_{\bzero}\setminus K_{\cN}}(B\bs)_j\) and define the intrinsic perturbation series by
\[
	\Psi_{\cN}(\bx,\bs)
	=
	m_{\bv,\cN}(\bs)
	\sum_{\substack{\bu\in L\\I_{\bu}\in\cN}}
	a_{\bu}(\bs)\bx^{\bv+B\bs+\bu}.
\]
Only the polynomial expression of \(m_{\bv,\cN}\) depends on the choice of \(B\), and the index set \(I_{\bzero}\setminus K_{\cN}\) does not.
A change of Gale dual to \(B'=BT\), with \(T\in\operatorname{GL}_r(\Z)\), transforms the expression for \(B\) into the expression for \(B'\) by the substitution \(f(\bs)\mapsto f(T\bs)\).

The full coefficient space at \(\bv\) for \(\cN\) is the space of logarithmic polynomials that arise as the coefficient of \(\bx^{\bv}\) in those formal series solutions of \(H_A(\bbeta)\) in the direction \(\bw\) all of whose negative supports \(I_{\bu}\) lie in \(\cN\).
If \(\cN\) is ordered, the intrinsic coefficient space at \(\bv\) for \(\cN\) is the space of coefficients of \(\bx^{\bv}\) that the construction of \cite[Theorem~3.2]{OS25} produces from \(\Psi_{\cN}(\bx,\bs)\).
Under the same hypothesis on \(\cN\), the ambient coefficient space at \(\bv\) for \(\cN\) is the space of coefficients of \(\bx^{\bv}\) that the ambient perturbation construction of \cite[Theorem~5.5]{OS25} produces.

For each \(i\), put \(G^{(i)}(\bv) =I_{-\bg^{(i)}}\setminus I_{\bzero}\), and define
\[
	P_B(\bv)
	=
	\left\langle
	\prod_{j\in G^{(i)}(\bv)}(B\bs)_j
	\ \middle|\
	i=1,\ldots,g_{\bw}
	\right\rangle
	\subseteq\C[\bs].
\]
For \(f(\bs)\in\C[\bs]\) and \(q(\bpartial_{\bs})\in\C[\bpartial_{\bs}]\), put
\[
	\left\langle f,q(\bpartial_{\bs})\right\rangle_{\bs}
	=
	\left.q(\bpartial_{\bs})f(\bs)\right|_{\bs=\bzero}.
\]
For a homogeneous ideal \(I\subseteq\C[\bs]\), this pairing defines the inverse system
\[
	I^\perp
	=
	\left\{
	q(\bpartial_{\bs})
	\ \middle|\
	\left\langle f,q(\bpartial_{\bs})\right\rangle_{\bs}=0
	\text{ for every }f\in I
	\right\}.
\]
We use this construction for \(P_B(\bv)\) and, in Section~\ref{subsec:finite-boundary-presentation}, for the ideals \(J_{\cN}^{\amb}\) and \(J_{\cN}^{\intr}\).

The negative support families, the perturbation constructions, and the ideal \(P_B(\bv)\) are taken from \cite{Sai20,OS22,OS25}.
The affine semigroup \(\cC(\bw)\) also occurs in \cite{Nag26}.

\begin{remark}
	\label{rem:homogeneity-scope}
	The standing assumption on the columns of \(A\) holds if and only if some linear functional \(\bh\) satisfies \(\bh\ba_j=1\) for every \(j\), and this assumption makes \(I_A\) homogeneous for the standard grading.
	The assumption is used where a weight is moved to \(\bw+\lambda\bone\), with \(\bone\) the all-ones vector and \(\lambda\) a positive number, without changing the weight of any lattice vector.
	That move requires \(\bone\) to lie in the row space of \(A\), and the move underlies the canonical formal solution space, the codimension formula, and the analytic realization.
	The assumption also enters through the results of \cite{OS25}, which are stated for a homogeneous configuration and supply the identification of the ambient coefficient space with the full coefficient space.
	We therefore impose the assumption throughout instead of tracking it statement by statement.
\end{remark}

\section{The obstruction to intrinsic perturbation}
\label{sec:lattice-obstruction}

Ambient perturbation is not restricted to directions in \(L\), and by \cite[Theorem~5.5]{OS25} it realizes the full coefficient space for an ordered negative support family.
Intrinsic perturbation uses only directions in \(L\) and gives a subspace of that coefficient space.
In this section, we recall from \cite{Nak26} the quotient of the two colon ideals of \cite[Proposition~6.2]{OS25} and the two ideals in \(r\) variables that present it.

Table~\ref{tab:obstruction-notation} collects the notation of this section.
\begin{table}[ht]
	\centering
	\small
	\begin{tabular}{@{}p{.29\linewidth}p{.65\linewidth}@{}}
		\(\sS(\bv)\)                            &
		the negative supports \(I_{\bu}\) that occur for \(\bu\in L\)                               \\
		\(\cN_{\bv}\)                           &
		the distinguished collection of \cite[Section~3]{OS25}                                      \\
		\(\cN\)                                 &
		a negative support family for \(\bv\)                                                       \\
		\(\Rhat\), \(R_0\)                      &
		\(\C[[t_1,\ldots,t_n]]\) and \(\C[t_1,\ldots,t_n]\), respectively                           \\
		\(\Shat\), \(S_0\)                      &
		\(\C[[s_1,\ldots,s_r]]\) and \(\C[s_1,\ldots,s_r]\), respectively                           \\
		\(K_{\cN}\), \(E\), \(e\)               &
		\(\bigcap_{I\in\cN}I\),
		\(I_{\bzero}\setminus K_{\cN}\), and
		\(\bt^E\), respectively                                                                     \\
		\(M_{\cN}\), \(P_{\cN}(\bt)\)           &
		the monomial ideals in \eqref{eq:obstruction-M} and \eqref{eq:obstruction-P}, respectively  \\
		\(Q_{\cN}(\bt)\)                        &
		\(UM_{\cN}+P_{\cN}(\bt)\)                                                                   \\
		\(\cH_{\cN}\)                           &
		the antichain in \eqref{eq:boundary-H-antichain}                                            \\
		\(\fD_{\cN}(e)\)                        &
		the intrinsic-perturbation obstruction module in \eqref{eq:defect-module}                   \\
		\(J_{\cN}^{\amb}\), \(J_{\cN}^{\intr}\) &
		the two ideals in \eqref{eq:boundary-comparison-ideals}
	\end{tabular}
	\caption{Notation for the obstruction module and its presentation.}
	\label{tab:obstruction-notation}
\end{table}

\subsection{The obstruction module}

We keep fixed the fake exponent \(\bv\) and the negative support family \(\cN\) of Section~\ref{sec:fake-exponents}.
None of the constructions below requires \(\cN\) to be ordered, and that hypothesis enters only through the interpretation in terms of the ambient and intrinsic coefficient spaces.
Write \(\bt^J=\prod_{j\in J}t_j\), with \(\bt^{\varnothing}=1\), and put \(e=\bt^{I_{\bzero}\setminus K_{\cN}}\).
Inside the complete local ring \(\Rhat=\C[[t_1,\ldots,t_n]]\), with \(\bt=\tp{(t_1,\ldots,t_n)}\), define
\begin{align}
	U&=\langle A\bt\rangle,
	\label{eq:obstruction-U}\\
	M_{\cN}
	&=\left\langle
	\bt^{I\setminus K_{\cN}}
	\ \middle|\ I\in\cN
	\right\rangle,
	\label{eq:obstruction-M}\\
	P_{\cN}(\bt)
	&=\left\langle
	\bt^{(I\cup J)\setminus K_{\cN}}
	\ \middle|\
	I\in\cN,\ J\in\sS(\bv)\setminus\cN
	\right\rangle,
	\label{eq:obstruction-P}\\
	Q_{\cN}(\bt)
	&=UM_{\cN}+P_{\cN}(\bt).
	\label{eq:obstruction-Q}
\end{align}
Let \(\Shat=\C[[s_1,\ldots,s_r]]\), and consider the surjection
\[
	\Phi:\Rhat\longrightarrow\Shat, \qquad t_j\longmapsto(B\bs)_j,
\]
whose kernel is \(U\).
Write \(\Pi_{\cN}=\Phi(P_{\cN}(\bt))\) and \(m_{\bv,\cN}=\Phi(e)\).

\begin{fact}
	\label{fact:ambient-realization}
	Let \(A\) be homogeneous, let \(\bw\) be generic, let \(\bv\) be a fake exponent of \(H_A(\bbeta)\) in the direction \(\bw\), and let \(\cN\) be an ordered negative support family for \(\bv\).
	By \cite[Theorem~5.5]{OS25}, each series obtained by ambient perturbation is a formal series solution of \(H_A(\bbeta)\) in the direction \(\bw\), and the coefficients of \(\bx^{\bv}\) in these series are exactly the elements of the full coefficient space at \(\bv\) for \(\cN\).
\end{fact}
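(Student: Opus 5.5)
Since this is a Fact that only transcribes \cite[Theorem~5.5]{OS25}, the plan is to check that the hypotheses and objects here agree with those of \cite{OS25}, and then to split the conclusion into its two inclusions. No new argument is intended.

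\textbf{Matching the hypotheses.} I would go through the standing hypotheses one by one.
\begin{itemize}
\item Homogeneity of \(A\) is exactly the assumption under which \cite{OS25} is stated; see Remark~\ref{rem:homogeneity-scope}.
\item Genericity of \(\bw\) here means two things: \(\inw(I_A)\) is monomial, and \(\bw\) lies in an open cone of the small Gr\"obner fan. I would check this gives the setting of \cite{OS25}. In particular, the fake exponents via \cite[Corollary~3.2.3]{SST00}, the sets \(I_{\bu}\), the support fibers \(\cF_I\) and the collection \(\cN_{\bv}\) should be defined as there.
\item ``Ordered'' for \(\cN\) is taken verbatim from \cite[Definition~3.1]{OS25}, and so is the condition \(I_{\bzero}\in\cN\subseteq\cN_{\bv}\).
\end{itemize}
I would also note that changing the Gale dual \(B\) only substitutes \(\bs\mapsto T\bs\). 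Such a substitution changes neither the series produced nor their \(\bx^{\bv}\)-coefficients as a space. So the basis choice made here is harmless.

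\textbf{The two claims.} The first claim is that each ambient-perturbation series is a formal solution of \(H_A(\bbeta)\) in the direction \(\bw\). This is the first part of \cite[Theorem~5.5]{OS25}, applied with the perturbation parameters specialized and differentiated as in that construction.

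The second claim is an equality of spaces, which I would prove as two inclusions.
\begin{itemize}
\item \emph{Ambient coefficients lie in the full space.} In the ambient series, only exponents \(\bv+\bu\) with \(I_{\bu}\in\cN\) occur, since the sum runs over such \(\bu\). So each ambient series is a formal solution all of whose negative supports lie in \(\cN\). By the definition of the full coefficient space, its \(\bx^{\bv}\)-coefficient belongs to that space.
\item \emph{The full space lies in the ambient coefficients.} Every solution with negative supports in \(\cN\) has \(\bx^{\bv}\)-coefficient realized by ambient perturbation. This is the substantive content of \cite[Theorem~5.5]{OS25}, and I would simply cite it.
\end{itemize}

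\textbf{Main obstacle.} The hard step is a bookkeeping one: making sure the phrase ``full coefficient space'' used here means exactly what \cite{OS25} means. Two points need checking: that it consists of solutions in the direction \(\bw\), and that the negative-support restriction is imposed in the same way. I would first compare the definitions directly. If the normalizations differ, for instance a \(\bw\) versus \(-\bw\) sign convention, I would reconcile them through the description of \(\cN_{\bv}\) by the sign of \(\bw\cdot\bu\) on support fibers.
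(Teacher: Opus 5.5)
Your proposal is correct and takes the same route as the paper, which gives no separate proof: the Fact transcribes \cite[Theorem~5.5]{OS25} once the hypotheses are matched, namely homogeneous \(A\), generic \(\bw\), and ordered \(\cN\) with \(I_{\bzero}\in\cN\subseteq\cN_{\bv}\), exactly as you plan. Your split of the equality into the easy inclusion (ambient series are solutions supported on shifts with \(I_{\bu}\in\cN\)) and the substantive reverse inclusion cited from \cite{OS25} is a harmless elaboration of that citation.
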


In other words, the ambient coefficient space and the full coefficient space at \(\bv\) for \(\cN\) are equal.
By \cite[Proposition~6.2]{OS25}, the intrinsic coefficient space at \(\bv\) for \(\cN\) lies in the ambient coefficient space, and equality of the two holds if and only if \(\Phi\bigl(Q_{\cN}(\bt):e\bigr) = \Pi_{\cN}:m_{\bv,\cN}\).
Following \cite{Nak26}, put
\begin{equation}
	\fD_{\cN}(e)
	=
	\frac{(U+P_{\cN}(\bt)):e}
	{(UM_{\cN}+P_{\cN}(\bt)):e}.
	\label{eq:defect-module}
\end{equation}
We impose no finite-length hypothesis on \(\fD_{\cN}(e)\), and we write \(\dim_{\C}\fD_{\cN}(e)\) for the dimension of this module when it has finite length.

\subsection{A finite presentation of the two colon ideals}
\label{subsec:finite-boundary-presentation}

The presentation recalled here replaces the complete local rings by the polynomial rings \(R_0=\C[t_1,\ldots,t_n]\) and \(S_0=\C[s_1,\ldots,s_r]\), and we put \(\fm=\langle s_1,\ldots,s_r\rangle\).
Put \(\theta_i=\sum_{j=1}^n a_{ij}t_j\) for \(1\leq i\leq d\), and define
\[
	U_0=\langle\theta_1,\ldots,\theta_d\rangle,
	\qquad
	\Phi_0:R_0\longrightarrow S_0,
	\qquad
	t_j\longmapsto\ell_j=(B\bs)_j.
\]
It follows that \(\ker\Phi_0=U_0\).
Let \(M_{\cN}^0\) and \(P_{\cN}^0(\bt)\) be the monomial ideals of \(R_0\) generated by the monomials in \eqref{eq:obstruction-M} and \eqref{eq:obstruction-P}, and put \(Q_{\cN}^0(\bt)=U_0M_{\cN}^0+P_{\cN}^0(\bt)\).
For \(Y\subseteq\{1,\ldots,n\}\), write \(\ell^Y=\prod_{j\in Y}\ell_j\), with \(\ell^{\varnothing}=1\).

Below we use the antichain
\begin{equation}
	\cH_{\cN}
	=
	\min_{\subseteq}
	\{(I\cup J)\setminus K_{\cN}
	\mid I\in\cN,\
	J\in\sS(\bv)\setminus\cN\}
	=\{H_1,\ldots,H_h\},
	\label{eq:boundary-H-antichain}
\end{equation}
which is empty when \(\sS(\bv)=\cN\).
Put \(E=I_{\bzero}\setminus K_{\cN}\).
Each \(j\in E\) lies in some \(I\in\cN\) and outside some \(I'\in\cN\), so the \(j\)-th row of \(B\) and the linear form \(\ell_j\) are nonzero.
Hence \(\ell^E\ne0\).

Following \cite{Nak26}, define two homogeneous ideals of \(S_0\) by
\begin{equation}
	J_{\cN}^{\amb}=\Phi_0\bigl(Q_{\cN}^0(\bt):e\bigr), \qquad J_{\cN}^{\intr}=\Phi_0\bigl(P_{\cN}^0(\bt)\bigr):\Phi_0(e).
	\label{eq:boundary-comparison-ideals}
\end{equation}

\begin{fact}
	\label{fact:finite-presentation}
	Let \(\bv\) be a fake exponent and let \(\cN\) be a negative support family for \(\bv\).
	By Theorem~3.7 of \cite{Nak26}, the two ideals in \eqref{eq:boundary-comparison-ideals} satisfy
	\begin{equation}
		J_{\cN}^{\amb}
		\subseteq
		J_{\cN}^{\intr},
		\label{eq:boundary-annihilator-colon}
	\end{equation}
	their extensions to \(\Shat\) are \(\Phi(Q_{\cN}(\bt):e)\) and \(\Pi_{\cN}:m_{\bv,\cN}\), respectively, and there is an isomorphism
	\begin{equation}
		\fD_{\cN}(e)
		\simeq
		\Shat\otimes_{S_0}
		\frac{J_{\cN}^{\intr}}{J_{\cN}^{\amb}}.
		\label{eq:boundary-classification-isomorphism}
	\end{equation}
\end{fact}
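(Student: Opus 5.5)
The plan is to reduce everything to one observation: the ideal \(U=\ker\Phi\) is contained in both colon ideals. Once that holds, the quotient \(\fD_{\cN}(e)\) passes intact to \(\Shat=\Rhat/U\). After that, only flatness of completions is needed to compare with the polynomial presentation.

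\emph{Step 1 (containments).} Since \(I_{\bzero}\in\cN\), the generator \(\bt^{I_{\bzero}\setminus K_{\cN}}=e\) of \eqref{eq:obstruction-M} lies in \(M_{\cN}\). Hence \(Ue\subseteq UM_{\cN}\subseteq Q_{\cN}(\bt)\), which gives \(U\subseteq Q_{\cN}(\bt):e\). Also \(Q_{\cN}(\bt)\subseteq U+P_{\cN}(\bt)\), so
\[
U\subseteq Q_{\cN}(\bt):e\subseteq (U+P_{\cN}(\bt)):e.
\]
The same argument in \(R_0\) gives \(U_0\subseteq Q^0_{\cN}(\bt):e\subseteq (U_0+P^0_{\cN}(\bt)):e\).

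\emph{Step 2 (passing to the quotient).} Because \(\Phi\) is surjective with kernel \(U\), ideals of \(\Rhat\) containing \(U\) correspond bijectively to ideals of \(\Shat\), and this correspondence preserves quotients. By Step 1,
\[
\fD_{\cN}(e)\simeq \Phi\bigl((U+P_{\cN}(\bt)):e\bigr)\big/\Phi\bigl(Q_{\cN}(\bt):e\bigr).
\]
Colons commute with passage to \(\Rhat/U\), so \(\Phi((U+P_{\cN}(\bt)):e)=\Pi_{\cN}:m_{\bv,\cN}\). The same reasoning over \(R_0\) and \(\Phi_0\) gives \(\Phi_0((U_0+P^0_{\cN}):e)=\Phi_0(P^0_{\cN}):\Phi_0(e)=J^{\intr}_{\cN}\). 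Applying \(\Phi_0\) to the second containment of Step 1 then yields \(J^{\amb}_{\cN}\subseteq J^{\intr}_{\cN}\), which is \eqref{eq:boundary-annihilator-colon}.

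\emph{Step 3 (completion).} This is the step I expect to need the most care. All ideals involved are homogeneous, because \(U_0\) is generated by linear forms and the remaining generators are monomials. The map \(R_0\to\Rhat\) is completion at the origin, hence flat, and flat extension commutes with sums, products and colon ideals by finitely generated ideals. Therefore \(Q^0_{\cN}:e\) and \((U_0+P^0_{\cN}):e\) extend to \(Q_{\cN}:e\) and \((U+P_{\cN}):e\).

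The map \(\Phi\) is the completion of the graded surjection \(\Phi_0\), so \(\Shat=\Rhat\otimes_{R_0}S_0\). Extension of an ideal followed by \(\Phi\) therefore equals \(\Phi_0\) followed by extension to \(\Shat\). This identifies the extensions of \(J^{\amb}_{\cN}\) and \(J^{\intr}_{\cN}\) with \(\Phi(Q_{\cN}(\bt):e)\) and \(\Pi_{\cN}:m_{\bv,\cN}\).

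Finally, \(\Shat\) is flat over \(S_0\), so tensoring the exact sequence
\[
0\to J^{\amb}_{\cN}\to J^{\intr}_{\cN}\to J^{\intr}_{\cN}/J^{\amb}_{\cN}\to 0
\]
with \(\Shat\) preserves exactness. Combined with Step 2, this gives \eqref{eq:boundary-classification-isomorphism}.
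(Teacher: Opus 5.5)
Your proof is correct and complete. There is nothing in the paper to compare it with: the statement is imported as a Fact from Theorem~3.7 of \cite{Nak26} and is not proved here. Your proposal therefore serves as a self-contained verification.

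What it shows is that all three assertions rest on one structural point. Because \(I_{\bzero}\in\cN\), the generator \(e\) lies in \(M_{\cN}\), and this gives
\(U\subseteq Q_{\cN}(\bt):e\subseteq(U+P_{\cN}(\bt)):e\).
The rest follows from the correspondence theorem for the surjection \(\Phi\) and from flatness of \(R_0\to\Rhat\) and \(S_0\to\Shat\). Your argument uses only \(I_{\bzero}\in\cN\), never orderedness. This agrees with the paper's remark that orderedness enters only through the coefficient-space interpretation.

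Two small points deserve a word.

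First, you transport the colon through \(\Phi_0\), that is, you use
\(\Phi_0\bigl((U_0+P_{\cN}^0(\bt)):e\bigr)=\Phi_0(P_{\cN}^0(\bt)):\Phi_0(e)\).
This needs \(\Phi_0\) to be surjective, not only \(\ker\Phi_0=U_0\); surjectivity is also what makes \(\Phi_0(P^0_{\cN}(\bt))\) an ideal of \(S_0\). It holds because the columns of \(B\) are linearly independent. Hence the rows of \(B\) span \(\C^r\), and the forms \(\ell_j\) span all linear forms in \(\bs\).

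Second, you do not need the identification \(\Shat=\Rhat\otimes_{R_0}S_0\) to compare extensions. Since \(\Phi\) restricts to \(\Phi_0\) on \(R_0\) and is surjective, \(\Phi(I_0\Rhat)=\Phi_0(I_0)\Shat\) holds directly for every ideal \(I_0\subseteq R_0\).

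With these remarks, your argument establishes the following three assertions as stated, with \(\fD_{\cN}(e)\) naturally an \(\Shat\)-module because \(U\) annihilates it:
\(J_{\cN}^{\amb}\subseteq J_{\cN}^{\intr}\), the two extension identities, and \(\fD_{\cN}(e)\simeq\Shat\otimes_{S_0}J_{\cN}^{\intr}/J_{\cN}^{\amb}\).
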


The quotient \(J_{\cN}^{\intr}/J_{\cN}^{\amb}\) in \eqref{eq:boundary-classification-isomorphism} is a graded \(S_0\)-module, and \(\fD_{\cN}(e)\) is its completion; we do not regard the completed module as a direct-sum graded module.

Each homogeneous subspace \(W\subseteq S_{0,i}\) has an orthogonal complement \(W^\perp\subseteq\C[\bpartial_{\bs}]_i\) with respect to \(\langle\cdot,\cdot\rangle_{\bs}\).
Put \(V_{\cN,i}^{\amb} = (J_{\cN}^{\amb})_i^\perp\) and \(V_{\cN,i}^{\intr} = (J_{\cN}^{\intr})_i^\perp\).

\begin{fact}
	\label{fact:defect-coefficient-duality}
	Let \(\bv\) and \(\cN\) be as in Fact~\ref{fact:finite-presentation}, and let \(i\geq0\).
	By Proposition~3.8 of \cite{Nak26}, there is an inclusion \(V_{\cN,i}^{\intr} \subseteq V_{\cN,i}^{\amb}\), and \(\langle\cdot,\cdot\rangle_{\bs}\) induces a perfect pairing
	\begin{equation}
		\left(
		\frac{J_{\cN}^{\intr}}
		{J_{\cN}^{\amb}}
		\right)_i
		\times
		\frac{V_{\cN,i}^{\amb}}
		{V_{\cN,i}^{\intr}}
		\longrightarrow\C.
		\label{eq:defect-coefficient-perfect-pairing}
	\end{equation}
	If \(\cN\) is ordered, Theorem~5.5 and Proposition~6.2 of \cite{OS25} identify \(V_{\cN,i}^{\amb}\) and \(V_{\cN,i}^{\intr}\) with the degree-\(i\) ambient and intrinsic coefficient spaces at \(\bv\), respectively.
\end{fact}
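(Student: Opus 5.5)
The plan is to reduce the statement to finite-dimensional linear algebra, degree by degree. First, I would check that both ideals in \eqref{eq:boundary-comparison-ideals} are homogeneous. The ideal \(U_0\) is generated by linear forms and \(M_{\cN}^0\), \(P_{\cN}^0(\bt)\), \(e\) are monomials, so \(Q_{\cN}^0(\bt)\) is homogeneous, and so is its colon by the monomial \(e\). Since \(\Phi_0\) sends each \(t_j\) to a linear form, it is a graded surjection, and the image of a homogeneous ideal is homogeneous. Likewise \(\Phi_0(P_{\cN}^0(\bt))\) is homogeneous, and its colon by the homogeneous element \(\ell^E=\Phi_0(e)\ne0\) is homogeneous. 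Hence \(J_{\cN}^{\amb}\) and \(J_{\cN}^{\intr}\) are graded, and the inclusion \eqref{eq:boundary-annihilator-colon} of Fact~\ref{fact:finite-presentation} holds in each degree: \((J_{\cN}^{\amb})_i\subseteq(J_{\cN}^{\intr})_i\subseteq S_{0,i}\).

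Second, I would show that \(\langle\cdot,\cdot\rangle_{\bs}\) restricts to a perfect pairing \(S_{0,i}\times\C[\bpartial_{\bs}]_i\to\C\). In the monomial bases this is immediate, since \(\langle \bs^{\ba},\bpartial_{\bs}^{\bb}\rangle_{\bs}=\ba!\,\delta_{\ba\bb}\) when \(|\ba|=|\bb|=i\). Taking orthogonal complements reverses inclusions, so the degree-\(i\) inclusion above gives \(V_{\cN,i}^{\intr}\subseteq V_{\cN,i}^{\amb}\).

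Third, I would apply a general lemma. Let \(X\times Y\to\C\) be a perfect pairing of finite-dimensional spaces and let \(W_1\subseteq W_2\subseteq X\). Then the pairing induces a perfect pairing
\[
W_2/W_1\times W_1^\perp/W_2^\perp\to\C.
\]
It is well defined because \(W_1\) pairs to zero with \(W_1^\perp\), and \(W_2\) pairs to zero with \(W_2^\perp\). For nondegeneracy on the left, suppose \(f\in W_2\) pairs to zero with all of \(W_1^\perp\). Then \(f\in(W_1^\perp)^\perp=W_1\). Since \(\dim W_1^\perp/W_2^\perp=\dim W_2-\dim W_1\), the pairing is perfect. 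Applying this with \(W_1=(J_{\cN}^{\amb})_i\) and \(W_2=(J_{\cN}^{\intr})_i\) gives \eqref{eq:defect-coefficient-perfect-pairing}.

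Finally, for ordered \(\cN\), I would match these inverse-system pieces with the coefficient spaces. By \cite[Theorem~5.5]{OS25} the ambient coefficient space consists of coefficients extracted by operators \(q(\bpartial_{\bs})\) annihilating \(\Phi(Q_{\cN}(\bt):e)\). By \cite[Proposition~6.2]{OS25} the intrinsic space corresponds to \(q\) annihilating \(\Pi_{\cN}:m_{\bv,\cN}\). By Fact~\ref{fact:finite-presentation} these two ideals are the extensions of \(J_{\cN}^{\amb}\) and \(J_{\cN}^{\intr}\) to \(\Shat\). A homogeneous \(q\) of degree \(i\) annihilates an extended homogeneous ideal exactly when it annihilates its degree-\(i\) part, because \(q\) kills every homogeneous component of degree other than \(i\) after evaluation at \(\bs=\bzero\). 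I expect this last step to be the main obstacle. It requires checking that the grading by \(i\) is compatible with the grading of the coefficient spaces by logarithmic degree in the constructions of \cite{OS25}, so that the degree-\(i\) pieces correspond exactly. I would verify this by tracking how \(q(\bpartial_{\bs})\) applied to \(\bx^{B\bs}\) produces logarithms of total degree \(i\).
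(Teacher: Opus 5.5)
Your proposal is correct and follows the route the paper relies on. The paper gives no argument of its own: it cites \cite[Proposition~3.8]{Nak26} for the inclusion \(V_{\cN,i}^{\intr}\subseteq V_{\cN,i}^{\amb}\) and the perfect pairing, which is the degreewise duality \(W_2/W_1\times W_1^\perp/W_2^\perp\to\C\) you reconstruct from \(J_{\cN}^{\amb}\subseteq J_{\cN}^{\intr}\) (an inclusion that also follows directly from \(\Phi_0(U_0)=0\)), and it cites \cite[Theorem~5.5 and Proposition~6.2]{OS25} for the identification with coefficient spaces when \(\cN\) is ordered. The degree matching you flag as the main obstacle is not proved in the paper either; it is built into the paper's convention, stated in the subsection on formal series solutions, that \(V_{\bv,\cN}^{\amb}\) and \(V_{\bv,\cN}^{\intr}\) are identified with logarithmic coefficient spaces through the injective lattice-coordinate substitution of \cite[Sections~5 and~6]{OS25}, under which a degree-\(i\) operator yields a logarithmic polynomial of degree \(i\).
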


\subsection{Formal series solutions}

We now allow the fake exponent and the ordered negative support family to vary.
When the fake exponent must be displayed in the notation, write \(J_{\bv,\cN}^{\amb}\), \(J_{\bv,\cN}^{\intr}\), \(V_{\bv,\cN,i}^{\amb}\), and \(V_{\bv,\cN,i}^{\intr}\) for the objects defined above.
Define \(V_{\bv,\cN}^{\amb}\) and \(V_{\bv,\cN}^{\intr}\) by
\[
	V_{\bv,\cN}^{\amb}
	=
	\bigoplus_{i\geq0}V_{\bv,\cN,i}^{\amb},
	\qquad
	V_{\bv,\cN}^{\intr}
	=
	\bigoplus_{i\geq0}V_{\bv,\cN,i}^{\intr}.
\]
We identify these spaces with the corresponding spaces of logarithmic coefficients by the injective substitution in lattice coordinates given in \cite[Sections~5 and~6]{OS25}.

For a fake exponent \(\bv\), consider the canonical series in the direction \(\bw\) based at \(\bv\).
Define \(\cE_{\bv}\) to be the graded space of the logarithmic coefficients of \(\bx^{\bv}\) in those series, written in lattice coordinates.
By \cite[Theorem~4.9]{OS25}, the space \(\cE_{\bv}\) is the finite-dimensional inverse system of the component supported at \(\bv\) of the indicial ideal \(\operatorname{ind}_{\bw}(H_A(\bbeta))\).
Here the inverse system is written in the lattice coordinates supplied by \(B\).
Let \(\cE_{\bv}^{\perp}\) be the orthogonal complement of \(\cE_{\bv}\) in \(S_0\) under \(\langle\cdot,\cdot\rangle_{\bs}\).
For each \(i\), the factors of the distraction of \(\bpartial^{\bg^{(i)}_+}\) that vanish at \(\bv\) are exactly the factors indexed by \(G^{(i)}(\bv)\).
The remaining factors are units in the local ring \(S_{0,\fm}\).
Thus \(P_B(\bv)\) is the corresponding shifted local component of the fake indicial ideal; see \cite[Section~3.2]{SST00}.
The defining inclusion of the fake indicial ideal in the indicial ideal gives
\begin{equation}
	P_B(\bv)
	\subseteq
	\cE_{\bv}^{\perp},
	\qquad
	\cE_{\bv}
	\subseteq
	P_B(\bv)^\perp.
	\label{eq:actual-fake-local-inclusions}
\end{equation}

We use the following fact about the construction of canonical series.

\begin{fact}
	\label{fact:canonical-series-construction}
	Let \(A\) be homogeneous, let \(\bbeta\in\C^d\), let \(\bw\) be generic, fix a monomial order refining the \(\bw\)-weight, and let \(N\) be the corresponding space of formal series in \cite[Section~2.5]{SST00}.
	The discussion following \cite[Algorithm~2.5.8]{SST00} states that the indicial ideal \(\operatorname{ind}_{\bw}(H_A(\bbeta))\) is a Frobenius ideal of finite rank equal to \(\rank(H_A(\bbeta))\) and that its zeros are the exponents of \(H_A(\bbeta)\) with respect to \(\bw\).
	The same discussion and the proof of \cite[Lemma~2.5.10]{SST00} show that this indicial ideal and \(\operatorname{in}_{(-\bw,\bw)}(H_A(\bbeta))\) have the same solutions in \(N\).
	By \cite[Theorem~2.3.11]{SST00}, these common solutions are finite sums of terms \(\bx^{\bv}q(\log\bx)\), where \(\bv\) is an exponent and \(q\) belongs to the local inverse system of the indicial ideal at \(\bv\).
	By \cite[Lemma~2.5.9, Corollary~2.5.11, and Theorem~2.5.12]{SST00}, there are exactly \(\rank(H_A(\bbeta))\) starting monomials with respect to the fixed order, the number of starting monomials with a fixed exponent equals the multiplicity of that exponent, and each starting monomial \(\fs\) determines a unique canonical series \(\Xi_{\fs}\in N\) whose coefficient at \(\fs\) is one and in which no other starting monomial occurs.
	Consequently, the series \(\Xi_{\fs}\) form a basis of the solutions of \(H_A(\bbeta)\) in \(N\), and their initial series form a basis of the solutions of the initial system in \(N\).
\end{fact}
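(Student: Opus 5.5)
The plan is to assemble this Fact from the cited results of \cite{SST00}, checking in each step that their hypotheses hold for our homogeneous \(A\) and generic \(\bw\); no new argument should be needed. First I will fix the monomial order \(\prec\) refining the \(\bw\)-weight and the space \(N\) of \cite[Section~2.5]{SST00}. Genericity of \(\bw\) in the sense of Section~\ref{sec:fake-exponents} means that \(\bw\) lies in the interior of a full-dimensional cone of the small Gr\"obner fan of \(H_A(\bbeta)\). Hence \(\operatorname{in}_{(-\bw,\bw)}(H_A(\bbeta))\) is the constant initial ideal on that cone, and the indicial ideal \(\operatorname{ind}_{\bw}(H_A(\bbeta))\) is defined. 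Homogeneity of \(A\) makes \(H_A(\bbeta)\) regular holonomic, so the discussion after \cite[Algorithm~2.5.8]{SST00} applies. That discussion gives that the indicial ideal is a Frobenius ideal of rank \(\rank(H_A(\bbeta))\), and that its zeros are the exponents.

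Second, I will identify the solutions in \(N\) of the initial system with those of the indicial ideal. The key point is that \(\operatorname{in}_{(-\bw,\bw)}(H_A(\bbeta))\) and its torus-fixed part generate the same holonomic rank on the open torus. Then \cite[Lemma~2.5.10]{SST00} shows that any \(\bw\)-initial solution \(\bx^{\bv}q(\log\bx)\) is annihilated by the distraction. Applying \cite[Theorem~2.3.11]{SST00} to the Frobenius ideal then yields the description of the common solutions as finite sums of \(\bx^{\bv}q(\log\bx)\), with \(q\) in the local inverse system at \(\bv\).

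Third, I will introduce the starting monomials as the \(\prec\)-initial monomials of these initial solutions. By \cite[Lemma~2.5.9]{SST00}, their number equals the dimension of the initial solution space, namely \(\rank(H_A(\bbeta))\). Counting at a fixed exponent, the number equals the length of the local component, which is the multiplicity \cite[Corollary~2.5.11]{SST00}. Then \cite[Theorem~2.5.12]{SST00} extends each initial solution to a solution in \(N\). After reducing by triangular elimination with respect to \(\prec\), each starting monomial \(\fs\) yields a unique \(\Xi_{\fs}\) with coefficient one at \(\fs\) and no other starting monomial.

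The basis statement then follows by triangularity. The \(\Xi_{\fs}\) are linearly independent because their leading starting monomials are distinct. There are \(\rank(H_A(\bbeta))\) of them, and the solution space in \(N\) has at most that dimension. The same triangularity applied to their initial series gives the basis of initial solutions. I expect the main obstacle to be the uniqueness normalization, that is, ensuring that subtracting multiples of other canonical series to remove a starting monomial does not reintroduce one. I would handle this by a descending induction along \(\prec\), using that only finitely many starting monomials exist.
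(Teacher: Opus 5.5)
Your proposal is correct and follows essentially the paper's route. The Fact is itself only this chain of citations to \cite{SST00} (the discussion after Algorithm~2.5.8, Lemma~2.5.10, Theorem~2.3.11, Lemma~2.5.9, Corollary~2.5.11 and Theorem~2.5.12), and you deduce the two basis assertions by the same triangularity; note, though, that the step you flag as the main obstacle is immediate (each \(\Xi_{\fs'}\) contains no starting monomial other than \(\fs'\)), whereas the real input behind uniqueness and behind your unproved bound on the dimension of the solution space in \(N\), which you should state and cite, is that a nonzero solution in \(N\) has a nonzero \(\bw\)-initial series solving \(\operatorname{in}_{(-\bw,\bw)}(H_A(\bbeta))\), so its \(\prec\)-initial monomial is a starting monomial.
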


By the description of \(\cE_{\bv}\) as an inverse system and Fact~\ref{fact:canonical-series-construction}, a fake exponent \(\bv\) is an exponent of \(H_A(\bbeta)\) with respect to \(\bw\) in the sense of \cite{Sai20} if and only if \(\cE_{\bv}\ne0\).
Under the standing homogeneity assumption, these are also the exponents in the sense of \cite[Definition~2.9]{DMM12}.
Let \(\sE_{\bbeta,\bw}\) be this finite set of exponents.
For \(\bv\in\sE_{\bbeta,\bw}\), define \(\sO(\bv)\) to be the set of all ordered negative support families for \(\bv\).
The set \(\sO(\bv)\) is finite because the members of \(\sS(\bv)\) are subsets of \(\{1,\ldots,n\}\).
Define \(\cN_{\bv}^{\ord}\) by
\begin{equation}
	\cN_{\bv}^{\ord}
	=
	\bigcup_{\cN\in\sO(\bv)}\cN.
	\label{eq:largest-ordered-family}
\end{equation}

The following normalization is adapted from \cite[Lemmas~4.1 and~4.2, and Proposition~4.3]{OS25}.
We record the following extension: the coefficient transport applies to a canonical series without fixing an ordered negative support family.

\begin{lemma}
	\label{lem:coefficient-transport}
	Let \(\bv\) be a fake exponent, and let \(\by=(y_1,\ldots,y_n)\) denote variables representing \(\log\bx\).
	Let \(\phi(\bx)=\sum_{\bu\in L}\bx^{\bv+\bu}r_{\bu}(\log\bx)\) be a canonical series in the direction \(\bw\) based at \(\bv\), where \(r_{\bu}\in\C[\by]\) and \(r_{\bu}=0\) when the shift \(\bu\) does not occur.
	There are normalized coefficients \(c_{\bu}\in\C[\by]\), for \(\bu\in L\), such that \(c_{\bu}\) depends only on \(I_{\bu}\) and, writing \(c_I=c_{\bu}\) for \(I=I_{\bu}\), we have
	\begin{equation}
		\bpartial_{\by}^{J\setminus I}c_I
		-
		\bpartial_{\by}^{I\setminus J}c_J
		=0
		\qquad
		(I,J\in\sS(\bv)).
		\label{eq:coefficient-transport}
	\end{equation}
\end{lemma}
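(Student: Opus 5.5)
The plan is to follow the normalization of \cite[Lemmas~4.1 and~4.2, Proposition~4.3]{OS25}, applied to the recursion satisfied by the coefficients of an arbitrary canonical series. The ordered family \(\cN\) is only needed there to control which shifts occur, and the recursion itself is local in \(\bu\).

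\textbf{Step 1: the recursion.} Write \(\phi\) in the variables \(\by=\log\bx\). Applying the Euler operators forces \(A\,\bpartial_{\by}r_{\bu}=0\), so each \(r_{\bu}\) is a function of the lattice coordinates only. Applying a toric binomial \(\bpartial^{\bu_+}-\bpartial^{\bu_-}\) for \(\bu\in L\) gives the two-term relation
\[
	[\bv+\bu'+\bpartial_{\by}]_{\bu_+}\,r_{\bu'+\bu}\cdots
	=[\bv+\bu'+\bpartial_{\by}]_{\bu_-}\,r_{\bu'}\cdots,
\]
up to the standard shift of the falling factorials, as in \cite{OS25}. Here \(x_j\partial_j\) acts on \(\bx^{\bv+\bu'}r(\log\bx)\) as \(v_j+u'_j+\partial_{y_j}\).

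\textbf{Step 2: separate the factors.} In each falling factorial I split the factors into two kinds. The factors \((z_j-k)\) with \(v_j+u'_j-k\neq0\) act as invertible operators on \(\C[\by]\). Exactly one factor per index has \(v_j+u'_j-k=0\), namely when \(j\) passes from a nonnegative to a negative integer value. That factor acts as \(\partial_{y_j}\), and this happens precisely for \(j\) in the symmetric difference of \(I_{\bu'}\) and \(I_{\bu'+\bu}\), on the appropriate side. Define the normalized coefficient \(c_{\bu}\) by applying to \(r_{\bu}\) the inverse of the product of all the unit factors accumulated along \(\bv\to\bv+\bu\). Concretely,
\[
	c_{\bu}=\Bigl(\prod_j\prod_k (\text{nonvanishing factor})\Bigr)^{-1}r_{\bu}
\]
is a normalization by falling factorials of \(v_j+\partial_{y_j}\), in the style of \(1/[\bv+\bu+\bpartial]\). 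It is well defined because these operators are invertible on polynomials, being a nonzero constant plus a nilpotent.

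\textbf{Step 3: the transport identity.} With this normalization, the relation for \(\bu\) connecting \(\bu'\) to \(\bu'+\bu\) reduces to \eqref{eq:coefficient-transport}, namely \(\bpartial_{\by}^{J\setminus I}c_{I}=\bpartial_{\by}^{I\setminus J}c_{J}\) with \(I=I_{\bu'}\) and \(J=I_{\bu'+\bu}\). The binomials of \(I_A\) connect any two elements of \(L\), so this holds for all pairs.

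\textbf{Step 4: dependence only on the support.} Taking \(I_{\bu'}=I_{\bu'+\bu}\) gives \(c_{\bu'}=c_{\bu'+\bu}\), so \(c_{\bu}\) depends only on \(I_{\bu}\).

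The identity must also cover \(\bu\) with \(r_{\bu}=0\). If \(I_{\bu}\) equals the support of some occurring shift, Step 4 forces \(c_{\bu}\) to agree with that shift's value. So I set \(c_I\) to be the common value when \(I\) is attained by an occurring shift, and \(0\) otherwise.

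\textbf{Main obstacle.} The hardest step is checking consistency when \(r_{\bu}=0\) for a shift \(\bu\) whose support \(I_{\bu}\) also occurs at a nonzero shift. In \cite{OS25} the ordered family guarantees that the relevant coefficients vanish. Here I would argue instead from the canonical-series property: such a \(\bu\) has \(I_{\bu}\) outside the range of supports reached with nonzero coefficient. Failing that, I would use the relation itself, which shows the normalized coefficients are equal whenever no factor vanishes, so a nonzero coefficient forces a nonzero one.
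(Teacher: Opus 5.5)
Your Steps 1--4 follow the paper's route: the two-term relation of \cite[Lemma~4.1]{OS25} for every pair of shifts, a unit part times \(\bpartial_{\by}^{I_{\bu'}\setminus I_{\bu}}\), normalization by unit parts, and support dependence when the monomial operators are \(1\). Two steps, however, are not right as written.

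\textbf{The normalization.} Read literally, a single inverse of a product of nonvanishing factors ``in the style of \(1/[\bv+\bu+\bpartial]\)'' does not give \eqref{eq:coefficient-transport}. The relation between \(\bzero\) and \(\bu\) is \(\widetilde d_{\bzero\leftarrow\bu}\bpartial_{\by}^{I_{\bzero}\setminus I_{\bu}}r_{\bu}=\widetilde d_{\bu\leftarrow\bzero}\bpartial_{\by}^{I_{\bu}\setminus I_{\bzero}}r_{\bzero}\). Here \(\widetilde d_{\bzero\leftarrow\bu}\) and \(\widetilde d_{\bu\leftarrow\bzero}\) are the unit parts of \([\bv+\bu+\bpartial_{\by}]_{\bu_+}\) and \([\bv+\bpartial_{\by}]_{\bu_-}\), respectively. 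So the normalization must be the ratio \(c_{\bu}=(\widetilde d_{\bu\leftarrow\bzero})^{-1}\widetilde d_{\bzero\leftarrow\bu}r_{\bu}\). It \emph{multiplies} by the unit part of the factorial at \(\bv+\bu\); a one-sided normalization leaves a stray unit operator in the relation.

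Step 3 also does not follow by inspection. To pass from this base-point normalization to the relation between arbitrary shifts \(\bu'\) and \(\bu''\), you need \(\widetilde d_{\bu'\leftarrow\bu''}\widetilde d_{\bu''\leftarrow\bzero}\widetilde d_{\bzero\leftarrow\bu'}=\widetilde d_{\bu''\leftarrow\bu'}\widetilde d_{\bu'\leftarrow\bzero}\widetilde d_{\bzero\leftarrow\bu''}\). This is the cocycle identity the paper takes from \cite[Lemma~4.2]{OS25}, and it is true for the following reasons:
\begin{itemize}
\item In each coordinate \(\nu\), both sides of the same identity for the full operators equal the product of the factors \(\partial_{y_\nu}+v_\nu+k\) over the integers \(k\) with \(\min\{0,u'_\nu,u''_\nu\}<k\leq\max\{0,u'_\nu,u''_\nu\}\).
\item The monomial parts satisfy the same identity.
\item \(\C[\bpartial_{\by}]\) is a domain, so the monomial parts can be cancelled.
\end{itemize}

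\textbf{The ``main obstacle''.} It does not exist, and you should not redefine \(c_I\) at absent shifts. Put \(\bg=\bu-\bu'\). The relation between \(\bu\) and \(\bu'\) is the coefficient of the single monomial \(\bx^{\bv+\bu-\bg_+}=\bx^{\bv+\bu'-\bg_-}\) in \((\bpartial^{\bg_+}-\bpartial^{\bg_-})\phi=0\). Only the shift \(\bu\) contributes through \(\bpartial^{\bg_+}\), and only \(\bu'\) through \(\bpartial^{\bg_-}\). Hence the relation holds for every pair, with \(r=0\) at absent shifts. The normalized coefficient at an absent shift is therefore determined, namely \(0\); it is not something you may set.

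Since the normalizing operators are invertible, \(c_{\bu}=0\) if and only if \(r_{\bu}=0\), as the paper records. Step 4 applied to a pair with \(I_{\bu}=I_{\bu'}\) then shows that the two shifts are both present or both absent, so the case you worry about never occurs. The equivalence \(c_{\bu}=0\Leftrightarrow r_{\bu}=0\) is exactly what Lemma~\ref{lem:largest-ordered-family} uses: an excluded shift \(\bu'\) with \(\bw\cdot\bu'<0\) has normalized coefficient \(0\), which forces \(c_{I_{\bu'}}=0\). Replacing absent values by ``the common value'' of present ones would break this if it ever changed anything.

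Your first suggested argument (that such a \(\bu\) has \(I_{\bu}\) outside the supports reached with nonzero coefficient) contradicts the hypothesis of the case it is meant to handle. Neither orderedness nor the support condition for canonical series is needed. Your fallback sentence is the correct argument, and it is just Step 4.
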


\begin{proof}
	For \(\bu,\bu'\in L\), define \(d_{\bu'\leftarrow\bu}\) by \(d_{\bu'\leftarrow\bu} = \prod_{\nu=1}^{n} \prod_{\mu=1}^{u_{\nu}-u'_{\nu}} \left(\partial_{y_{\nu}}+v_{\nu}+u_{\nu}-\mu+1\right)\), where the product over \(\mu\) is \(1\) when \(u_{\nu}-u'_{\nu}\leq0\).
	With the convention that the coefficient of an absent shift is zero, \cite[Lemma~4.1]{OS25} gives
	\begin{equation}
		d_{\bu'\leftarrow\bu}r_{\bu}
		-
		d_{\bu\leftarrow\bu'}r_{\bu'}
		=0.
		\label{eq:coefficient-transport-unnormalized}
	\end{equation}
	Coordinatewise factorization gives a unique polynomial differential operator \(\widetilde d_{\bu'\leftarrow\bu}\in\C[\bpartial_{\by}]\) with a nonzero constant term such that
	\begin{equation}
		d_{\bu'\leftarrow\bu}
		=
		\widetilde d_{\bu'\leftarrow\bu}
		\bpartial_{\by}^{I_{\bu'}\setminus I_{\bu}}.
		\label{eq:coefficient-transport-factorization}
	\end{equation}
	By \cite[Lemma~4.2]{OS25}, this factorization exists and is unique, and \(\widetilde d_{\bu'\leftarrow\bu}\) is an automorphism of \(\C[\by]\).
	The inverse of \(\widetilde d_{\bu'\leftarrow\bu}\) terminates on each polynomial; no inverse on a larger formal or analytic coefficient space is used here.
	Define the normalized coefficient \(c_{\bu}\) by
	\begin{equation}
		c_{\bu}
		=
		\left(\widetilde d_{\bu\leftarrow\bzero}\right)^{-1}
		\widetilde d_{\bzero\leftarrow\bu}r_{\bu}.
		\label{eq:coefficient-transport-normalization}
	\end{equation}
	Thus \(c_{\bu}=0\) if and only if \(r_{\bu}=0\); this equivalence holds for every shift \(\bu\), including the shifts added by zero extension.
	The cocycle identity for the operators \(\widetilde d\) is \cite[Lemma~4.2]{OS25}, and \cite[Proposition~4.3]{OS25} deduces \eqref{eq:coefficient-transport} from it.
	If \(I_{\bu}=I_{\bu'}\), both monomial differential operators are \(1\), and hence \(c_{\bu}=c_{\bu'}\).
	The normalized coefficient therefore depends only on the negative support.
	No assumption that a negative support family is ordered has been used in the coefficient comparison, the normalization, or the transport argument.
\end{proof}

\begin{lemma}
	\label{lem:largest-ordered-family}
	For every \(\bv\in\sE_{\bbeta,\bw}\), the set \(\sO(\bv)\) is nonempty, and \(\cN_{\bv}^{\ord}\) is the largest ordered negative support family for \(\bv\).
	Moreover,
	\begin{equation}
		V_{\bv,\cN_{\bv}^{\ord}}^{\amb}
		=
		\cE_{\bv},
		\qquad
		J_{\bv,\cN_{\bv}^{\ord}}^{\amb}
		=
		\cE_{\bv}^{\perp}.
		\label{eq:largest-ordered-family-full-inverse-system}
	\end{equation}
\end{lemma}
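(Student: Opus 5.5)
The plan has three parts: show that the ordered families are closed under union and contain a minimal one, identify the ambient space of the union with \(\cE_{\bv}\) through canonical series, and then dualize.

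\textbf{Nonemptiness and the largest family.} First I will show that \(\sO(\bv)\) is nonempty by exhibiting the family
\[
\cN_0=\{J\in\sS(\bv)\mid J\subseteq I_{\bzero}\}.
\]
This family contains \(I_{\bzero}\) and is closed downward within \(\sS(\bv)\) by construction. So the only thing to check is \(\cN_0\subseteq\cN_{\bv}\): for \(J\subseteq I_{\bzero}\) with \(J\in\sS(\bv)\), I must show \(\bw\cdot\bu\geq0\) on \(\cF_J\).

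For this I would use the standard-pair description of fake exponents, Corollary~3.2.3 of \cite{SST00}, as in the discussion preceding \cite[Definition~3.1]{OS25}. If \(\bu\in\cF_J\) had \(\bw\cdot\bu<0\), the shift \(-\bu\) would produce a binomial whose initial monomial is not killed at \(\bv\). That would contradict \(\bv\) lying on a standard pair. This is the step I expect to be the main obstacle, and I would first try to extract it directly from the OS25 argument for \(I_{\bzero}\), since the same reasoning should apply to supports contained in \(I_{\bzero}\). If that fails, I would instead take \(\cN_0=\{I_{\bzero}\}\), which is trivially ordered: a \(J\in\sS(\bv)\) contained in \(I_{\bzero}\) is handled by the same \(\bw\)-sign argument, so the fallback needs the same input.

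Next, the union of ordered families is again ordered. It contains \(I_{\bzero}\) and lies in \(\cN_{\bv}\), and a \(J\) contained in a member \(I\) lies in the family containing \(I\). Since \(\sO(\bv)\) is finite and nonempty, \(\cN_{\bv}^{\ord}\) is an ordered family containing every member of \(\sO(\bv)\), so it is the largest.

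\textbf{Identifying the ambient space with \(\cE_{\bv}\).} By Fact~\ref{fact:defect-coefficient-duality} and Fact~\ref{fact:ambient-realization}, \(V^{\amb}_{\bv,\cN^{\ord}_{\bv}}\) is the full coefficient space for \(\cN^{\ord}_{\bv}\). That space consists of the coefficients of \(\bx^{\bv}\) in solutions whose negative supports all lie in \(\cN^{\ord}_{\bv}\), so it sits inside the span of coefficients of canonical series. Combined with \eqref{eq:actual-fake-local-inclusions}, this gives the inclusion \(\subseteq\cE_{\bv}\).

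For the reverse inclusion, take a canonical series \(\phi\) based at \(\bv\) (Fact~\ref{fact:canonical-series-construction}) and let \(\cN_\phi\) be the set of negative supports \(I_{\bu}\) with \(r_{\bu}\ne0\). I will show that the downward closure of \(\cN_\phi\) is an ordered negative support family. Using Lemma~\ref{lem:coefficient-transport}, if \(c_I\ne0\) and \(J\subseteq I\), then \(\bpartial_{\by}^{I\setminus J}c_J=\bpartial_{\by}^{\varnothing}c_I\ne0\), so \(c_J\ne0\) and hence \(r_{\bu}\ne0\) on \(\cF_J\). Thus \(\cN_\phi\) is already downward closed. It contains \(I_{\bzero}\) whenever the \(\bx^{\bv}\)-coefficient is nonzero, and it lies in \(\cN_{\bv}\) because every shift \(\bu\) occurring in a canonical series in direction \(\bw\) has \(\bw\cdot\bu\geq0\).

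Consequently \(\cN_\phi\in\sO(\bv)\), so \(\cN_\phi\subseteq\cN^{\ord}_{\bv}\). The coefficient of \(\bx^{\bv}\) in \(\phi\) therefore lies in the full, hence ambient, space, which gives \(\cE_{\bv}\subseteq V^{\amb}\).

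\textbf{Dualizing.} For the ideal statement, \(J^{\amb}\) is homogeneous, and in each degree \(V^{\amb}_i=(J^{\amb})_i^\perp\). The pairing is perfect degreewise on the finite-dimensional spaces \(S_{0,i}\), so taking orthogonal complements gives \((J^{\amb})_i=(\cE_{\bv})_i^\perp\) for every \(i\). Hence \(J^{\amb}_{\bv,\cN^{\ord}_{\bv}}=\cE_{\bv}^\perp\).
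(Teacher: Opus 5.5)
Your union argument, your proof of \(\cE_{\bv}\subseteq V_{\bv,\cN_{\bv}^{\ord}}^{\amb}\), and your dualization follow the paper. Your \(\cN_\phi\) is the paper's \(\cN(p)\), built from the normalized coefficients of Lemma~\ref{lem:coefficient-transport}.

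The genuine difference is nonemptiness. The paper reads it off from \(\cN(p)\), using \(\cE_{\bv}\ne0\). Since you construct \(\cN_\phi\) anyway, your separate argument is not needed for the lemma. It is nevertheless correct, and it gives \(\sO(\bv)\ne\varnothing\) for every fake exponent, not only for exponents. The step you flag is short. If \(\bw\cdot\bu<0\), then \(\bpartial^{\bu_-}\in\inw(I_A)\), so \([\bv]_{\bu_-}=0\) because \(\bv\) is a fake exponent. Hence some \(j\) has \(v_j\in\{0,1,\ldots,(u_-)_j-1\}\), and then \(j\in I_{\bu}\setminus I_{\bzero}\). Thus \(I_{\bu}\subseteq I_{\bzero}\) forces \(\bw\cdot\bu\geq0\), which gives \(\cN_0\subseteq\cN_{\bv}\). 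Your fallback is wrong as stated, though: \(\{I_{\bzero}\}\) is ordered only when no member of \(\sS(\bv)\) is a proper subset of \(I_{\bzero}\). That is not a sign condition, and it can fail.

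Two steps need tightening.

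First, \(I\in\cN_{\bv}\) requires \(\bw\cdot\bu'\geq0\) for every \(\bu'\in\cF_I\), not only for the shifts that occur in \(\phi\). It holds because \(c_{\bu'}=c_I\ne0\) forces \(r_{\bu'}\ne0\) on the whole fiber. So every \(\bu'\in\cF_I\) occurs in \(\phi\), and the support condition applies to it. The paper invokes exactly this fiber-constancy, and you should state it at this point.

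Second, for \(V_{\bv,\cN_{\bv}^{\ord}}^{\amb}\subseteq\cE_{\bv}\), the inclusions \eqref{eq:actual-fake-local-inclusions} do not help, since they bound \(\cE_{\bv}\) from above by \(P_B(\bv)^\perp\). The missing reason is \(\cN_{\bv}^{\ord}\subseteq\cN_{\bv}\). Every shift of a solution supported on \(\cN_{\bv}\) has nonnegative \(\bw\)-weight, so \(\bx^{\bv}\) lies in its initial part. Then \cite[Theorem~4.9]{OS25} identifies the full coefficient space for \(\cN_{\bv}\) with \(\cE_{\bv}\); this is what the paper cites.
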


\begin{proof}
	Take a nonzero element \(p\in\cE_{\bv}\).
	By the definition of \(\cE_{\bv}\), there is a canonical series in the direction \(\bw\) based at \(\bv\) whose coefficient at \(\bx^{\bv}\) is \(p\).
	Apply Lemma~\ref{lem:coefficient-transport} after extending the coefficients of this series by zero to all shifts in \(L\), and let \((c_I)_{I\in\sS(\bv)}\) be the resulting normalized coefficients.
	The coefficient \(c_{I_{\bzero}}\) is \(p\), so it is nonzero.
	Define \(\cN(p)\) by \(\cN(p)=\{I\in\sS(\bv)\mid c_I\ne0\}\).
	The normalizing operators in Lemma~\ref{lem:coefficient-transport} are invertible, so \(\cN(p)\) is exactly the set of negative supports that occur in the chosen series.
	We apply the support argument in \cite[Lemma~4.8]{OS25}, which treats an arbitrary series.
	If \(c_{I_{\bu}}\ne0\) and \(I_{\bu}\notin\cN_{\bv}\), there is a shift \(\bu'\in L\) such that \(I_{\bu'}=I_{\bu}\) and \(\bw\cdot\bu'<0\).
	The support condition for canonical series in the direction \(\bw\) excludes the shift \(\bu'\), so the normalized coefficient at \(\bu'\) is zero.
	By Lemma~\ref{lem:coefficient-transport}, the normalized coefficient depends only on the negative support, so \(c_{I_{\bu}}=0\), which is a contradiction.
	Thus \(\cN(p)\subseteq\cN_{\bv}\), and \(c_{I_{\bzero}}=p\ne0\) gives \(I_{\bzero}\in\cN(p)\).
	If \(J\subseteq I\) and \(c_I\ne0\), equation \eqref{eq:coefficient-transport} gives \(c_I=\bpartial_{\by}^{I\setminus J}c_J\), and hence \(c_J\ne0\).
	Thus \(\cN(p)\) is ordered.
	In particular, \(\sO(\bv)\) is nonempty.

	The union in \eqref{eq:largest-ordered-family} is also an ordered negative support family, so it is the largest one.
	The chosen series is supported on \(\cN(p)\subseteq\cN_{\bv}^{\ord}\).
	Theorem~5.5 of \cite{OS25} therefore gives \(p\in V_{\bv,\cN_{\bv}^{\ord}}^{\amb}\).
	This proves that the right-hand side of the first equality in \eqref{eq:largest-ordered-family-full-inverse-system} is contained in the left-hand side.
	Conversely, Theorem~5.5 of \cite{OS25} identifies \(V_{\bv,\cN_{\bv}^{\ord}}^{\amb}\) with the full coefficient space at \(\bv\) for \(\cN_{\bv}^{\ord}\).
	Since \(\cN_{\bv}^{\ord}\subseteq\cN_{\bv}\), this space is contained in the full coefficient space at \(\bv\) for \(\cN_{\bv}\), which Theorem~4.9 of \cite{OS25} and the definition of \(\cE_{\bv}\) identify with \(\cE_{\bv}\).
	Because the ambient coefficient space \(V_{\bv,\cN_{\bv}^{\ord}}^{\amb}\) equals the full coefficient space \(\cE_{\bv}\) for \(\cN_{\bv}\) and \(\langle\cdot,\cdot\rangle_{\bs}\) is perfect in each degree, the second equality in \eqref{eq:largest-ordered-family-full-inverse-system} follows.
\end{proof}

Define \(\cI_{\bv}^{L}\), \(J_{\bv}^{L}\), and \(\delta_{\bv}^{L}\) by \(\cI_{\bv}^{L}=\sum_{\cN\in\sO(\bv)}V_{\bv,\cN}^{\intr}\), \(J_{\bv}^{L}=\bigcap_{\cN\in\sO(\bv)}J_{\bv,\cN}^{\intr}\), and \(\delta_{\bv}^{L}=\dim_{\C}\bigl(\cE_{\bv}/\cI_{\bv}^{L}\bigr)\).
The space \(\cI_{\bv}^{L}\) and the quotient \(\cE_{\bv}/\cI_{\bv}^{L}\) are finite-dimensional because \(\cE_{\bv}\) is finite-dimensional.

\begin{proposition}
	\label{prop:all-ordered-local-quotient}
	For every \(\bv\in\sE_{\bbeta,\bw}\), we have
	\begin{equation}
		P_B(\bv)
		\subseteq
		\cE_{\bv}^{\perp}
		=
		J_{\bv,\cN_{\bv}^{\ord}}^{\amb}
		\subseteq
		J_{\bv}^{L},
		\qquad
		\cI_{\bv}^{L}
		=
		(J_{\bv}^{L})^\perp,
		\label{eq:all-ordered-ideal-inclusion}
	\end{equation}
	and \(\langle\cdot,\cdot\rangle_{\bs}\) gives
	\begin{equation}
		\delta_{\bv}^{L}
		=
		\dim_{\C}
		\left(
		\frac{J_{\bv}^{L}}
		{\cE_{\bv}^{\perp}}
		\right).
		\label{eq:all-ordered-local-ideal-codimension}
	\end{equation}
\end{proposition}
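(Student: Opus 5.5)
The chain of inclusions in \eqref{eq:all-ordered-ideal-inclusion} will be assembled from earlier results, degree by degree. The first inclusion \(P_B(\bv)\subseteq\cE_{\bv}^{\perp}\) is \eqref{eq:actual-fake-local-inclusions}. The equality \(\cE_{\bv}^{\perp}=J_{\bv,\cN_{\bv}^{\ord}}^{\amb}\) is the second equality in \eqref{eq:largest-ordered-family-full-inverse-system} of Lemma~\ref{lem:largest-ordered-family}; that lemma also guarantees \(\sO(\bv)\neq\varnothing\), so the intersection defining \(J_{\bv}^{L}\) is over a nonempty index set.

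For the last inclusion, I fix \(\cN\in\sO(\bv)\) and prove \(\cE_{\bv}^{\perp}\subseteq J_{\bv,\cN}^{\intr}\). Fact~\ref{fact:defect-coefficient-duality} gives \(V_{\bv,\cN,i}^{\intr}\subseteq V_{\bv,\cN,i}^{\amb}\). Since \(\cN\) is ordered, the ambient space \(V_{\bv,\cN}^{\amb}\) is the full coefficient space for \(\cN\). Because \(\cN\subseteq\cN_{\bv}^{\ord}\), this space is contained in the full coefficient space for \(\cN_{\bv}^{\ord}\), which Lemma~\ref{lem:largest-ordered-family} identifies with \(\cE_{\bv}\). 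Thus \(V_{\bv,\cN,i}^{\intr}\subseteq(\cE_{\bv})_i\) in each degree \(i\).

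Next I take orthogonal complements in each degree. The pairing \(\langle\cdot,\cdot\rangle_{\bs}\) between \(S_{0,i}\) and \(\C[\bpartial_{\bs}]_i\) is perfect and finite-dimensional, so \(W\mapsto W^\perp\) is an inclusion-reversing involution. Since \(J_{\bv,\cN}^{\intr}\) is homogeneous, \((V_{\bv,\cN,i}^{\intr})^\perp=(J_{\bv,\cN}^{\intr})_i\). The same holds for \(\cE_{\bv}\), provided \(\cE_{\bv}^\perp\) is understood degreewise, which is legitimate because \(\cE_{\bv}\) is graded. Hence \((\cE_{\bv}^\perp)_i\subseteq(J_{\bv,\cN}^{\intr})_i\) for every \(i\). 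Intersecting over \(\cN\in\sO(\bv)\) gives \(\cE_{\bv}^\perp\subseteq J_{\bv}^{L}\).

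For \(\cI_{\bv}^{L}=(J_{\bv}^{L})^\perp\), I again work degreewise. The complement of an intersection of subspaces is the sum of their complements, so
\[
	\Bigl(\bigcap_{\cN}(J_{\bv,\cN}^{\intr})_i\Bigr)^\perp=\sum_{\cN}V_{\bv,\cN,i}^{\intr}.
\]
This is the degree-\(i\) part of \(\cI_{\bv}^{L}\). The formula needs both finite-dimensionality and finiteness of \(\sO(\bv)\), and both hold.

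For \eqref{eq:all-ordered-local-ideal-codimension}, note that \(\cE_{\bv}\) vanishes in all large degrees, because it is finite-dimensional. In each degree, perfectness of the pairing and \(\cE_{\bv}^\perp\subseteq J_{\bv}^{L}\) give a perfect pairing between \((J_{\bv}^{L})_i/(\cE_{\bv}^\perp)_i\) and \((\cE_{\bv})_i/(\cI_{\bv}^{L})_i\), by the same argument as in Fact~\ref{fact:defect-coefficient-duality}. So the two dimensions agree degreewise. In large degrees both sides vanish, since \((\cE_{\bv}^\perp)_i=S_{0,i}\) there. Summing over \(i\) gives the formula, and in particular the right side is finite.

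I expect the main obstacle to be justifying \(V_{\bv,\cN}^{\amb}\subseteq V_{\bv,\cN_{\bv}^{\ord}}^{\amb}\) for \(\cN\subseteq\cN_{\bv}^{\ord}\). My plan is to use the full-coefficient-space description: a series supported on \(\cN\) is in particular supported on the larger family. Theorem~5.5 of \cite{OS25}, via Fact~\ref{fact:ambient-realization}, then identifies each ambient space with the corresponding full coefficient space.
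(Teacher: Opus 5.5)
Your proposal is correct and follows the paper's proof. The key step is the chain \(V_{\bv,\cN}^{\intr}\subseteq V_{\bv,\cN}^{\amb}\subseteq\cE_{\bv}\), obtained from the support inclusion \(\cN\subseteq\cN_{\bv}^{\ord}\) and Lemma~\ref{lem:largest-ordered-family} and followed by degreewise annihilators and the perfect pairing; this is exactly the paper's argument, and you only spell out the finite-dimensional duality details that the paper compresses into one sentence.
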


\begin{proof}
	For every \(\cN\in\sO(\bv)\), a series supported on \(\cN\) is also supported on \(\cN_{\bv}^{\ord}\), so Lemma~\ref{lem:largest-ordered-family} gives \(V_{\bv,\cN}^{\amb}\subseteq\cE_{\bv}\).
	Fact~\ref{fact:defect-coefficient-duality} then gives \(V_{\bv,\cN}^{\intr}\subseteq\cE_{\bv}\).
	Taking sums of coefficient spaces and annihilators gives \eqref{eq:all-ordered-ideal-inclusion}.
	Since the ideals are homogeneous, the pairing \(\langle\cdot,\cdot\rangle_{\bs}\) identifies the graded dual of the quotient in \eqref{eq:all-ordered-local-ideal-codimension} with \(\cE_{\bv}/\cI_{\bv}^{L}\).
	Both spaces are finite-dimensional, so their dimensions are equal.
\end{proof}

\begin{remark}
	If the local components of the indicial ideal and the fake indicial ideal agree at \(\bv\), then the inclusions in \eqref{eq:actual-fake-local-inclusions} are equalities and the denominator in \eqref{eq:all-ordered-local-ideal-codimension} can be replaced by \(P_B(\bv)\).
\end{remark}

For \(\bv,\bv'\in\sE_{\bbeta,\bw}\), define \(\bv\sim_{\bw}\bv'\) to mean that \(\bv'-\bv\in L\) and \(\bw\cdot(\bv'-\bv)=0\).
Let \(\sP_{\bbeta,\bw}\) be the set of equivalence classes for this relation.
Define \(P\prec_{\bw}P'\) to mean that \(\bw\cdot(\bv'-\bv)>0\) for \(\bv\in P\) and \(\bv'\in P'\).
This relation totally orders the classes contained in one \(L\)-coset.
Define \(\sP_{\bbeta,\bw}^{\min}\) to be the set of the least classes in the \(L\)-cosets.
For \(P\in\sP_{\bbeta,\bw}\), define \(\cE_P\) and \(\cI_P^{L}\) by
\begin{equation}
	\cE_P
	=
	\bigoplus_{\bv\in P}\cE_{\bv},
	\qquad
	\cI_P^{L}
	=
	\bigoplus_{\bv\in P}\cI_{\bv}^{L}.
	\label{eq:formal-weight-class-local-spaces}
\end{equation}

Let \(\cV_{\bw}\) denote the canonical formal solution space in the direction \(\bw\), that is, the space of formal series solutions obtained by the canonical-series construction.
Define \(\cV_{\bw}^{L}\) to be the span of the canonical series obtained by intrinsic perturbation, as the exponent \(\bv\in\sE_{\bbeta,\bw}\) and the ordered negative support family vary.

\begin{proposition}
	\label{prop:canonical-nilsson-identification}
	Under the standing homogeneity assumption on \(A\) and the standing genericity assumption on \(\bw\), there exist a positive weight vector \(\bw'\) and a strongly convex open rational polyhedral cone \(\cQ\subseteq\R_{>0}^n\) such that
	\begin{align*}
		\bw' & \in\cQ,
		\qquad
		\bone\in\overline{\cQ},
		\qquad
		\operatorname{in}_{\bw'}(I_A)=\inw(I_A), \\
		\operatorname{in}_{(-\bw',\bw')}\bigl(H_A(\bbeta)\bigr)
		     & =
		\operatorname{in}_{(-\bw,\bw)}\bigl(H_A(\bbeta)\bigr).
	\end{align*}
	For this choice of \(\bw'\), the canonical formal solution space \(\cV_{\bw}\) is the \(\C\)-span of the basic Nilsson solutions of \(H_A(\bbeta)\) in the direction \(\bw'\).
	For every starting monomial, the corresponding canonical series is the unique basic Nilsson solution whose coefficient at that starting monomial is one and in which no other starting monomial occurs.
	An arbitrary basic Nilsson solution is, in general, a unique finite linear combination of these canonical series and need not equal any one of them.
\end{proposition}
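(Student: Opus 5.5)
Set \(\bw'=\bw+\lambda\bone\) for a large positive number \(\lambda\), and take \(\cQ\) to be a small open cone around the ray through \(\bw'\) enlarged so that its closure contains \(\bone\).
Since the columns of \(A\) lie on the hyperplane \(\bh\ba_j=1\) of Remark~\ref{rem:homogeneity-scope}, the vector \(\bone=\bh A\) lies in the row space of \(A\).
Hence \(\bone\cdot\bu=0\) for every \(\bu\in L\), so \(\bw'\cdot\bu=\bw\cdot\bu\) for all \(\bu\in L\).
The walls of the Gr\"obner fan of \(I_A\) and of the small Gr\"obner fan of \(H_A(\bbeta)\) are cut out by comparisons \(\bw\cdot\bu=0\) with \(\bu\in L\setminus\{\bzero\}\), as recalled in Section~\ref{sec:fake-exponents}.
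Therefore \(\bw\) and \(\bw'\) lie in the interior of the same full-dimensional cones of both fans, and this gives \(\operatorname{in}_{\bw'}(I_A)=\inw(I_A)\) and \(\operatorname{in}_{(-\bw',\bw')}(H_A(\bbeta))=\operatorname{in}_{(-\bw,\bw)}(H_A(\bbeta))\).
For \(\lambda\) large, \(\bw'\) is positive.

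Next take \(\cQ\) to be the intersection of the open Gr\"obner cone of \(H_A(\bbeta)\) containing \(\bw\) with \(\R_{>0}^n\) and with the translates of that cone along \(\bone\).
Equivalently, \(\cQ\) is the relative interior of the cone cut out by the finitely many strict inequalities \(\bw''\cdot\bg>0\) for the lattice vectors \(\bg\) defining the cone, together with \(\bw''\in\R_{>0}^n\).
This cone is rational polyhedral and open.
It is strongly convex because it lies in \(\R_{>0}^n\).
It contains \(\bw'\), and it contains \(\bw+\lambda\bone\) for every sufficiently large \(\lambda\).
Dividing by \(\lambda\) and letting \(\lambda\to\infty\) shows that \(\bone\in\overline{\cQ}\).

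For the identification, recall that basic Nilsson solutions in the direction \(\bw'\), in the sense of \cite{DMM12}, are defined through the convergence cone and the initial system for \(\bw'\).
Since the initial ideals coincide, the exponents, the starting monomials, and the monomial order refining the weight (which only sees \(L\)-differences) are the same for \(\bw\) and \(\bw'\).
By Fact~\ref{fact:canonical-series-construction}, the canonical series \(\Xi_{\fs}\) form a basis of the solutions in the space \(N\).
I would show that each basic Nilsson solution lies in this \(N\) by checking that its support lies in \(\bv+\cC(\bw)\)-type shifts with nonnegative \(\bw\)-weight.
Conversely, I would show that each \(\Xi_{\fs}\) is a Nilsson series in the direction \(\bw'\) by checking that its support condition is the one in \cite{DMM12} and that positivity of \(\bw'\) is what that definition requires.
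The two spans then coincide.

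The uniqueness statement follows from Fact~\ref{fact:canonical-series-construction}: a basic Nilsson solution with coefficient one at \(\fs\) and no other starting monomial differs from \(\Xi_{\fs}\) by a solution with no starting monomials, and such a solution is zero.
An arbitrary basic Nilsson solution expands uniquely in the basis \(\Xi_{\fs}\), and its coefficients are read off at the starting monomials.

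The main obstacle is matching the definitions precisely.
One must verify that the formal-series space \(N\) of \cite{SST00} for \(\bw\) and the Nilsson ring of \cite{DMM12} for \(\bw'\), which requires a positive weight and uses \(\cQ\), contain the same solutions.
I would argue this with the homogeneity of \(H_A(\bbeta)\): the Euler operators force every solution to be supported on \(\bv+L\) with \(\bv\) an exponent, and on \(L\) the two weights agree.
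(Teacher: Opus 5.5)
Your choice \(\bw'=\bw+\lambda\bone\), the argument that the initial ideals are unchanged because \(\bone\) is orthogonal to \(L\), and your cone \(\cQ\) all match the paper. The paper phrases the first point as invariance of the common Gr\"obner cone \(\cQ_0\) under translation by the row space, and it first reduces to \(\Z A=\Z^d\) so that the results of \cite{DMM12} apply. The inclusion of \(\cV_{\bw}\) in the span of basic Nilsson solutions also works: \(\cQ\subseteq\cQ_0\) gives \(\cQ_0^*\subseteq\cQ^*\), so the shifts of each \(\Xi_{\fs}\), which lie in \(\cQ_0^*\cap L\), lie in \(\cQ^*\cap L\).

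The gap is in the reverse inclusion and in the uniqueness. You reduce both to the unproved claim that every basic Nilsson solution in the direction \(\bw'\) lies in the space \(N\) of Fact~\ref{fact:canonical-series-construction}. By \cite[Definition~2.6]{DMM12}, a basic Nilsson series only has its shifts in \(C^*\cap L\) for \emph{some} strongly convex open cone \(C\) containing \(\bw'\). Nothing forces \(C\subseteq\cQ_0\), so the support need not lie in \(\bv+\cQ_0^*\). The check you propose does not put it there. Nonnegative \(\bw\)-weight is not enough, since \(\cQ_0^*\) is a pointed cone in \(L\otimes\R\) and is strictly smaller than the half-space \(\{\bu\in L\otimes\R\mid\bw\cdot\bu\geq0\}\) when \(r\geq2\). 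Membership in \(\cC(\bw)\) is not known a priori either. Your closing remark that \(\bw\) and \(\bw'\) agree on \(L\) removes the discrepancy between the two weights, but not the one between the cones.

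The missing idea is to avoid comparing \(N\) with the Nilsson ring and use a leading-term argument directly on Nilsson series, as the paper does.
\begin{itemize}
\item Given a basic Nilsson solution \(\phi\), let \(a_{\fs}(\phi)\) be its coefficient at the starting monomial \(\fs\), and put \(\phi_0=\phi-\sum_{\fs}a_{\fs}(\phi)\Xi_{\fs}\). Since \(\Xi_{\fs}\) has coefficient one at \(\fs\) and no other starting monomial, no starting monomial occurs in \(\phi_0\).
\item If \(\phi_0\ne0\), its support is \(\bw'\)-well-ordered, because it is a finite sum of series supported in duals of strongly convex open cones containing \(\bw'\).
\item Its least-weight part is annihilated by \(\operatorname{in}_{(-\bw',\bw')}(H_A(\bbeta))=\operatorname{in}_{(-\bw,\bw)}(H_A(\bbeta))\). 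Its components at distinct exponents are separately annihilated, because \(\inw(I_A)\) is monomial.
\item Hence the initial monomial of \(\phi_0\), for the fixed order refining the weight, is a starting monomial. This is a contradiction.
\end{itemize}
This single argument gives the spanning statement, the uniqueness of the expansion, and the characterization of \(\Xi_{\fs}\) as the only basic Nilsson solution with coefficient one at \(\fs\) and no other starting monomial.
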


\begin{proof}
	Put \(\Lambda=\Z A\), choose a \(\Z\)-basis of \(\Lambda\), and let \(T\in\Z^{d\times d}\) have these basis vectors as its columns.
	Put \(A'=T^{-1}A\) and \(\bbeta'=T^{-1}\bbeta\).
	Then \(A'\) is integral, \(\Z A'=\Z^d\), and \(\ker_{\Z}(A')=\ker_{\Z}(A)=L\).
	The identity \(A\tp{(x_1\partial_1,\ldots,x_n\partial_n)}-\bbeta = T\left( A'\tp{(x_1\partial_1,\ldots,x_n\partial_n)}-\bbeta' \right)\) and the equality of the integer kernels give \(H_A(\bbeta)=H_{A'}(\bbeta')\) as left ideals in the same Weyl algebra.
	This replacement changes neither the canonical series nor the basic Nilsson solutions, so we may assume \(\Z A=\Z^d\), as in \cite{DMM12}.

	Choose \(\bh\) such that \(\bh\ba_j=1\) for \(j=1,\ldots,n\).
	Then \(\bone=\bh A\) belongs to \(\operatorname{rowspan}_{\R}(A)\), and \(|\bu| = \bone\cdot\bu = \bh(A\bu) = 0\) for \(\bu\in L\).
	Let \(\cQ_0\) be the common full-dimensional open rational polyhedral Gr\"obner cone containing \(\bw\) on which the toric and Weyl algebra initial ideals are constant.
	If \(\br\in\operatorname{rowspan}_{\R}(A)\), then adding \(\br\) to a weight adds the same constant to the weights of all terms of an \(A\)-homogeneous polynomial or differential operator.
	Hence \(\cQ_0\) is invariant under addition by \(\operatorname{rowspan}_{\R}(A)\); the same calculation appears in \cite[Remark~2.5]{DMM12}.
	Choose \(\lambda>0\) sufficiently large and put \(\bw'=\bw+\lambda\bone\).
	Then \(\bw'>0\), the displayed initial ideals agree, and \(\bw'\cdot\bu=\bw\cdot\bu\) for \(\bu\in L\).
	Put \(\cQ=\cQ_0\cap\R_{>0}^n\).
	The cone \(\cQ\) is open, rational polyhedral, and strongly convex, and it contains \(\bw'\).
	For every sufficiently large positive real number \(\mu\), the vector \(\mu^{-1}(\bw+\mu\bone)=\bone+\mu^{-1}\bw\) belongs to \(\cQ\), and hence \(\bone\in\overline{\cQ}\).
	Thus \(\bw'\) is a perturbation of \(\bone\) in the sense of \cite[Definition~3.3]{DMM12}.

	Fix the monomial order refining the \(\bw\)-weight that is used in the canonical-series construction.
	The equality \(\bw'\cdot\bu=\bw\cdot\bu\) for \(\bu\in L\) shows that the same order refines the \(\bw'\)-weight on every support coset.
	By Fact~\ref{fact:canonical-series-construction}, the solutions of the common initial system are finite sums of terms \(\bx^{\bv}q(\log\bx)\), where \(\bv\) is an exponent and \(q\) belongs to the local inverse system at \(\bv\).
	Fact~\ref{fact:canonical-series-construction} also shows that the set of starting monomials is finite and that each starting monomial \(\fs\) determines a unique canonical series \(\Xi_{\fs}\) whose coefficient at \(\fs\) is one and in which no other starting monomial occurs.
	The series \(\Xi_{\fs}\), as \(\fs\) ranges over all starting monomials, form the canonical basis of \(\cV_{\bw}\).

	The support condition for canonical series in \cite[Section~2.5]{SST00} shows that the lattice shifts of \(\Xi_{\fs}\) lie in \(\cQ_0^*\cap L\), and hence in \(\cQ^*\cap L\).
	The logarithmic degrees in each \(\Xi_{\fs}\) have a common finite bound, and the coefficient of the zero shift is nonzero.
	Therefore every \(\Xi_{\fs}\) satisfies the three conditions in \cite[Definition~2.6]{DMM12} and is a basic Nilsson solution in the direction \(\bw'\).
	This proves
	\[
		\cV_{\bw}
		\subseteq
		\Span
		\left\{
		\text{basic Nilsson solutions in the direction \(\bw'\)}
		\right\}.
	\]

	Conversely, let \(\phi\) be a basic Nilsson solution in the direction \(\bw'\).
	By comparing the least \(\bw'\)-weight terms in the equations \(H_A(\bbeta)\phi=0\), we see that the initial series of \(\phi\) is annihilated by the common initial system.
	Write the initial series of \(\phi\) as \(\bx^{\bv}p(\log\bx)\).
	By \cite[Lemma~2.10]{DMM12}, the vector \(\bv\) is an exponent of \(H_A(\bbeta)\) with respect to \(\bw'\).
	Thus every starting monomial occurring in \(\phi\) belongs to the same finite set that indexes the canonical basis.
	For each starting monomial \(\fs\), let \(a_{\fs}(\phi)\) be its coefficient in \(\phi\), and put \(\phi_0 = \phi - \sum_{\fs} a_{\fs}(\phi)\Xi_{\fs}\).
	The defining coefficient conditions for the series \(\Xi_{\fs}\) show that no starting monomial occurs in \(\phi_0\).
	Suppose that \(\phi_0\ne0\).
	The common toric initial ideal is monomial, so the components of the initial series of \(\phi_0\) at distinct exponents are separately annihilated by the common initial system, and the initial monomial of \(\phi_0\) with respect to the refined weight order is therefore a starting monomial.
	This contradicts the construction of \(\phi_0\).
	Hence \(\phi_0=0\), and \(\phi\) is a finite linear combination of the series \(\Xi_{\fs}\).
	The same coefficient conditions make this linear combination unique.

	If the initial solutions contain logarithms, the starting monomials have the form \(\bx^{\bv}(\log\bx)^{\bk}\), and distinct multi-indices \(\bk\) give distinct starting monomials, and hence index distinct elements \(\Xi_{\fs}\) of the canonical basis.
	Thus the preceding correspondence preserves the complete logarithmic initial data rather than only the exponent \(\bv\).
	By \cite[Lemma~2.8]{DMM12}, independent initial series lift to independent Nilsson solutions, and any independent family of Nilsson solutions can be recombined so that its initial series are independent.
	A basic Nilsson solution equals one of the series \(\Xi_{\fs}\) only when exactly one of its coefficients at the starting monomials is nonzero and that coefficient is one.
\end{proof}

For exponents \(\balpha\) and \(\balpha'\) in one \(L\)-coset, the number \(\bw\cdot(\balpha'-\balpha)\) is the relative \(\bw\)-weight of \(\balpha'\) with respect to \(\balpha\).
This number depends only on the two exponents, so it orders each \(L\)-coset without reference to a base point.

\begin{lemma}
	\label{lem:least-term}
	Let \(0\ne\phi\in\cV_{\bw}\) be supported on one \(L\)-coset.
	The support of \(\phi\) is \(\bw\)-well-ordered: every nonempty subset of the support contains an element of least relative \(\bw\)-weight.
	Define \(\phi_{\min}\) to be the sum of the terms of least relative \(\bw\)-weight.
	Then
	\begin{equation}
		\operatorname{in}_{(-\bw,\bw)}\bigl(H_A(\bbeta)\bigr)\phi_{\min}=0.
		\label{eq:least-term-initial-system}
	\end{equation}
	Every exponent with a nonzero coefficient in \(\phi_{\min}\) is a fake exponent and belongs to \(\sE_{\bbeta,\bw}\).
\end{lemma}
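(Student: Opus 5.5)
We split the statement into three parts: well-ordering of the support, the initial-system equation, and the identification of the exponents. Each reduces to facts already recorded about canonical series and their cone support.

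\textbf{Well-ordering.} By Proposition~\ref{prop:canonical-nilsson-identification}, \(\phi\) is a finite linear combination of canonical series \(\Xi_{\fs}\). Each \(\Xi_{\fs}\) has the form \(\bx^{\bv_{\fs}}\) times a series whose lattice shifts lie in \(\cQ_0^*\cap L\). On this set \(\bw\) is positive away from \(\bzero\): \(\bw\) lies in the open cone \(\cQ_0\), and \(\bw\cdot\bu\ne0\) for nonzero \(\bu\in L\) by genericity. Since \(\cQ_0^*\cap L\) is a finitely generated semigroup and \(\bw\) is strictly positive on its nonzero generators, only finitely many shifts have \(\bw\)-weight below any given bound. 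The support of \(\phi\) lies in a finite union of translates \(\bv_{\fs}+(\cQ_0^*\cap L)\) inside one \(L\)-coset. Relative \(\bw\)-weights therefore take values in a set with finitely many elements below each bound, so every nonempty subset of the support has a least-weight element. In particular \(\phi_{\min}\) is a finite sum of terms \(\bx^{\balpha}q_{\balpha}(\log\bx)\) sharing one relative weight.

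\textbf{The initial equation.} Take \(P\in H_A(\bbeta)\) and write \(P=\operatorname{in}_{(-\bw,\bw)}(P)+P'\), where \(P'\) has strictly larger \((-\bw,\bw)\)-weight. Applying a monomial \(\bx^{\ba}\bpartial^{\bb}\) to \(\bx^{\balpha}q(\log\bx)\) produces terms \(\bx^{\balpha+\ba-\bb}\) times polynomials in \(\log\bx\). This shifts the \(\bw\)-weight of the exponent by \(\bw\cdot(\ba-\bb)\), which is the operator's weight. Comparing the terms of least weight in \(0=P\phi\) gives \(\operatorname{in}_{(-\bw,\bw)}(P)\phi_{\min}=0\). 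The terms of \(P\phi\) at that weight come only from \(\operatorname{in}(P)\) applied to \(\phi_{\min}\), since \(\phi-\phi_{\min}\) and \(P'\) both raise the weight. The point needing care is that the \(\bw\)-weight is only defined up to a common constant on an \(L\)-coset. We pass to \(\bw'=\bw+\lambda\bone\) as in Proposition~\ref{prop:canonical-nilsson-identification}. The two initial ideals coincide, and \(\bw'\) gives the same relative weights on an \(L\)-coset. By \(A\)-homogeneity each generator can be assumed homogeneous for the \(A\)-grading, so all exponents in \(P\phi\) stay in one coset, where weights compare consistently. The initial ideal is generated by the initial forms of its elements, which gives \eqref{eq:least-term-initial-system}. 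This is the same comparison already used in the converse part of the proof of Proposition~\ref{prop:canonical-nilsson-identification}.

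\textbf{Exponents.} The initial ideal contains \(\inw(I_A)\), which is monomial, and the Euler operators. Monomials preserve exponents up to the logarithmic part, so the components of \(\phi_{\min}\) at distinct exponents \(\balpha\) are separately annihilated by the initial system. By Fact~\ref{fact:canonical-series-construction}, solutions of the initial system in \(N\) are sums of \(\bx^{\bv}q(\log\bx)\) with \(\bv\) a zero of the indicial ideal. Hence each such \(\balpha\) is an exponent, i.e.\ \(\cE_{\balpha}\ne0\), so \(\balpha\in\sE_{\bbeta,\bw}\). Because the indicial ideal contains the fake indicial ideal, \(\balpha\) is also a zero of the latter, and \(A\balpha=\bbeta\) from the Euler operators, so \(\balpha\) is a fake exponent.

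\textbf{Main obstacle.} The hard step is the separation of the components of \(\phi_{\min}\) by exponent together with the membership \(\phi_{\min}\in N\). Here we need that the support of \(\phi\) lies in the cone used to define \(N\), and this follows from \(\cV_{\bw}\) being spanned by the \(\Xi_{\fs}\).
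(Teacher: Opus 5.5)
Your proof is correct, and its first two steps match the paper's. The well-ordering comes from the cone support of the canonical series: you use the shifts in \(\cQ_0^*\cap L\) from the proof of Proposition~\ref{prop:canonical-nilsson-identification}, and the paper uses the support condition for basic Nilsson solutions. The initial equation comes from the least-weight terms of \(P\phi=0\). Your detour through \(\bw'\) and \(A\)-homogeneity is harmless but unnecessary.

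The last step runs in the opposite order. The paper first proves directly that each exponent \(\balpha\) of \(\phi_{\min}\) is fake: it applies an element \(f\) of the fake indicial ideal to get \(f(\balpha+\bpartial_{\by})p_{\balpha}=0\) and compares top logarithmic degrees. It then obtains \(\cE_{\balpha}\neq0\) by splitting \(\phi_{\min}\) by exponents, expanding each component in the basis of initial solutions, and lifting to canonical series. You instead read off from Fact~\ref{fact:canonical-series-construction}, by uniqueness of expansions, that each \(p_{\balpha}\) is a nonzero element of the local inverse system of the indicial ideal at \(\balpha\), so \(\cE_{\balpha}\neq0\). You then get fakeness from the inclusion of the fake indicial ideal in the indicial ideal.

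Your route is shorter and does not need the splitting by exponents. The Fact applies to \(\phi_{\min}\) directly, because the indicial ideal consists of polynomials in \(x_1\partial_1,\ldots,x_n\partial_n\). The paper's route, in turn, proves fakeness without using the indicial ideal. Neither point in your ``main obstacle'' is a real obstacle, since \(\phi_{\min}\in N\) is automatic for a finite sum.

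One correction: genericity here does not give \(\bw\cdot\bu\neq0\) for all nonzero \(\bu\in L\). That is only a sufficient condition, which is why the classes in \(\sP_{\bbeta,\bw}\) may contain several exponents. The strict positivity you need on \((\cQ_0^*\cap L)\setminus\{\bzero\}\) follows instead from \(\bw\) lying in the open cone \(\cQ_0\): for small \(\epsilon>0\) we have \(\bw-\epsilon\bu\in\cQ_0\), which gives \(\bw\cdot\bu\geq\epsilon\,\bu\cdot\bu>0\).
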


\begin{proof}
	By \cite[Definition~2.6]{DMM12}, the shifts in the support of a basic Nilsson solution are lattice points in the dual of a strongly convex open cone containing \(\bw\).
	A set of such lattice points is finite if the \(\bw\)-weight is bounded on that set.
	An element of \(\cV_{\bw}\) is a finite sum of basic Nilsson solutions.
	Thus the support is \(\bw\)-well-ordered, and \(\phi_{\min}\) is a nonzero finite sum.
	The assertion about \(\phi_{\min}\) also follows from \cite[Proposition~2.5.2]{SST00}.
	For an operator \(P\in H_A(\bbeta)\), the terms of least \(\bw\)-weight in \(P\phi\) are obtained by applying \(\operatorname{in}_{(-\bw,\bw)}(P)\) to \(\phi_{\min}\).
	Since \(P\phi=0\), equation \eqref{eq:least-term-initial-system} follows.
	This equation is the assertion about the initial series in \cite[Theorem~2.5.5]{SST00}.
	Write \(\phi_{\min}\) as a finite sum of terms \(\bx^{\balpha}p_{\balpha}(\log\bx)\) with distinct exponents and nonzero polynomials \(p_{\balpha}\).
	For every polynomial \(f\) in the fake indicial ideal, equation \eqref{eq:least-term-initial-system} and comparison of the distinct exponents give \(f(\balpha+\bpartial_{\by})p_{\balpha}=0\).
	Comparison of the terms of the highest logarithmic degree gives \(f(\balpha)=0\), so every such \(\balpha\) is a fake exponent.
	This argument treats the whole sum \(\phi_{\min}\), including interactions among terms of the same weight, and extends the conclusion of \cite[Lemma~2.10]{DMM12}, which is stated for a basic Nilsson solution.
	The initial ideal \(\inw(I_A)\) is monomial by genericity, so the components of \(\phi_{\min}\) at the distinct fake exponents are separately annihilated by the initial system.
	Decompose \(\phi_{\min}\) into these components, express each component in the basis of solutions of the initial system given in Fact~\ref{fact:canonical-series-construction}, and replace each basis element by its uniquely determined canonical series.
	Every nonzero component then occurs in the canonical formal solution space with its displayed exponent.
	Thus \(\cE_{\balpha}\ne0\), and hence \(\balpha\in\sE_{\bbeta,\bw}\).
\end{proof}

Decompose \(\cV_{\bw}\) by the \(L\)-cosets of its exponents, and filter each summand by the relative \(\bw\)-weights of the initial exponents of that summand.
These filtrations induce filtrations on the corresponding summands of \(\cV_{\bw}^{L}\).
Write \(\gr\) for the direct sum of the resulting associated graded spaces.
For \(P\in\sP_{\bbeta,\bw}\), define \(\cW_P\) to be the image in \(\cE_P\) of the component indexed by \(P\) in \(\gr\cV_{\bw}^{L}\).

\begin{theorem}
	\label{thm:formal-solution-codimension-bounds}
	The spaces \(\cW_P\) satisfy
	\begin{equation}
		\cI_P^{L}
		\subseteq
		\cW_P,
		\qquad
		\cW_P=\cI_P^{L}
		\quad
		(P\in\sP_{\bbeta,\bw}^{\min}).
		\label{eq:formal-filtration-local-spaces}
	\end{equation}
	The associated graded spaces satisfy
	\begin{equation}
		\gr\cV_{\bw}\simeq\bigoplus_{P\in\sP_{\bbeta,\bw}}\cE_P, \quad \gr\cV_{\bw}^{L}\simeq\bigoplus_{P\in\sP_{\bbeta,\bw}}\cW_P, \quad \gr\bigl(\cV_{\bw}/\cV_{\bw}^{L}\bigr)\simeq\bigoplus_{P\in\sP_{\bbeta,\bw}}\cE_P/\cW_P.
		\label{eq:formal-filtration-graded}
	\end{equation}
	The same filtration gives
	\begin{equation}
		\dim_{\C}\cV_{\bw}=\sum_{P\in\sP_{\bbeta,\bw}}\dim_{\C}\cE_P, \qquad \dim_{\C}\cV_{\bw}^{L}=\sum_{P\in\sP_{\bbeta,\bw}}\dim_{\C}\cW_P.
		\label{eq:formal-filtration-dimensions}
	\end{equation}
	In particular,
	\begin{equation}
		\dim_{\C}
		\left(
		\cV_{\bw}/\cV_{\bw}^{L}
		\right)
		=
		\sum_{P\in\sP_{\bbeta,\bw}}
		\dim_{\C}
		\left(
		\cE_P/\cW_P
		\right).
		\label{eq:formal-filtration-exact-codimension}
	\end{equation}
	Consequently,
	\begin{equation}
		\sum_{P\in\sP_{\bbeta,\bw}^{\min}}
		\sum_{\bv\in P}\delta_{\bv}^{L}
		\leq
		\dim_{\C}
		\left(
		\cV_{\bw}/\cV_{\bw}^{L}
		\right)
		\leq
		\sum_{\bv\in\sE_{\bbeta,\bw}}
		\delta_{\bv}^{L}.
		\label{eq:formal-solution-codimension-bounds}
	\end{equation}
	Equality holds in the upper bound if and only if \(\cW_P=\cI_P^{L}\) for every \(P\in\sP_{\bbeta,\bw}\), and equality holds in the lower bound if and only if \(\cW_P=\cE_P\) for every \(P\notin\sP_{\bbeta,\bw}^{\min}\).
\end{theorem}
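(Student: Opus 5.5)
The plan is to make the filtration explicit through an initial-term map. Then the graded statements reduce to linear algebra, and the local comparison reduces to Proposition~\ref{prop:all-ordered-local-quotient}.

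First, I would fix an \(L\)-coset and define, for each class \(P\) in it, the subspace \(F_{\succeq P}\subseteq\cV_{\bw}\) of series supported on that coset whose terms all have relative \(\bw\)-weight at least that of \(P\). By Lemma~\ref{lem:least-term}, each nonzero \(\phi\) has a well-defined \(\phi_{\min}\), supported at finitely many exponents of one class \(P\). Its coefficients at these exponents, written in lattice coordinates, give an element of \(\cE_P\). The comparison of distinct exponents in the proof of Lemma~\ref{lem:least-term} shows that the component at each \(\bv\in P\) is the coefficient of \(\bx^{\bv}\) in a canonical series based at \(\bv\), so it lies in \(\cE_{\bv}\). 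This defines an injective linear map
\[
F_{\succeq P}/F_{\succ P}\to\cE_P .
\]
Surjectivity would follow from Fact~\ref{fact:canonical-series-construction}: every element of \(\cE_{\bv}\) is the initial data of a canonical series, and its least-weight part is that data. The reason is that canonical series based at \(\bv\) have support in the direction of nonnegative weight, with no other starting monomial at the same weight. This gives \(\gr\cV_{\bw}\simeq\bigoplus_P\cE_P\). Intersecting the filtration with \(\cV_{\bw}^{L}\) gives \(\gr\cV_{\bw}^{L}\simeq\bigoplus\cW_P\) by the definition of \(\cW_P\). Since the filtration has finite length on a finite-dimensional space, exactness of \(\gr\) on the short exact sequence \(0\to\cV_{\bw}^{L}\to\cV_{\bw}\to\cV_{\bw}/\cV_{\bw}^{L}\to0\) with the induced filtrations gives the third isomorphism in \eqref{eq:formal-filtration-graded}. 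The dimension identities \eqref{eq:formal-filtration-dimensions} and \eqref{eq:formal-filtration-exact-codimension} then follow immediately.

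Next, I would compare \(\cW_P\) with \(\cI_P^{L}\). For \(\bv\in P\), an ordered family \(\cN\), and \(q\in V^{\intr}_{\bv,\cN}\), the intrinsic-perturbation canonical series of \cite[Theorem~3.2]{OS25} has coefficient \(q\) at \(\bx^{\bv}\). Its support lies in shifts with \(I_{\bu}\in\cN\subseteq\cN_{\bv}\), so every other term has weight \(\geq0\). Terms of weight exactly \(0\) with \(\bu\ne0\) have exponent \(\bv+\bu\sim_{\bw}\bv\), and they may contribute to other summands \(\cE_{\bv+\bu}\) of \(\cE_P\). This is the main obstacle I expect. To handle it, I would use the normalization of Lemma~\ref{lem:coefficient-transport}: I would argue that such a zero-weight shift has \(I_{\bu}\ne I_{\bzero}\), and that it either has vanishing coefficient or can be removed by subtracting intrinsic series based at \(\bv+\bu\) within the same class. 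The inclusion \(\cI_P^{L}\subseteq\cW_P\) follows by linearity over \(\bv\in P\).

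For \(P\) minimal in its coset, I would show the reverse inclusion. Any element of \(\cV_{\bw}^{L}\) is a combination of intrinsic series based at exponents of classes \(\succeq P\). Only those based in \(P\) itself contribute to weight \(P\), so the image in \(\cE_P\) is a sum of intrinsic initial coefficients, and hence lies in \(\cI_P^{L}\). Finally, \(\cI_P^{L}\subseteq\cW_P\subseteq\cE_P\) gives
\[
\dim(\cE_P/\cW_P)\le\sum_{\bv\in P}\delta_{\bv}^{L},
\]
with equality for minimal \(P\), while \(\dim(\cE_P/\cW_P)\geq 0\) for the other classes. Summing via \eqref{eq:formal-filtration-exact-codimension} yields \eqref{eq:formal-solution-codimension-bounds}. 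Each bound is a sum of termwise inequalities, so equality holds exactly when each term is an equality. This gives the two stated characterizations of equality.
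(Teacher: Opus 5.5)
Your overall architecture is the paper's: the coset-wise filtration by relative \(\bw\)-weight, the identification of the graded pieces with \(\cE_P\) via Lemma~\ref{lem:least-term} and Fact~\ref{fact:canonical-series-construction}, and the termwise derivation of the bounds and equality criteria.

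The gap is at the step you flag yourself. You use only \(\bw\cdot\bu\geq0\), which comes from \(\cN\subseteq\cN_{\bv}\). You then propose to remove terms at nonzero shifts of weight zero by subtracting intrinsic series based at \(\bv+\bu\). This does not go through. Nothing places the coefficient of \(\bx^{\bv+\bu}\) in \(\cI^{L}_{\bv+\bu}\). Also, since \(-\bu\) has weight zero too, the subtracted series could reintroduce terms at \(\bx^{\bv}\), so the correction need not terminate.

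Your proof of \(\cW_P=\cI^{L}_P\) for minimal \(P\) has the same hole in the other direction. The claim ``the image in \(\cE_P\) is a sum of intrinsic initial coefficients'' tacitly assumes that a series based at \(\bv\in P\) has no term at any \(\bv'\in P\setminus\{\bv\}\). That is exactly what you had just left open. Your starting-monomial reasoning from the surjectivity step does not transfer here, because the intrinsic perturbation series are in general not the basis series \(\Xi_{\fs}\).

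The missing observation is that the obstacle never arises. The intrinsic perturbation series are canonical series in the direction \(\bw\) based at \(\bv\), so their shifts satisfy the support condition of \cite[Section~2.5]{SST00]}. They lie in the dual of the open Gr\"obner cone containing \(\bw\). Since \(\bw\) is interior to that cone, every nonzero shift has strictly positive \(\bw\)-weight, not merely nonnegative weight.

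Hence no monomial \(\bx^{\bv'}\) with \(\bv'\sim_{\bw}\bv\) and \(\bv'\ne\bv\) occurs in such a series. Its image in \(\cE_P\) is just its \(\bx^{\bv}\)-coefficient, placed in the \(\bv\)-component. This gives \(\cI^{L}_P\subseteq\cW_P\) with no subtraction. Combined with the fact that series based at later classes have no terms of the least weight, it also gives \(\cW_{P}=\cI^{L}_{P}\) for \(P\in\sP_{\bbeta,\bw}^{\min}\). This is how the paper argues. With this one fact inserted, the rest of your proposal is correct.
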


\begin{proof}
	Partition the members of \(\sE_{\bbeta,\bw}\) into their \(L\)-cosets and then into the classes in \(\sP_{\bbeta,\bw}\).
	Series supported on distinct cosets have no monomial in common, so both \(\cV_{\bw}\) and \(\cV_{\bw}^{L}\) are direct sums over these cosets.
	On these summands we use the direct sum of the filtrations defined below.
	Fix one coset \(C\), and list its classes as \(P_1\prec_{\bw}\cdots\prec_{\bw}P_m\).
	Choose an exponent \(\bv_C\) in \(C\), and define \(a_k\) by \(a_k=\bw\cdot(\bv-\bv_C)\) for \(\bv\in P_k\).
	The number \(a_k\) is well-defined, and \(a_1<\cdots<a_m\).
	Among the canonical series supported on \(C\), let \(F_{C,k}\) be the subspace of those whose nonzero terms have relative weight at least \(a_k\), and put \(F_{C,m+1}=0\).

	By the support condition for canonical series in \cite[Section~2.5]{SST00}, every nonzero shift that occurs in such a series has positive \(\bw\)-weight.
	Fact~\ref{fact:canonical-series-construction} describes the logarithmic coefficients at the exponents in \(P_k\) and shows that each prescribed starting monomial determines a unique canonical series.
	These results identify the quotient \(F_{C,k}/F_{C,k+1}\) by means of the tuple of coefficients at the exponents in \(P_k\):
	\begin{equation}
		F_{C,k}/F_{C,k+1}
		\simeq
		\bigoplus_{\bv\in P_k}\cE_{\bv}
		=
		\cE_{P_k}.
		\label{eq:canonical-series-filtration-quotient}
	\end{equation}
	Every tuple on the right lifts to the sum of the canonical series with the prescribed initial terms.
	Conversely, suppose that the coefficients of a series in \(F_{C,k}\) vanish at every exponent in \(P_k\).
	If this series is nonzero, Lemma~\ref{lem:least-term} shows that every exponent bearing a term of least relative weight in the series belongs to \(\sE_{\bbeta,\bw}\).
	For \(k<m\), every nonzero series in the kernel therefore has least relative weight at least \(a_{k+1}\); hence the kernel of the coefficient map is \(F_{C,k+1}\).
	For \(k=m\), the same argument shows that the kernel is zero.

	Define \(W_C\) by \(W_C=\cV_{\bw}^{L}\cap F_{C,1}\), and define \(\cW_{P_k}\) to be the image of \(F_{C,k}\cap W_C\) in the quotient \eqref{eq:canonical-series-filtration-quotient}.
	Summing over the cosets gives the three isomorphisms in \eqref{eq:formal-filtration-graded}, and the filtrations of \(F_{C,1}\) and \(W_C\) give \(\dim_{\C}F_{C,1} = \sum_k\dim_{\C}\cE_{P_k}\) and \(\dim_{\C}W_C = \sum_k\dim_{\C}\cW_{P_k}\), whence the two identities in \eqref{eq:formal-filtration-dimensions} and, on subtracting, \eqref{eq:formal-filtration-exact-codimension}.

	For every \(\bv\in P_k\), the series whose initial coefficients belong to \(\cI_{\bv}^{L}\) lie in \(F_{C,k}\cap W_C\) and have images supported in the \(\bv\)-component, so \(\cI_{P_k}^{L}\subseteq\cW_{P_k}\).
	For \(k=1\), a series based at an exponent in \(P_j\) with \(j>1\) contains no term indexed by \(P_1\), and a series based at \(\bv\in P_1\) contains no \(\bx^{\bv'}\) with \(\bv'\in P_1\) distinct from \(\bv\), since such a term would be a nonzero shift of weight zero; hence the coefficient tuple of every element of \(W_C\) lies in \(\cI_{P_1}^{L}\) and \(\cW_{P_1}=\cI_{P_1}^{L}\), which proves \eqref{eq:formal-filtration-local-spaces}.
	The inclusion there gives \(\dim_{\C}(\cE_P/\cW_P) \leq \dim_{\C}(\cE_P/\cI_P^{L}) = \sum_{\bv\in P}\delta_{\bv}^{L}\) for every \(P\).
	Together with the equality for the least classes, this bound proves the two inequalities in \eqref{eq:formal-solution-codimension-bounds}.
	The descriptions of equality follow term by term from the same inclusions.
\end{proof}

We next describe the spaces \(\cW_P\) in terms of linear maps between finite-dimensional spaces.
Put \(\cP_{\bv,\cN}=\Phi_0\bigl(P_{\cN}^0(\bt)\bigr) \subseteq S_0\).
Thus \(\cP_{\bv,\cN}\) is the polynomial ideal whose completion is \(\Pi_{\cN}\) before the colon by \(m_{\bv,\cN}\) is taken.
The intrinsic perturbation construction in \cite[Theorem~3.2]{OS25} gives a linear map
\begin{equation}
	\sF_{\bv,\cN}:
	\cP_{\bv,\cN}^{\perp}
	\longrightarrow
	\cV_{\bw},
	\qquad
	q(\bpartial_{\bs})
	\longmapsto
	\left.
	q(\bpartial_{\bs})
	\Psi_{\cN}(\bx,\bs)
	\right|_{\bs=\bzero}.
	\label{eq:full-intrinsic-series-map}
\end{equation}
The image of \(\sF_{\bv,\cN}\) is finite-dimensional because it is contained in \(\cV_{\bw}\).
For a formal series \(\phi\), write \(\operatorname{coeff}_{\bv'}(\phi)\) for the polynomial in \(\log\bx\) that multiplies \(\bx^{\bv'}\).
If \(\bv'=\bv+\bu\) and \(I_{\bu}\in\cN\), the definition of the perturbation series gives
\begin{equation}
	\operatorname{coeff}_{\bv'}
	\bigl(\sF_{\bv,\cN}(q)\bigr)
	=
	\left.
	q(\bpartial_{\bs})
	\left(
	m_{\bv,\cN}(\bs)
	a_{\bu}(\bs)
	\exp\bigl((\log\bx)B\bs\bigr)
	\right)
	\right|_{\bs=\bzero}.
	\label{eq:full-intrinsic-transition-coefficient}
\end{equation}
The coefficient \(\operatorname{coeff}_{\bv'}\bigl(\sF_{\bv,\cN}(q)\bigr)\) is zero when \(\bv'=\bv+\bu\) with \(I_{\bu}\notin\cN\).

Fix an \(L\)-coset \(C\) that meets \(\sE_{\bbeta,\bw}\), write \(m(C)\) for the number of classes contained in \(C\), put \(m=m(C)\), and write these classes as \(P_1\prec_{\bw}\cdots\prec_{\bw}P_m\).
Choose \(\bv_C\in C\), and put \(a_k=\bw\cdot(\bv-\bv_C)\) for \(\bv\in P_k\).
Let \(F_{C,k}\) be the filtration of the canonical series supported on \(C\) that is constructed in the proof of Theorem~\ref{thm:formal-solution-codimension-bounds}.
That proof gives surjective coefficient maps
\begin{equation}
	\pi_{C,k}:F_{C,k}\longrightarrow
	\cE_{P_k},
	\qquad
	\ker\pi_{C,k}=F_{C,k+1}.
	\label{eq:formal-transition-quotient-map}
\end{equation}
Choose a linear section \(\sigma_{C,k}:\cE_{P_k}\to F_{C,k}\) of \(\pi_{C,k}\), and put
\begin{equation}
	\Lambda_C:
	\bigoplus_{k=1}^m\cE_{P_k}
	\longrightarrow F_{C,1},
	\qquad
	(p_1,\ldots,p_m)
	\longmapsto
	\sum_{k=1}^m\sigma_{C,k}(p_k).
	\label{eq:formal-transition-lifting-map}
\end{equation}
For \(1\leq k\leq m\), put
\begin{equation}
	\cD_{C,k}
	=
	\bigoplus_{\bv\in P_k}
	\bigoplus_{\cN\in\sO(\bv)}
	\im\sF_{\bv,\cN},
	\qquad
	\cD_C
	=
	\bigoplus_{k=1}^m\cD_{C,k},
	\label{eq:formal-transition-source}
\end{equation}
where the sums in \(\cD_{C,k}\) are external direct sums.
Let \(\Sigma_C:\cD_C\to F_{C,1}\) be the map that sends a tuple of components to the sum of the series that the components represent.
By \eqref{eq:full-intrinsic-transition-coefficient}, the blocks of \(\Lambda_C^{-1}\Sigma_C\) are computed from the coefficients at the finitely many exponents in \(C\) by successive subtraction of the images of the sections \(\sigma_{C,k}\).

Let \(F_1\supseteq F_2\supseteq\cdots\supseteq F_{m+1}=0\) be a finite filtration of a \(\C\)-vector space.
For \(1\leq k\leq m\), define \(\overline F_k\) by \(\overline F_k=F_k/F_{k+1}\), and let \(\pi_k:F_k\to\overline F_k\) be the quotient map.
Choose a linear section \(\sigma_k:\overline F_k\to F_k\) of \(\pi_k\) for every \(k\).
Define \(\Lambda:\bigoplus_{k=1}^m\overline F_k\to F_1\) by \(\Lambda(\overline f_1,\ldots,\overline f_m)=\sum_k\sigma_k(\overline f_k)\).
Let \(\mathsf X_1,\ldots,\mathsf X_m\) be \(\C\)-vector spaces, and define \(\mathsf X\) by \(\mathsf X=\bigoplus_{i=1}^m\mathsf X_i\).
Let \(\Sigma:\mathsf X\to F_1\) be a linear map with \(\Sigma(\mathsf X_i)\subseteq F_i\) for every \(i\).
Define \(\mathsf T\) by \(\mathsf T=\Lambda^{-1}\Sigma\) and \(\mathsf T_{ji}:\mathsf X_i\to\overline F_j\) by \(\mathsf T_{ji}=\pr_j\mathsf T|_{\mathsf X_i}\), where \(\pr_j\) denotes the projection onto \(\overline F_j\).
For \(k\geq2\), define \(\mathsf T_{<k}\) by \(\mathsf T_{<k}=(\pr_1,\ldots,\pr_{k-1})\mathsf T|_{\bigoplus_{i<k}\mathsf X_i}\) and \(\mathsf T_{k,<k}\) by \(\mathsf T_{k,<k}=\pr_k\mathsf T|_{\bigoplus_{i<k}\mathsf X_i}\).
For \(1\leq k\leq m\), define \(W_k\) to be the image of \(F_k\cap\im\Sigma\) under \(\pi_k\), so that \(\pi_k\) identifies \((F_k\cap\im\Sigma)/(F_{k+1}\cap\im\Sigma)\) with \(W_k\).

The following lemma describes the induced filtration on the image of a filtered map.
\begin{lemma}
	\label{lem:filtered-associated-graded-image}
	The map \(\Lambda\) is an isomorphism.
	Moreover, \(\mathsf T_{ji}=0\) for \(j<i\), and \(W_1=\im\mathsf T_{11}\) and \(W_k=\im\mathsf T_{kk}+\mathsf T_{k,<k}(\ker\mathsf T_{<k})\) for \(k\geq2\).
\end{lemma}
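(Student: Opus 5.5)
The plan is pure linear algebra on the filtration, treating the three assertions in order.

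\emph{\(\Lambda\) is an isomorphism.} Prove by descending induction on \(k\) that the restriction \(\Lambda_{\geq k}:\bigoplus_{j\geq k}\overline F_j\to F_k\), \((\overline f_j)\mapsto\sum_{j\geq k}\sigma_j(\overline f_j)\), is bijective. The case \(k=m+1\) is trivial. Given \(f\in F_k\), the element \(f-\sigma_k(\pi_k f)\) lies in \(\ker\pi_k=F_{k+1}\), and the inductive hypothesis writes it in the image of \(\Lambda_{\geq k+1}\); this gives surjectivity. For injectivity, if \(\sum_{j\geq k}\sigma_j(\overline f_j)=0\), apply \(\pi_k\). Since \(\sigma_j(\overline f_j)\in F_{j}\subseteq F_{k+1}=\ker\pi_k\) for \(j>k\), we get \(\overline f_k=0\), and induction finishes. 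Taking \(k=1\) gives the claim.

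\emph{Block triangularity.} For \(x\in\mathsf X_i\), \(\Sigma x\in F_i\), and \(\Lambda_{\geq i}\) maps onto \(F_i\). By uniqueness, \(\Lambda^{-1}\Sigma x\) has zero components in \(\overline F_j\) for \(j<i\), so \(\mathsf T_{ji}=0\) for \(j<i\). The same argument yields a key identity. For \(f\in F_1\), \(f\in F_k\) if and only if \(\pr_j\Lambda^{-1}f=0\) for all \(j<k\), and in that case \(\pi_k f=\pr_k\Lambda^{-1}f\). The ``if'' direction holds because \(f=\sum_{j\geq k}\sigma_j(\cdot)\in F_k\). The ``only if'' direction and the formula for \(\pi_k f\) follow from injectivity together with the decomposition of \(F_k\) via \(\Lambda_{\geq k}\) and \(\pi_k\sigma_j=0\) for \(j>k\).

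\emph{Computation of \(W_k\).} Using the identity, \(F_k\cap\im\Sigma\) consists of those \(\Sigma x\) with \(\pr_j\mathsf Tx=0\) for \(j<k\), and then \(W_k=\{\pr_k\mathsf Tx\mid \pr_j\mathsf Tx=0\ (j<k)\}\). Write \(x=x_{<k}+x_k+x_{>k}\) with \(x_{<k}\in\bigoplus_{i<k}\mathsf X_i\). By triangularity, \(x_k\) and \(x_{>k}\) contribute nothing to \(\pr_j\mathsf T\) for \(j<k\), and \(x_{>k}\) contributes nothing to \(\pr_k\mathsf T\). Therefore the condition reads \(\mathsf T_{<k}x_{<k}=0\), and the value is \(\mathsf T_{k,<k}x_{<k}+\mathsf T_{kk}x_k\), with \(x_k\) arbitrary. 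This yields \(W_k=\im\mathsf T_{kk}+\mathsf T_{k,<k}(\ker\mathsf T_{<k})\) for \(k\geq2\) and \(W_1=\im\mathsf T_{11}\).

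The main care point is the characterization of \(F_k\) through \(\Lambda^{-1}\), which rests entirely on the filtration being exhausted by the sections. The rest is bookkeeping.
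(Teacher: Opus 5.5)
Your proposal is correct and takes essentially the same route as the paper: the paper also obtains surjectivity by successively subtracting \(\sigma_k(\pi_k(\cdot))\), and proves injectivity and \(\Lambda^{-1}(F_k)=\bigoplus_{j\geq k}\overline F_j\) by successive reduction modulo \(F_2,\ldots,F_k\). It then reads off \(\mathsf T_{ji}=0\) for \(j<i\) and the formula for \(W_k\) from block triangularity exactly as you do, and your descending induction on the restrictions \(\Lambda_{\geq k}\) only repackages that identity.
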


\begin{proof}
	Take \(\widetilde f\in F_1\), define \(\widetilde f_1\) by \(\widetilde f_1=\widetilde f\), and for \(1\leq k\leq m\) define \(\overline f_k\) by \(\overline f_k=\pi_k(\widetilde f_k)\) and \(\widetilde f_{k+1}\) by \(\widetilde f_{k+1}=\widetilde f_k-\sigma_k(\overline f_k)\).
	Then \(\widetilde f_{k+1}\in F_{k+1}\) and \(\widetilde f_{m+1}=0\), so \(\widetilde f=\sum_k\sigma_k(\overline f_k)\) and \(\Lambda\) is surjective.
	If \(\Lambda(\overline f_1,\ldots,\overline f_m)=0\), reduction modulo \(F_2\) gives \(\overline f_1=0\), and after \(\overline f_1,\ldots,\overline f_{k-1}\) have been shown to be zero, reduction modulo \(F_{k+1}\) gives \(\overline f_k=0\); thus \(\Lambda\) is injective.
	Since \(\sigma_j(\overline F_j)\subseteq F_j\), the map \(\Lambda\) sends \(\bigoplus_{j\geq k}\overline F_j\) into \(F_k\), and successive reduction modulo \(F_2,\ldots,F_k\) gives the reverse inclusion, so \(\Lambda^{-1}(F_k)=\bigoplus_{j\geq k}\overline F_j\).
	The inclusion \(\Sigma(\mathsf X_i)\subseteq F_i\) now gives \(\mathsf T(\mathsf X_i)\subseteq\bigoplus_{j\geq i}\overline F_j\), so \(\mathsf T_{ji}=0\) for \(j<i\).
	For an element of \(\bigoplus_{j\geq k}\overline F_j\), the map \(\pi_k\Lambda\) is the projection onto \(\overline F_k\), so \(W_k=\pr_k(\im\mathsf T\cap\bigoplus_{j\geq k}\overline F_j)\).
	Choose \(\xi_i\in\mathsf X_i\) for \(1\leq i\leq m\).
	The first \(k-1\) coordinates of \(\mathsf T(\xi_1,\ldots,\xi_m)\) are \(\mathsf T_{<k}(\xi_1,\ldots,\xi_{k-1})\), because the components \(\xi_k,\ldots,\xi_m\) have zero coordinates in \(\overline F_1,\ldots,\overline F_{k-1}\).
	Hence \(\mathsf T(\xi_1,\ldots,\xi_m)\in\bigoplus_{j\geq k}\overline F_j\) if and only if \((\xi_1,\ldots,\xi_{k-1})\in\ker\mathsf T_{<k}\), and under this condition the \(k\)-th coordinate of \(\mathsf T(\xi_1,\ldots,\xi_m)\) is \(\mathsf T_{k,<k}(\xi_1,\ldots,\xi_{k-1})+\mathsf T_{kk}(\xi_k)\), because the components \(\xi_{k+1},\ldots,\xi_m\) have zero coordinate in \(\overline F_k\).
	This proves the formula for \(W_k\) when \(k\geq2\), and for \(k=1\) lower triangularity gives \(\pr_1\mathsf T(\xi_1,\ldots,\xi_m)=\mathsf T_{11}(\xi_1)\).
\end{proof}

Each component of \(\cD_{C,i}\) represents a canonical series obtained by intrinsic perturbation and based at an exponent in \(P_i\); this series belongs to \(F_{C,i}\) and every nonzero shift in it has positive \(\bw\)-weight, so \(\Sigma_C(\cD_{C,i})\subseteq F_{C,i}\).
The inclusions \(\Sigma_C(\cD_{C,i})\subseteq F_{C,i}\) allow us to apply Lemma~\ref{lem:filtered-associated-graded-image} with \(F_k=F_{C,k}\), \(\overline F_k=\cE_{P_k}\), \(\mathsf X_i=\cD_{C,i}\), and \(\Sigma=\Sigma_C\).
In this application, the map \(\Lambda\) is \(\Lambda_C\), so \(\Lambda_C\) is an isomorphism.
Define
\begin{equation}
	\mathsf T_C
	=
	\Lambda_C^{-1}\Sigma_C:
	\cD_C
	\longrightarrow
	\bigoplus_{k=1}^m\cE_{P_k}.
	\label{eq:formal-transition-map}
\end{equation}
For the application of Lemma~\ref{lem:filtered-associated-graded-image} above, the map \(\mathsf T\) is \(\mathsf T_C\).
Let \(\mathsf T_{C,ji}:\cD_{C,i}\to\cE_{P_j}\) be the \((j,i)\)-block of \(\mathsf T_C\).
For \(k\geq2\), define
\begin{align}
	\mathsf T_{C,<k}
	&=
	(\pr_1,\ldots,\pr_{k-1})
	\mathsf T_C
	\big|_{\bigoplus_{i<k}\cD_{C,i}},
	\label{eq:formal-transition-preceding-map}\\
	\mathsf T_{C,k,<k}
	&=
	\pr_k
	\mathsf T_C
	\big|_{\bigoplus_{i<k}\cD_{C,i}}.
	\label{eq:formal-transition-current-map}
\end{align}

The following theorem gives the cokernel of \(\mathsf T_C\) and the images in the associated graded spaces.

\begin{theorem}
	\label{thm:formal-solution-exact-cokernel}
	There is an exact sequence
	\begin{equation}
		\cD_C
		\xrightarrow{\mathsf T_C}
		\bigoplus_{k=1}^m\cE_{P_k}
		\longrightarrow
		\frac{F_{C,1}}
		{F_{C,1}\cap\cV_{\bw}^{L}}
		\longrightarrow0.
		\label{eq:formal-transition-cokernel}
	\end{equation}
	The maps \(\mathsf T_{C,ji}\) satisfy
	\begin{equation}
		\mathsf T_{C,ji}=0\quad(j<i),
		\qquad
		\im\mathsf T_{C,ii}
		=
		\cI_{P_i}^{L}.
		\label{eq:formal-transition-diagonal}
	\end{equation}
	The images in the associated graded spaces satisfy
	\begin{equation}
		\cW_{P_k}
		=
		\cI_{P_k}^{L}
		+
		\mathsf T_{C,k,<k}
		\bigl(\ker\mathsf T_{C,<k}\bigr)
		\qquad(k\geq2),
		\label{eq:formal-transition-associated-graded-image}
	\end{equation}
	and \(\cW_{P_1}=\cI_{P_1}^{L}\).
\end{theorem}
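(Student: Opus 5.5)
The plan is to read everything off Lemma~\ref{lem:filtered-associated-graded-image}, applied as set up just before the statement: \(F_k=F_{C,k}\), \(\overline F_k=\cE_{P_k}\), \(\mathsf X_i=\cD_{C,i}\), and \(\Sigma=\Sigma_C\). The lemma already makes \(\Lambda_C\) an isomorphism and gives \(\mathsf T_{C,ji}=0\) for \(j<i\). So three things remain: exactness of \eqref{eq:formal-transition-cokernel}, the diagonal identity \(\im\mathsf T_{C,ii}=\cI_{P_i}^{L}\), and the formula \eqref{eq:formal-transition-associated-graded-image}.

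For exactness, I would first check that \(\im\Sigma_C=F_{C,1}\cap\cV_{\bw}^{L}\).

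\begin{itemize}
\item The inclusion \(\subseteq\) holds because every component of \(\cD_C\) is an intrinsic perturbation series based at an exponent of \(C\).
\item For \(\supseteq\), note that \(\cV_{\bw}^{L}\) is spanned by the images \(\im\sF_{\bv,\cN}\) as \(\bv\) and \(\cN\in\sO(\bv)\) vary. A series based at \(\bv\) is supported on \(\bv+L\).
\item As in the proof of Theorem~\ref{thm:formal-solution-codimension-bounds}, \(\cV_{\bw}^{L}\) decomposes as a direct sum over \(L\)-cosets, and \(F_{C,1}\) is exactly the summand supported on \(C\).
\item Hence \(F_{C,1}\cap\cV_{\bw}^{L}\) is spanned by the images \(\im\sF_{\bv,\cN}\) with \(\bv\in C\), which is \(\im\Sigma_C\).
\end{itemize}

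Since \(\Lambda_C\) is an isomorphism onto \(F_{C,1}\), composing the quotient \(F_{C,1}\to F_{C,1}/\im\Sigma_C\) with \(\Lambda_C\) gives a surjection from \(\bigoplus_k\cE_{P_k}\) whose kernel is \(\Lambda_C^{-1}(\im\Sigma_C)=\im\mathsf T_C\). This yields exactness.

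For the diagonal block, I would use the fact that for \(\xi\in\cD_{C,i}\) the coordinate \(\pr_i\mathsf T_C(\xi)\) equals \(\pi_{C,i}\Sigma_C(\xi)\). Indeed, \(\Lambda_C^{-1}\) maps \(F_{C,i}\) into \(\bigoplus_{j\geq i}\cE_{P_j}\), and there \(\pi_{C,i}\Lambda_C\) is the projection, as shown in the proof of the lemma. So \(\pr_i\mathsf T_C(\xi)\) is the tuple of coefficients at the exponents of \(P_i\).

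\begin{itemize}
\item Take a component in \(\im\sF_{\bv,\cN}\) with \(\bv\in P_i\). Its coefficient at \(\bx^{\bv}\) lies in \(V^{\intr}_{\bv,\cN}\), by the definition of the intrinsic coefficient space via \cite[Theorem~3.2]{OS25}. Its coefficient at any other \(\bv'\in P_i\) is zero, since \(\bv'-\bv\) would be a nonzero shift of weight zero, while canonical series have only positive-weight nonzero shifts.
\item Conversely, every element of \(V^{\intr}_{\bv,\cN}\) arises as such a coefficient, again by the definition of the intrinsic coefficient space.
\item Summing over \(\bv\in P_i\) and \(\cN\in\sO(\bv)\) gives \(\im\mathsf T_{C,ii}=\bigoplus_{\bv\in P_i}\cI_{\bv}^{L}=\cI_{P_i}^{L}\).
\end{itemize}

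The main point to verify here is that the image of \(\sF_{\bv,\cN}\) at \(\bx^{\bv}\) is exactly \(V^{\intr}_{\bv,\cN}\). This requires the identification in Fact~\ref{fact:defect-coefficient-duality} together with the identification of logarithmic coefficients with lattice coordinates. I expect this to be the only nonformal step, and I would cite the intrinsic-coefficient-space definition directly.

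Finally, \(W_k\) in the lemma is the image of \(F_{C,k}\cap\im\Sigma_C=F_{C,k}\cap\cV_{\bw}^{L}\), which is \(\cW_{P_k}\) by the definition of the induced filtration in Theorem~\ref{thm:formal-solution-codimension-bounds}. The lemma's formula for \(W_k\), combined with the diagonal identity, then gives \eqref{eq:formal-transition-associated-graded-image} and \(\cW_{P_1}=\cI_{P_1}^{L}\).
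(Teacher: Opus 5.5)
Your proposal is correct and follows essentially the same route as the paper. You apply Lemma~\ref{lem:filtered-associated-graded-image} using \(\im\Sigma_C=F_{C,1}\cap\cV_{\bw}^{L}\), read off the diagonal blocks as coefficient maps at the exponents of \(P_i\) that land in the intrinsic coefficient spaces, and identify \(W_k\) with \(\cW_{P_k}\); your coset-decomposition argument for \(\im\Sigma_C=F_{C,1}\cap\cV_{\bw}^{L}\) and your explicit vanishing at the other exponents of \(P_i\) spell out what the paper states without detail.
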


\begin{proof}
	Since \(\im\Sigma_C=F_{C,1}\cap\cV_{\bw}^{L}\) and \(\Lambda_C\) is an isomorphism, the cokernel of \(\mathsf T_C\) is \(F_{C,1}/(F_{C,1}\cap\cV_{\bw}^{L})\), which gives \eqref{eq:formal-transition-cokernel}.
	Lemma~\ref{lem:filtered-associated-graded-image} gives \(\mathsf T_{C,ji}=0\) for \(j<i\).
	For a component represented by a series based at \(\bv\in P_i\), the \(i\)-th coordinate under \(\Lambda_C^{-1}\) is its coefficient at \(\bv\), and by the definition of the intrinsic coefficient spaces these coefficients span \(\cI_{P_i}^{L}\) as \(\bv\) ranges over \(P_i\) and \(\cN\) over the ordered families, so \(\im\mathsf T_{C,ii} = \cI_{P_i}^{L}\), which proves \eqref{eq:formal-transition-diagonal}.
	The subspace \(W_k\) of the lemma is \(\cW_{P_k}\) and its maps \(\mathsf T_{<k}\) and \(\mathsf T_{k,<k}\) are \(\mathsf T_{C,<k}\) and \(\mathsf T_{C,k,<k}\), so the lemma and \eqref{eq:formal-transition-diagonal} give \eqref{eq:formal-transition-associated-graded-image} for \(k\geq2\) and \(\cW_{P_1}=\im\mathsf T_{C,11}=\cI_{P_1}^{L}\) for \(k=1\).
\end{proof}

Put \(\delta_{P_k}^{L} = \sum_{\bv\in P_k}\delta_{\bv}^{L}\), and define \(\tau_{C,1}=0\) and
\begin{equation}
	\tau_{C,k}
	=
	\dim_{\C}
	\frac{
		\cI_{P_k}^{L}
		+
		\mathsf T_{C,k,<k}(\ker\mathsf T_{C,<k})}
	{\cI_{P_k}^{L}}
	\qquad(k\geq2).
	\label{eq:formal-transition-correction}
\end{equation}

The preceding theorem gives a codimension formula.

\begin{corollary}
	\label{cor:formal-solution-exact-codimension}
	We have \(0\leq\tau_{C,k}\leq\delta_{P_k}^{L}\), the number \(\tau_{C,k}\) is independent of the sections in \eqref{eq:formal-transition-lifting-map}, and
	\begin{equation}
		\dim_{\C}
		\left(
		\frac{\cV_{\bw}}
		{\cV_{\bw}^{L}}
		\right)
		=
		\sum_{\bv\in\sE_{\bbeta,\bw}}
		\delta_{\bv}^{L}
		-
		\sum_C\sum_{k=2}^{m(C)}\tau_{C,k}.
		\label{eq:formal-solution-exact-transition-codimension}
	\end{equation}
	Equivalently, the contribution of \(C\) to the codimension in \eqref{eq:formal-solution-exact-transition-codimension} is
	\begin{equation}
		\sum_{k=1}^{m(C)}\dim_{\C}\cE_{P_k}
		-
		\rank\mathsf T_C.
		\label{eq:formal-solution-cokernel-rank}
	\end{equation}
	The canonical series obtained by intrinsic perturbation span \(\cV_{\bw}\) if and only if, for every \(L\)-coset \(C\), we have \(\delta_{P_1}^{L}=0\) and \(\tau_{C,k}=\delta_{P_k}^{L}\) for every \(k\geq2\).
	Equality holds in the upper bound in \eqref{eq:formal-solution-codimension-bounds} if and only if \(\tau_{C,k}=0\) for every \(C\) and \(k\geq2\).
	Equality holds in the lower bound if and only if \(\tau_{C,k}=\delta_{P_k}^{L}\) for every \(C\) and \(k\geq2\).
\end{corollary}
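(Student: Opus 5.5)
The plan is to read everything off Theorem~\ref{thm:formal-solution-exact-cokernel} and Theorem~\ref{thm:formal-solution-codimension-bounds}, working one coset \(C\) at a time and then summing.

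\textbf{Bounds on \(\tau_{C,k}\).} By \eqref{eq:formal-transition-associated-graded-image},
\[
\tau_{C,k}=\dim_{\C}\bigl(\cW_{P_k}/\cI_{P_k}^{L}\bigr).
\]
This gives \(\tau_{C,k}\ge0\). The inclusions \(\cI_{P_k}^{L}\subseteq\cW_{P_k}\subseteq\cE_{P_k}\) give \(\tau_{C,k}\le\dim_{\C}(\cE_{P_k}/\cI_{P_k}^{L})=\delta_{P_k}^{L}\). The last equality holds because \(\cE_{P_k}\) and \(\cI_{P_k}^{L}\) are direct sums over \(\bv\in P_k\) of \(\cE_{\bv}\supseteq\cI_{\bv}^{L}\), by Proposition~\ref{prop:all-ordered-local-quotient}.

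\textbf{Independence of the sections.} The space \(\cW_{P_k}\) is defined intrinsically, as the image of \(F_{C,k}\cap\cV_{\bw}^{L}\) under \(\pi_{C,k}\). Neither \(\cW_{P_k}\) nor \(\cI_{P_k}^{L}\) involves the sections \(\sigma_{C,k}\), so \(\tau_{C,k}\) is independent of them, even though \(\mathsf T_C\) itself depends on them. One check is needed: \(F_{C,k}\cap W_C\) must agree with \(F_{C,k}\cap\im\Sigma_C\). This holds because \(\im\Sigma_C=F_{C,1}\cap\cV_{\bw}^{L}=W_C\). That identity uses that \(\cV_{\bw}^{L}\) splits over cosets and that the components of \(\cD_C\) are exactly the intrinsic canonical series based in \(C\).

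\textbf{The codimension formula.} Write \(\dim_{\C}(\cE_{P_k}/\cW_{P_k})=\delta_{P_k}^{L}-\tau_{C,k}\), where \(\tau_{C,1}=0\) because \(\cW_{P_1}=\cI_{P_1}^{L}\). Substitute this into \eqref{eq:formal-filtration-exact-codimension} and sum over the classes, noting that every \(\bv\in\sE_{\bbeta,\bw}\) lies in exactly one class of exactly one coset. This yields \eqref{eq:formal-solution-exact-transition-codimension}.

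For the rank form \eqref{eq:formal-solution-cokernel-rank}, use the exact sequence \eqref{eq:formal-transition-cokernel}. The contribution of \(C\) is \(\dim_{\C}F_{C,1}/(F_{C,1}\cap\cV_{\bw}^{L})\), which equals \(\sum_k\dim_{\C}\cE_{P_k}-\rank\mathsf T_C\) since the codomain is finite-dimensional. This agrees with the sum of \(\dim_{\C}(\cE_{P_k}/\cW_{P_k})\) over \(k\), because \(\dim_{\C} W_C=\sum_k\dim_{\C}\cW_{P_k}\) by Lemma~\ref{lem:filtered-associated-graded-image}. The dimension of \(\cD_C\) might be a concern, but \(\rank\mathsf T_C=\dim_{\C}\im\Sigma_C\) is finite in any case.

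\textbf{Equality characterizations.} Every term \(\delta_{P_k}^{L}-\tau_{C,k}\) is nonnegative, so \(\cV_{\bw}^{L}=\cV_{\bw}\) holds if and only if each term vanishes. That means \(\delta_{P_1}^{L}=0\) and \(\tau_{C,k}=\delta_{P_k}^{L}\) for \(k\ge2\).

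Compared with the upper bound \(\sum_{\bv}\delta_{\bv}^{L}\), the codimension differs by \(\sum\tau_{C,k}\), a sum of nonnegative terms, so equality holds if and only if all \(\tau_{C,k}=0\).

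The lower bound in \eqref{eq:formal-solution-codimension-bounds} is \(\sum_C\delta_{P_1}^{L}\). The codimension exceeds it by \(\sum_C\sum_{k\ge2}(\delta_{P_k}^{L}-\tau_{C,k})\), again a sum of nonnegative terms, so equality holds if and only if \(\tau_{C,k}=\delta_{P_k}^{L}\) for all \(k\ge2\).

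\textbf{Main obstacle.} The only delicate step is the independence from the sections, that is, confirming that \(\cW_{P_k}\) is genuinely intrinsic. I would settle this first by the identification \(\im\Sigma_C=W_C\) described above.
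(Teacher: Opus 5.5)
Your proposal is correct and follows the paper's proof. Both read off \(\tau_{C,k}=\dim_{\C}(\cW_{P_k}/\cI_{P_k}^{L})\) from Theorem~\ref{thm:formal-solution-exact-cokernel}, deduce independence of the sections and \(\dim_{\C}(\cE_{P_k}/\cW_{P_k})=\delta_{P_k}^{L}-\tau_{C,k}\), sum via \eqref{eq:formal-filtration-exact-codimension}, use the exact sequence \eqref{eq:formal-transition-cokernel} for the rank form, and obtain the three equivalences term by term. You additionally spell out details the paper leaves implicit, such as the identification \(\im\Sigma_C=F_{C,1}\cap\cV_{\bw}^{L}\) and the bound \(\tau_{C,k}\leq\delta_{P_k}^{L}\) coming from \(\cW_{P_k}\subseteq\cE_{P_k}\).
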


\begin{proof}
	Theorem~\ref{thm:formal-solution-exact-cokernel} gives \(\tau_{C,k}=\dim_{\C}(\cW_{P_k}/\cI_{P_k}^{L})\), which proves the independence assertion and yields \(\dim_{\C}(\cE_{P_k}/\cW_{P_k})=\delta_{P_k}^{L}-\tau_{C,k}\).
	Summing over the classes and using \eqref{eq:formal-filtration-exact-codimension} proves \eqref{eq:formal-solution-exact-transition-codimension}, the exact sequence \eqref{eq:formal-transition-cokernel} gives \eqref{eq:formal-solution-cokernel-rank}, and the three equivalences follow term by term.
\end{proof}

\begin{remark}
	For \(k\geq2\), the number \(\tau_{C,k}\) is the dimension of the image of \(\mathsf T_{C,k,<k}(\ker\mathsf T_{C,<k})\) in \(\cE_{P_k}/\cI_{P_k}^{L}\).
	Equivalently,
	\[
		\tau_{C,k}
		=
		\dim_{\C}
		\frac{
			\mathsf T_{C,k,<k}(\ker\mathsf T_{C,<k})}
		{
			\mathsf T_{C,k,<k}(\ker\mathsf T_{C,<k})
			\cap
			\cI_{P_k}^{L}}.
	\]
	The number \(\tau_{C,k}\) is the dimension, modulo \(\cI_{P_k}^{L}\), of the space of coefficient tuples indexed by \(P_k\) that come from certain linear combinations of canonical series obtained by intrinsic perturbation.
	These combinations are based at exponents in the classes \(P_i\) with \(i<k\), and their coordinates indexed by \(P_1,\ldots,P_{k-1}\) vanish.
	Theorem~\ref{thm:formal-solution-exact-cokernel} identifies the displayed quotient with \(\cW_{P_k}/\cI_{P_k}^{L}\), so \(\tau_{C,k}\) does not depend on the sections \(\sigma_{C,i}\).
\end{remark}

\begin{remark}
	The formula in \eqref{eq:formal-solution-cokernel-rank} is exact, but it is not a closed combinatorial formula.
	To evaluate the formula, we compute the finitely many coefficients at exponents that determine the blocks of \(\mathsf T_C=\Lambda_C^{-1}\Sigma_C\) and then compute \(\rank\mathsf T_C\) for each \(L\)-coset \(C\).
	Thus the formula gives a finite computation in linear algebra.
\end{remark}

Thus, under the standing homogeneity assumption, the finite-dimensional maps \(\mathsf T_C\) determine both the exact codimension and whether the canonical series obtained by intrinsic perturbation span \(\cV_{\bw}\).

\begin{corollary}
	\label{cor:formal-solution-completeness}
	If \(\delta_{\bv}^{L}=0\) for every \(\bv\in\sE_{\bbeta,\bw}\), then \(\cV_{\bw}^{L}=\cV_{\bw}\).
	If \(\delta_{\bv}^{L}>0\) for some \(P\in\sP_{\bbeta,\bw}^{\min}\) and some \(\bv\in P\), then the inclusion \(\cV_{\bw}^{L}\subseteq\cV_{\bw}\) is strict.
	If \(\delta_{\bv}^{L}=0\) whenever \(\bv\in P\) and \(P\notin\sP_{\bbeta,\bw}^{\min}\), then
	\[
		\dim_{\C}
		\left(
		\cV_{\bw}/\cV_{\bw}^{L}
		\right)
		=
		\sum_{P\in\sP_{\bbeta,\bw}^{\min}}
		\sum_{\bv\in P}
		\dim_{\C}
		\left(
		\frac{J_{\bv}^{L}}
		{\cE_{\bv}^{\perp}}
		\right).
	\]
\end{corollary}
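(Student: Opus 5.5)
The plan is to deduce all three assertions from the two-sided bound \eqref{eq:formal-solution-codimension-bounds} of Theorem~\ref{thm:formal-solution-codimension-bounds}, together with Proposition~\ref{prop:all-ordered-local-quotient} for the final formula. Everything needed has already been established, so the proof reduces to reading off the inequalities. The steps are as follows.

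First, suppose \(\delta_{\bv}^{L}=0\) for every \(\bv\in\sE_{\bbeta,\bw}\). Then the upper bound in \eqref{eq:formal-solution-codimension-bounds} is zero, so \(\dim_{\C}(\cV_{\bw}/\cV_{\bw}^{L})=0\). Since \(\cV_{\bw}\) is finite-dimensional by \eqref{eq:formal-filtration-dimensions}, this gives \(\cV_{\bw}^{L}=\cV_{\bw}\).

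Second, suppose \(\delta_{\bv}^{L}>0\) for some \(\bv\) in a least class \(P\in\sP_{\bbeta,\bw}^{\min}\). All \(\delta\)'s are nonnegative, so the lower bound in \eqref{eq:formal-solution-codimension-bounds} is at least \(\delta_{\bv}^{L}>0\). Hence the quotient \(\cV_{\bw}/\cV_{\bw}^{L}\) is nonzero, and the inclusion \(\cV_{\bw}^{L}\subseteq\cV_{\bw}\) is strict.

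Third, suppose \(\delta_{\bv}^{L}=0\) whenever \(\bv\) lies in a class outside \(\sP_{\bbeta,\bw}^{\min}\). Splitting \(\sE_{\bbeta,\bw}\) as the disjoint union of the classes, the upper sum \(\sum_{\bv\in\sE_{\bbeta,\bw}}\delta_{\bv}^{L}\) equals the lower sum \(\sum_{P\in\sP_{\bbeta,\bw}^{\min}}\sum_{\bv\in P}\delta_{\bv}^{L}\). The two bounds therefore coincide and give the codimension. Finally, substitute \(\delta_{\bv}^{L}=\dim_{\C}(J_{\bv}^{L}/\cE_{\bv}^{\perp})\) from \eqref{eq:all-ordered-local-ideal-codimension} to obtain the displayed formula.

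There is no real obstacle. The only point to check is that the classes partition \(\sE_{\bbeta,\bw}\), so that the two sums are compared over the same index set. This holds because \(\sim_{\bw}\) is an equivalence relation on \(\sE_{\bbeta,\bw}\). Alternatively, the third claim follows from Corollary~\ref{cor:formal-solution-exact-codimension} using \(0\leq\tau_{C,k}\leq\delta_{P_k}^{L}=0\) for \(k\geq2\).
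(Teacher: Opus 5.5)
Your proposal is correct and follows the paper's own proof, which likewise reads all three assertions off the two-sided bound \eqref{eq:formal-solution-codimension-bounds} of Theorem~\ref{thm:formal-solution-codimension-bounds} and then substitutes \(\delta_{\bv}^{L}=\dim_{\C}(J_{\bv}^{L}/\cE_{\bv}^{\perp})\) from Proposition~\ref{prop:all-ordered-local-quotient}. One small remark: in the first step you do not need finite-dimensionality, since a zero quotient \(\cV_{\bw}/\cV_{\bw}^{L}\) already forces \(\cV_{\bw}^{L}=\cV_{\bw}\).
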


\begin{proof}
	All three assertions follow from Theorem~\ref{thm:formal-solution-codimension-bounds} and Proposition~\ref{prop:all-ordered-local-quotient}.
\end{proof}

We first give a definition used in the lemma on finite linear combinations of basic Nilsson solutions.

\begin{definition}
	\label{def:coefficientwise-absolute-convergence}
	Let \(\cU\subseteq\C^n\) be an open set in logarithmic coordinates.
	Suppose that, on a fixed branch of the logarithms, a formal series has the form \(\phi(e^{\by}) = \sum_{\balpha\in S}e^{\balpha\cdot\by}p_{\balpha}(\by)\) and \(p_{\balpha}(\by) = \sum_{|\bk|\leq M}c_{\balpha,\bk}\by^{\bk}\), where the exponents in \(S\) are distinct and \(M\) is independent of \(\balpha\).
	For a series in this form, write \(\supp(\phi)=\{\balpha\in S\mid p_{\balpha}\ne0\}\).
	We say that the series is coefficientwise absolutely convergent on \(\cU\) if \(\sum_{\balpha\in S} \sum_{|\bk|\leq M} \bigl|c_{\balpha,\bk}e^{\balpha\cdot\by}\bigr| < \infty\) for \(\by\in\cU\).
	This condition is pointwise in \(\by\), and it does not require uniform convergence on compact subsets of \(\cU\).
\end{definition}

\begin{lemma}
	\label{lem:nilsson-asymptotic-injectivity}
	Let \(\phi\) be a finite linear combination of basic Nilsson solutions on a fixed branch of the logarithms, and let \(M\) be a nonnegative integer bounding the degrees of their logarithmic coefficients.
	Let \(\bw''\) be an integral vector in the interior of a strongly convex open cone whose dual contains the supports of these solutions.
	Let \(\cU\subseteq\C^n\) be an open logarithmic coordinate domain on which the coordinatewise exponential map is injective.
	Suppose that \(\phi\) is coefficientwise absolutely convergent on \(\cU\).
	Let \(\cB\subseteq\C^n\) be a nonempty open set, and suppose that a positive number \(t_0\) satisfies \(\bzeta+\bw''\log t\in\cU\) for \(\bzeta\in\cB,\ 0<t<t_0\).
	For \(\bzeta\in\cB\), put \(\bz=e^{\bzeta}\) and define the curve \(\bx(t)\) by \(x_j(t)=z_jt^{w_j''}\) for \(0<t<t_0\).
	Let \(\mu\) be the least real \(\bw''\)-weight of an exponent of \(\phi\), define \(\psi\) to be the finite sum of the terms whose exponents have real \(\bw''\)-weight \(\mu\), and suppose that every other exponent has real \(\bw''\)-weight at least \(\mu+\varepsilon\) for a positive number \(\varepsilon\).
	Then, for every \(\bzeta\in\cB\) and every \(t_*\in(0,t_0)\), \(t^{-\mu}\bigl(\phi(\bx(t))-\psi(\bx(t))\bigr) = O\bigl(t^{\varepsilon}(1+|\log t|)^M\bigr)\) as \(t\to0^+\) with \(0<t\leq t_*\).
	If \(\psi\ne0\), there is a choice of \(\bzeta\in\cB\), and hence of \(\bz=e^{\bzeta}\), for which the sum of \(\phi\) is not identically zero on the curve \(\bx(t)\).
\end{lemma}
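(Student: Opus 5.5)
Substitute the curve into each term and estimate the tail with the coefficientwise absolute convergence; then prove the nonvanishing claim from the asymptotics together with a uniqueness argument for finite exponential sums in \(\bzeta\).

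For \(\balpha\in\supp(\phi)\) and \(\by=\bzeta+\bw''\log t\) we have
\[
	e^{\balpha\cdot\by}p_{\balpha}(\by)
	=
	t^{\balpha\cdot\bw''}e^{\balpha\cdot\bzeta}p_{\balpha}(\bzeta+\bw''\log t).
\]
The modulus of \(t^{\balpha\cdot\bw''}\) is \(t^{\operatorname{Re}(\balpha\cdot\bw'')}\).
Fix \(\bzeta\in\cB\) and \(t_*\in(0,t_0)\), and put \(\by_*=\bzeta+\bw''\log t_*\in\cU\).
For \(0<t\leq t_*\) and every \(\balpha\ne\) a weight-\(\mu\) exponent, we have \(\operatorname{Re}(\balpha\cdot\bw'')\geq\mu+\varepsilon\), and hence
\[
	\bigl|t^{\balpha\cdot\bw''}\bigr|
	=
	\bigl|t_*^{\balpha\cdot\bw''}\bigr|\,(t/t_*)^{\operatorname{Re}(\balpha\cdot\bw'')}
	\leq
	\bigl|t_*^{\balpha\cdot\bw''}\bigr|\,(t/t_*)^{\mu+\varepsilon}.
\]
Next expand \(p_{\balpha}(\bzeta+\bw''\log t)\) in powers of \(\log t-\log t_*\).
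Each monomial \(c_{\balpha,\bk}\by^{\bk}\), evaluated at \(\by=\by_*+\bw''(\log t-\log t_*)\), is bounded by \(C\sum_{\bk'\leq\bk}|c_{\balpha,\bk}||\by_*^{\bk'}|\cdot(1+|\log t|)^M\).
This step needs care, since absolute convergence is assumed only for the monomials \(c_{\balpha,\bk}\by_*^{\bk}\) and not for the partial derivatives.

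This is the main obstacle. I would handle it by using absolute convergence at finitely many nearby points of \(\cU\), which is open. Choose points \(\by_*+\boldsymbol\eta_l\) in \(\cU\) such that each polynomial \(\by^{\bk'}\) with \(|\bk'|\leq M\) is a fixed linear combination of the values \(e^{-\balpha\cdot\boldsymbol\eta_l}\) times the shifted monomials. Alternatively, use Cauchy estimates in a small polydisc, after shrinking the radius so that \(|e^{\balpha\cdot\boldsymbol\eta}|\) is controlled via \(\operatorname{Re}\). The simplest version of this idea is to choose shifts \(\boldsymbol\eta\) that are real multiples of directions keeping \(\operatorname{Re}(\balpha\cdot\boldsymbol\eta)\) bounded below, or small imaginary shifts, which leave the moduli \(|e^{\balpha\cdot\by}|\) unchanged. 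Then Vandermonde-type interpolation in the polynomial variable bounds \(\sum|c_{\balpha,\bk}||\by_*|^{\bk}\)-type quantities uniformly.

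Summing over \(\balpha\) then gives
\[
	\bigl|\phi(\bx(t))-\psi(\bx(t))\bigr|
	\leq
	C'\,t^{\mu+\varepsilon}(1+|\log t|)^M,
\]
which is the first assertion. The same estimate justifies evaluating the sum termwise along the curve.

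For the second assertion, write \(\psi(\bx(t))=t^{\mu}\sum_{\balpha}t^{i\operatorname{Im}(\balpha\cdot\bw'')}e^{\balpha\cdot\bzeta}p_{\balpha}(\bzeta+\bw''\log t)\), a finite sum. Since \(\psi\ne0\) and the exponents \(\balpha\) are distinct, the finite sum \(\sum_{\balpha}e^{\balpha\cdot\bzeta}p_{\balpha}(\bzeta+\bw''\log t)\) is a nonzero exponential polynomial in \(\bzeta\) for a suitable fixed \(t\). Indeed, distinct exponentials with polynomial coefficients are linearly independent over polynomials, and the coefficient \(p_{\balpha}\) stays nonzero after the translation. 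An exponential polynomial that is nonzero on \(\C^n\) cannot vanish on the nonempty open set \(\cB\).

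I would then view \(t^{-\mu}\psi(\bx(t))\), as a function of \(s=\log t\to-\infty\), as a nonzero finite sum \(\sum_{\lambda,j}a_{\lambda,j}(\bzeta)e^{i\lambda s}s^j\) with real \(\lambda\). For \(\bzeta\) chosen so that the top-degree coefficients are not all zero, such a sum is not \(o(|s|^{j_{\max}})\) along \(s\to-\infty\); this is the almost-periodic nonvanishing lemma. By contrast, \(t^{-\mu}(\phi-\psi)\) is \(O(e^{\varepsilon s}|s|^M)\), which tends to \(0\). Hence \(\phi(\bx(t))\) is not identically zero for \(t\) near \(0\).
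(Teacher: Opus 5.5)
Your proposal follows the paper's proof. The tail estimate comes from coefficientwise absolute convergence at the single point \(\by_*=\bzeta+\bw''\log t_*\), combined with \(|(\bzeta+\bw''\log t)^{\bk}|\le c_0(1+|\log t|)^M\) and \(|t^{\balpha\cdot\bw''}|\le|t_*^{\balpha\cdot\bw''}|(t/t_*)^{\mu+\varepsilon}\); the nonvanishing comes from choosing \(\bzeta\in\cB\) so that the nonzero finite exponential polynomial \(\psi(e^{\by})\) is nonzero at one point of the curve, and then using the almost-periodic (mean-square) argument in \(\log t\) to get \(t^{-\mu}\psi(\bx(t))\not\to0\) while the tail tends to zero.

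The ``main obstacle'' you raise is not there. Definition~\ref{def:coefficientwise-absolute-convergence} assumes \(\sum_{\balpha}\sum_{|\bk|\le M}|c_{\balpha,\bk}e^{\balpha\cdot\by}|<\infty\), with no factor \(\by^{\bk}\), so your own expansion about the fixed point \(\by_*\) already gives a bound \(C|c_{\balpha,\bk}|(1+|\log t|)^M\) and the estimate closes. The interpolation detour can be dropped; as written it is not coherent anyway, since the numbers \(e^{-\balpha\cdot\boldsymbol\eta_l}\) depend on \(\balpha\) and cannot enter a fixed linear combination.
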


\begin{proof}
	Write \(\phi(\bx) = \sum_{\balpha\in S}\bx^{\balpha}p_{\balpha}(\log\bx)\) and \(p_{\balpha}(\by) = \sum_{|\bk|\leq M}c_{\balpha,\bk}\by^{\bk}\).
	Fix \(\bzeta\in\cB\) and \(t_*\in(0,t_0)\), and use \(\bz^{\balpha}=e^{\balpha\cdot\bzeta}\) on the fixed branch.
	Define \(S_*\) by \(S_*=\sum_{\balpha,\bk}|c_{\balpha,\bk}\bz^{\balpha}|t_*^{\operatorname{Re}(\bw''\cdot\balpha)-\mu}\).
	Since \(\phi\) is coefficientwise absolutely convergent at the single point \(\bzeta+\bw''\log t_*\), the number \(S_*\) is finite.
	For an exponent \(\balpha\notin\supp(\psi)\), put \(\delta_{\balpha} = \operatorname{Re}(\bw''\cdot\balpha)-\mu\), so that \(\delta_{\balpha}\geq\varepsilon\) and \(t^{\delta_{\balpha}} = t^\varepsilon t^{\delta_{\balpha}-\varepsilon} \leq t^\varepsilon t_*^{\delta_{\balpha}-\varepsilon}\) for \(0<t\leq t_*\).
	There is a constant \(c_0\), depending only on \(\bzeta\), \(\bw''\), and \(M\), with \(|(\bzeta+\bw''\log t)^{\bk}| \leq c_0(1+|\log t|)^M\) for \(|\bk|\leq M\) and \(0<t\leq t_*\).
	Consequently,
	\[
		\begin{aligned}
			\left|
			t^{-\mu}\bigl(\phi(\bx(t))-\psi(\bx(t))\bigr)
			\right|
			&\leq
			c_0(1+|\log t|)^M
			\sum_{\balpha\notin\supp(\psi)}
			\sum_{|\bk|\leq M}
			\bigl|c_{\balpha,\bk}\bz^{\balpha}\bigr|
			t^{\delta_{\balpha}}\\
			&\leq
			c_0t_*^{-\varepsilon}S_*
			t^\varepsilon(1+|\log t|)^M,
		\end{aligned}
	\]
	which proves the asserted estimate.

	Suppose that \(\psi\ne0\), and fix \(t_*\in(0,t_0)\).
	The set \(\cX_* = \exp(\cB+\bw''\log t_*) = t_*^{\bw''}\exp(\cB)\) is a nonempty open set on which the fixed branch of the logarithms is defined, and the finite exponential polynomial \(F(\by) = \psi(e^{\by}) = \sum_{\balpha}e^{\balpha\cdot\by}p_{\balpha}(\by)\) does not vanish identically on any nonempty open subset of \(\C^n\).
	Suppose, on the contrary, that \(F\) vanishes identically on some nonempty open set.
	Choose \(\bxi\in\C^n\) so that the numbers \(\balpha\cdot\bxi\) are pairwise distinct, and restrict \(F\) to the lines \(\by_0+\upsilon\bxi\).
	The functions \(e^{(\balpha\cdot\bxi)\upsilon}\) are linearly independent over \(\C[\upsilon]\), as we see by applying the products of the operators \((d/d\upsilon-\balpha'\cdot\bxi)^{M+1}\) corresponding to the other exponents, so every polynomial \(p_{\balpha}(\by_0+\upsilon\bxi)\) vanishes.
	Varying \(\by_0\) in an open set would give \(p_{\balpha}=0\) for every \(\balpha\), contrary to \(\psi\ne0\).
	Thus there is a point \(\bx_*\in\cX_*\) with \(\psi(\bx_*)\ne0\).
	Every point of \(\cX_*\) has the form \(\bz t_*^{\bw''}\) with \(\bz\in\exp(\cB)\), so put \(\bz=\bx_*t_*^{-\bw''}\).
	The restriction of \(\psi\) to this curve is nonzero at \(t=t_*\) and is therefore not identically zero.

	Group the terms of \(t^{-\mu}\psi(\bx(t))\) with equal imaginary \(\bw''\)-weights and put \(\varrho=\log t\), so that \(t^{-\mu}\psi(\bx(t)) = \sum_{j=1}^{N}e^{i\tau_j\varrho}f_j(\varrho)\), where the real numbers \(\tau_j\) are distinct and the polynomials \(f_j\) are not all zero.
	Let \(d_{\psi}\) be the largest degree of the polynomials \(f_j\) and let \(c_j\) be the coefficient of \(\varrho^{d_{\psi}}\) in \(f_j\), so that \(\varrho^{-d_{\psi}}\sum_je^{i\tau_j\varrho}f_j(\varrho) = \sum_jc_je^{i\tau_j\varrho}+o(1)\) as \(\varrho\to-\infty\).
	On the other hand,
	\[
		\lim_{R\to\infty}
		\frac{1}{R}
		\int_{-2R}^{-R}
		\left|
		\sum_{j=1}^{N}c_je^{i\tau_j\varrho}
		\right|^2d\varrho
		=
		\sum_{j=1}^{N}|c_j|^2
		>
		0.
	\]
	It follows that \(t^{-\mu}\psi(\bx(t))\) does not tend to zero as \(t\to0^+\).
	If the sum of \(\phi\) vanished identically on the curve, the first part of the proof would give \(t^{-\mu}\psi(\bx(t))=-t^{-\mu}(\phi(\bx(t))-\psi(\bx(t)))\to0\), which is a contradiction.
\end{proof}

Write \(\Sing(H_A(\bbeta))\) for the singular locus of \(H_A(\bbeta)\) in \(\C^n\), that is, for the projection to \(\C^n\) of the characteristic variety of \(M_A(\bbeta)\) outside the zero section.
For a connected open set \(\Omega\) in the nonsingular locus of \(H_A(\bbeta)\), let \(\Sol_{\Omega}(H_A(\bbeta))\) denote the space of holomorphic solutions on \(\Omega\).
For a connected open set \(\Omega\) in the nonsingular locus on which every element of \(\cV_{\bw}\) converges after branches of \(\log x_1,\ldots,\log x_n\) have been fixed, let \(\Sigma_{\Omega}:\cV_{\bw}\to\Sol_{\Omega}(H_A(\bbeta))\) denote the summation map, and put \(\cS_{\bw}^{L}(\Omega)=\Sigma_{\Omega}(\cV_{\bw}^{L})\).

\begin{theorem}
	\label{thm:analytic-realization}
	There is a nonempty simply connected open set \(\Omega \subseteq (\C^*)^n\setminus\Sing(H_A(\bbeta))\) and a choice of the branches of \(\log x_1,\ldots,\log x_n\) on \(\Omega\) such that every element of \(\cV_{\bw}\) converges on \(\Omega\) and summation gives an isomorphism
	\begin{equation}
		\Sigma_{\Omega}:
		\cV_{\bw}
		\xrightarrow{\ \sim\ }
		\Sol_{\Omega}(H_A(\bbeta)).
		\label{eq:analytic-summation-isomorphism}
	\end{equation}
	Then \(\Sigma_{\Omega}\) induces an isomorphism
	\begin{equation}
		\frac{\cV_{\bw}}
		{\cV_{\bw}^{L}}
		\xrightarrow{\ \sim\ }
		\frac{\Sol_{\Omega}(H_A(\bbeta))}
		{\cS_{\bw}^{L}(\Omega)}.
		\label{eq:analytic-intrinsic-quotient}
	\end{equation}
	In particular, the formula in Corollary~\ref{cor:formal-solution-exact-codimension} determines the codimension of \(\cS_{\bw}^{L}(\Omega)\) in the holomorphic solution space, and Theorem~\ref{thm:formal-solution-codimension-bounds} gives upper and lower bounds for this codimension.
\end{theorem}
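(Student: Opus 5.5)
The plan is to reduce everything to the convergence theory of Nilsson series for regular holonomic systems, using Proposition~\ref{prop:canonical-nilsson-identification} to move from the canonical formal solution space to basic Nilsson solutions. First replace \(\bw\) by the positive weight \(\bw'\) of that proposition, chosen inside the cone \(\cQ\) with \(\bone\in\overline{\cQ}\). Then \(\cV_{\bw}\) is the span of the basic Nilsson solutions of \(H_A(\bbeta)\) in the direction \(\bw'\). Since \(A\) is homogeneous, \(H_A(\bbeta)\) is regular holonomic. The convergence theorem of Dickenstein--Martínez~Matusevich--Miller \cite{DMM12} for perturbations of \(\bone\) then applies: there is an open set \(\cU\) in logarithmic coordinates, of the form \(\{\by : \operatorname{Re}(\bw'')\text{-type inequalities } \operatorname{Re}(y\cdot\bu)<-R \text{ on the generators of } \cQ^*\}\), on which every basic Nilsson solution in the direction \(\bw'\) converges, and the dimension of the span equals \(\rank(H_A(\bbeta))\). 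I would choose \(\Omega\) as the image under the coordinatewise exponential of a small convex, hence simply connected, open subset of \(\cU\) on which \(\exp\) is injective. Since \(\Sing(H_A(\bbeta))\) is a proper analytic subset, I would shrink this subset to avoid it. The branches of \(\log x_j\) are the ones given by the chosen logarithmic coordinates. Convergence in DMM is absolute, so every element of \(\cV_{\bw}\) is coefficientwise absolutely convergent on the preimage of \(\Omega\), and summation gives a well-defined linear map \(\Sigma_{\Omega}\) into \(\Sol_{\Omega}(H_A(\bbeta))\). Termwise differentiation is legitimate by locally uniform convergence on \(\cU\).

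Next I would prove that \(\Sigma_{\Omega}\) is injective using Lemma~\ref{lem:nilsson-asymptotic-injectivity}. Take \(0\neq\phi\in\cV_{\bw}\) and pick an integral \(\bw''\) in \(\cQ\) close to \(\bw'\), chosen so that the least real \(\bw''\)-weight \(\mu\) among the exponents of \(\phi\) is attained by a nonzero finite part \(\psi\) with a gap \(\varepsilon>0\). This works because the support is a finite union of translates of lattice points in \(\cQ^*\), which are well-ordered by a weight in the interior of \(\cQ\) (compare Lemma~\ref{lem:least-term}). Points \(\bzeta+\bw''\log t\) with \(t\to0^+\) stay in \(\cU\) because \(\cU\) is stable under such flows. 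Here \(\Omega\) itself lies on a bounded region, so the curve leaves \(\Omega\). I therefore plan to argue by analytic continuation: if \(\Sigma_{\Omega}(\phi)=0\) on \(\Omega\), then the sum vanishes on the connected domain \(\exp(\cU)\) (taken on the branch), hence on the curves. The lemma then gives a contradiction.

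Surjectivity follows from dimensions. The dimension of \(\Sol_{\Omega}(H_A(\bbeta))\) on a simply connected nonsingular domain equals \(\rank(H_A(\bbeta))\) by Cauchy--Kovalevskaya--Kashiwara. By Fact~\ref{fact:canonical-series-construction}, \(\dim\cV_{\bw}=\rank(H_A(\bbeta))\) as well. So the injective map \(\Sigma_{\Omega}\) is an isomorphism. Isomorphism \eqref{eq:analytic-intrinsic-quotient} is then formal: \(\Sigma_{\Omega}\) maps \(\cV_{\bw}^{L}\) onto \(\cS_{\bw}^{L}(\Omega)\) by definition, and an isomorphism carrying a subspace onto its image induces an isomorphism of quotients. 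The codimension statements follow from Corollary~\ref{cor:formal-solution-exact-codimension} and Theorem~\ref{thm:formal-solution-codimension-bounds}.

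The main obstacle is matching domains. The convergence domain given by DMM must contain a simply connected open piece avoiding \(\Sing\), with the branch fixed, and it must also be invariant under the flow \(\by\mapsto\by+\bw''\log t\) required in Lemma~\ref{lem:nilsson-asymptotic-injectivity}. I would first try to take \(\cB\) small around a point of \(\cU\) and verify invariance from the shape of \(\cU\), whose defining inequalities improve as \(t\to0\) since \(\bw''\cdot\bu>0\) on \(\cQ^*\). For the injectivity step I would then use identity-theorem continuation from \(\Omega\) to the connected set \(\cU\).
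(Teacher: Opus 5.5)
Your architecture matches the paper's. You identify \(\cV_{\bw}\) with the span of basic Nilsson solutions in the direction \(\bw'\) (Proposition~\ref{prop:canonical-nilsson-identification}), take a common convergence domain from \cite[Theorems~6.2 and~6.4]{DMM12}, prove injectivity of summation with Lemma~\ref{lem:nilsson-asymptotic-injectivity} along curves \(x_j(t)=z_jt^{w''_j}\), and get surjectivity from \(\dim_{\C}\cV_{\bw}=\rank(H_A(\bbeta))\). Fixing \(\Omega\) first and carrying vanishing to the curve tails by the identity theorem is a harmless reordering. The paper instead builds the tube \(\cB+\{\upsilon\bw''\mid\upsilon<\upsilon_0\}\) with imaginary width below \(2\pi\), proves injectivity on its exponential image \(\Omega'\), and then chooses \(\Omega\subseteq\Omega'\) off the singular locus. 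You should run the continuation in logarithmic coordinates on a convex domain, where the sum is single-valued, not on \(\exp(\cU)\), which is not simply connected. You also omit the reduction to \(\Z A=\Z^d\) that the results of \cite{DMM12} assume.

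The genuine gap is your sentence ``convergence in DMM is absolute, so every element of \(\cV_{\bw}\) is coefficientwise absolutely convergent.'' Lemma~\ref{lem:nilsson-asymptotic-injectivity} needs \(\sum_{\balpha}\sum_{|\bk|\leq M}|c_{\balpha,\bk}e^{\balpha\cdot\by}|<\infty\) at a point of each curve. Its tail estimate compares the terms at \(\bzeta+\bw''\log t\) with those at \(t_*\), and that comparison goes through the coefficients. As the paper uses the convergence theorem, it gives absolute convergence of the grouped terms \(\sum_{\balpha}|e^{\balpha\cdot\by}p_{\balpha}(\by)|\). At a single point this does not bound the coefficients, since \(p_{\balpha}(\by)\) can be small while its coefficients are large.

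The paper closes this as follows. Fix \(\by\) in the tube and use the \(N=\binom{n+M}{n}\) points \(\by+2\pi i\bk\) with \(|\bk|\leq M\).
\begin{itemize}
\item Shifting the branch of \(\log\bx\) by \(2\pi i\bk\) sends basic Nilsson solutions to basic Nilsson solutions with the same supports. The domain of \cite[Notation~6.1]{DMM12} is defined by moduli, so the grouped series still converges absolutely at each shifted point.
\item The evaluation map \(p\mapsto\bigl(p(\by+2\pi i\bk^{(j)})\bigr)_j\) is an isomorphism on polynomials of degree at most \(M\), being unitriangular in a binomial basis. This bounds \(\sum_{\bk}|c_{\balpha,\bk}|\) by a constant times \(\max_j|p_{\balpha}(\by+2\pi i\bk^{(j)})|\).
\item All shifts lie in \(L\subseteq\Z^n\), so only finitely many imaginary parts of exponents occur. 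Hence \(|e^{\balpha\cdot\by}|\leq D_{\phi,M}|e^{\balpha\cdot(\by+2\pi i\bk)}|\) uniformly in \(\balpha\).
\end{itemize}
Summing over \(\balpha\) gives coefficientwise absolute convergence on the tube. You need this argument, or an explicit coefficient bound extracted from the proof in \cite{DMM12}, before the lemma can be applied.
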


\begin{proof}
	As in the proof of Proposition~\ref{prop:canonical-nilsson-identification}, we may replace \((A,\bbeta)\) by \((T^{-1}A,T^{-1}\bbeta)\), where the columns of \(T\in\Z^{d\times d}\) form a \(\Z\)-basis of \(\Z A\), retain the notation \((A,\bbeta)\), and assume \(\Z A=\Z^d\), as required in \cite{DMM12}.
	This replacement changes neither \(L\), nor the canonical series, nor the solution spaces.

	Choose a linear functional \(\bh\) such that \(\bh\ba_j=1\) for \(j=1,\ldots,n\), as permitted by the standing homogeneity assumption.
	Then \(\bone=\bh A\) belongs to \(\operatorname{rowspan}_{\R}(A)\), and the cone spanned by the columns of \(A\) is strongly convex.
	For \(\bu\in L\), we have \(|\bu| = \bh(A\bu) =0\).

	Choose \(\bw'\) as constructed in the proof of Proposition~\ref{prop:canonical-nilsson-identification}.
	The construction shows that \(\bw'\) is a perturbation of \(\bone\) in the sense of \cite[Definition~3.3]{DMM12}.
	Proposition~\ref{prop:canonical-nilsson-identification} identifies \(\cV_{\bw}\) with the span of the basic Nilsson solutions in the direction \(\bw'\), and \cite[Lemma~2.8]{DMM12} supplies bases with linearly independent initial series.
	Every support shift belongs to \(L\) and has coordinate sum zero, so the convergence criterion in \cite[Theorem~6.2]{DMM12} applies to every basic Nilsson solution in \(\cV_{\bw}\).
	By \cite[Theorem~6.4]{DMM12}, there is a common nonempty open domain of convergence on which every element of \(\cV_{\bw}\) converges.
	Choose a \(\Z\)-basis \(\{\bgamma_1,\ldots,\bgamma_{n-d}\}\) of \(L\) such that \(\bgamma_i\cdot\bw'>0\) for \(i=1,\ldots,n-d\), as in \cite[Notation~6.1]{DMM12}.
	The intersection of the open cone that defines the direction \(\bw'\) with the open cone of vectors having a positive dot product with each \(\bgamma_i\) is a strongly convex open rational polyhedral cone contained in \(\R_{>0}^n\); it contains \(\bw'\), and hence we can choose a positive integral vector \(\bw''\) in this intersection.
	For this basis, the common domain of convergence contains a set defined as in \cite[Notation~6.1]{DMM12} by finitely many inequalities of the form \(|\bx^{\bgamma_i}|<\varepsilon_i\), and the choice of \(\bw''\) gives \(\bgamma_i\cdot\bw''>0\) for \(i=1,\ldots,n-d\).
	In the coordinates \(\log\bx\), these inequalities cut out open half-spaces.
	Choose a convex box \(\cB\) with imaginary width less than \(2\pi\) and a sufficiently negative real number \(\upsilon_0\); the tube \(\cB+\{\upsilon\bw''\mid \upsilon<\upsilon_0\}\) then lies inside the set defined by these inequalities, and hence inside the common domain of convergence.
	Define \(\Omega'\) to be the image of this tube under the coordinatewise exponential map.
	The exponential map is injective on the tube, so \(\Omega'\) is a nonempty simply connected open subset of the common domain of convergence, the branches of the logarithms are fixed on \(\Omega'\), and \(\Omega'\) contains the tail of the curve defined by \(x_j(t)=z_jt^{w_j''}\) whenever \(\log\bz\in\cB\).
	By \cite[Corollary~2.4.16]{SST00}, equivalently by the dimension assertion in \cite[Theorem~6.4]{DMM12}, we have \(\dim_{\C}\cV_{\bw}=\rank(H_A(\bbeta))\).

	We next prove that summation on \(\Omega'\) is injective.
	Take \(0\ne\phi\in\cV_{\bw}\).
	By \cite[Proposition~2.5.2]{SST00}, the series \(\phi\) has a nonzero finite initial series with respect to \(\bw''\); define \(\psi\) to be this initial series, and let \(\mu\) be the common real part of the \(\bw''\)-weights of the exponents of \(\psi\).
	Since \(\phi\) is a finite linear combination of basic Nilsson solutions whose supports lie in the dual of a strongly convex open cone having \(\bw''\) in its interior, there is a positive number \(\varepsilon\) such that every term whose exponent lies outside \(\supp(\psi)\) has real \(\bw''\)-weight at least \(\mu+\varepsilon\).
	The domain in \cite[Notation~6.1]{DMM12} is defined by inequalities in the moduli of the coordinates, so absolute convergence at a point of \(\Omega'\) implies absolute convergence at every point obtained by shifting the logarithms by an element of \((2\pi i\Z)^n\).
	Let \(M\) be a common bound for the degrees of the logarithmic coefficients in \(\phi\), and put \(\cT = \cB+\{\upsilon\bw''\mid \upsilon<\upsilon_0\}\).
	Fix \(\by\in\cT\), and write \(\phi(e^{\by}) = \sum_{\balpha\in S} e^{\balpha\cdot\by}p_{\balpha}(\by)\) and \(p_{\balpha}(\by) = \sum_{|\bk|\leq M}c_{\balpha,\bk}\by^{\bk}\).
	Put \(K_M = \{\bk\in\N^n\mid |\bk|\leq M\}\) and \(N = |K_M| = \binom{n+M}{n}\), and enumerate \(K_M\) as \(\bk^{(1)},\ldots,\bk^{(N)}\).
	Let \(\by'=(y'_1,\ldots,y'_n)\) be an auxiliary vector of variables.
	Define \(\operatorname{ev}_{\by,M}:\C[\by']_{\leq M}\to\C^N\) by \(\operatorname{ev}_{\by,M}(p)=\bigl(p(\by+2\pi i\bk^{(1)}),\ldots,p(\by+2\pi i\bk^{(N)})\bigr)\).
	The map \(\operatorname{ev}_{\by,M}\) is injective: after the invertible change of variables \(\widetilde p(\by')=p(\by+2\pi i\by')\), the polynomials \(\prod_{\nu=1}^{n}\binom{y'_\nu}{k_\nu}\) with \(\bk\in K_M\) form a basis whose evaluation matrix on \(K_M\), ordered by total degree, is triangular with diagonal entries equal to one.
	Since the domain and codomain of \(\operatorname{ev}_{\by,M}\) both have dimension \(N\), this map is an isomorphism.
	Equip \(\C[\by']_{\leq M}\) with the coefficient \(\ell^1\)-norm.
	Every polynomial \(p(\by')=\sum_{|\bk|\leq M}c_{\bk}(\by')^{\bk}\) then satisfies \(\sum_{|\bk|\leq M}|c_{\bk}|\leq\|\operatorname{ev}_{\by,M}^{-1}\|_{\ell^\infty\to\ell^1}\max_{1\leq j\leq N}|p(\by+2\pi i\bk^{(j)})|\).

	Replacing \(\log\bx\) by \(\log\bx+2\pi i\bk^{(j)}\) preserves the supports and the logarithmic degree bounds and sends every basic Nilsson solution to another basic Nilsson solution in the same direction.
	The absolute-convergence argument in \cite[Theorem~6.2]{DMM12}, together with the common domain in \cite[Theorem~6.4]{DMM12}, therefore gives \(\sum_{\balpha\in S} \left| e^{\balpha\cdot(\by+2\pi i\bk^{(j)})} p_{\balpha}(\by+2\pi i\bk^{(j)}) \right| < \infty\) for \(j=1,\ldots,N\).
	The set of imaginary parts of the exponents in \(S\) is finite because \(\phi\) is a finite linear combination of basic Nilsson solutions and every support shift belongs to \(L\subseteq\Z^n\), so the number \(D_{\phi,M}=\max_{\balpha\in S,\ \bk\in K_M}\exp\bigl(2\pi\operatorname{Im}(\balpha)\cdot\bk\bigr)\) is finite.
	For \(\balpha\in S\) and \(\bk\in K_M\), we have \(|e^{\balpha\cdot\by}|=\exp(2\pi\operatorname{Im}(\balpha)\cdot\bk)|e^{\balpha\cdot(\by+2\pi i\bk)}|\leq D_{\phi,M}|e^{\balpha\cdot(\by+2\pi i\bk)}|\).
	Applying the inequality for \(\operatorname{ev}_{\by,M}^{-1}\) to each \(p_{\balpha}\) and then summing over \(\balpha\) gives
	\[
		\sum_{\balpha\in S}
		\sum_{|\bk|\leq M}
		\left|
		c_{\balpha,\bk}e^{\balpha\cdot\by}
		\right|
		\leq
		\|\operatorname{ev}_{\by,M}^{-1}\|_{\ell^\infty\to\ell^1}D_{\phi,M}
		\sum_{j=1}^{N}
		\sum_{\balpha\in S}
		\left|
		e^{\balpha\cdot(\by+2\pi i\bk^{(j)})}
		p_{\balpha}(\by+2\pi i\bk^{(j)})
		\right|
		<
		\infty.
	\]
	Since \(\by\in\cT\) was arbitrary, \(\phi\) is coefficientwise absolutely convergent on \(\cT\) in the sense of Definition~\ref{def:coefficientwise-absolute-convergence}.
	Lemma~\ref{lem:nilsson-asymptotic-injectivity} shows that the sum of \(\phi\) is not identically zero on \(\Omega'\).
	Thus summation on \(\Omega'\), and hence on every nonempty open subset of \(\Omega'\), is injective.

	The domain \(\Omega'\) is nonempty and open in the usual topology, and \(\Sing(H_A(\bbeta))\) is a proper algebraic subset, so \(\Omega'\cap\bigl((\C^*)^n\setminus\Sing(H_A(\bbeta))\bigr)\) is nonempty.
	Choose a simply connected open set \(\Omega\) in this intersection.
	The holomorphic solution space on \(\Omega\) has dimension \(\rank(H_A(\bbeta))\), so the summation map is an injective map between two spaces of the same finite dimension and is therefore surjective, which gives \eqref{eq:analytic-summation-isomorphism}.
	Restricting the isomorphism to \(\cV_{\bw}^{L}\) and passing to quotients proves \eqref{eq:analytic-intrinsic-quotient} and the last assertion.
\end{proof}

\section{Two families of examples}
\label{sec:applications}

In this section, we apply the preceding results to the five-column configuration of Theorem~5.1 of \cite{Nak26} and to the family of lattice rank 2 of Theorem~6.2 of \cite{Nak26}.
We recall from \cite{Nak26} the data that the computations below use.

For the five-column configuration, the linear forms are \((\ell_1,\ldots,\ell_5)=(3s_1,2s_1+3s_2,-3s_1-3s_2,s_1+s_2,-3s_1-s_2)\), and the two ideals are
\begin{equation}
	J_{\cN}^{\amb}=\langle\ell_5,\ell_2\ell_4\rangle=\langle3s_1+s_2,(s_1+s_2)^2\rangle, \qquad J_{\cN}^{\intr}=\langle\ell_2,\ell_5\rangle=\langle s_1,s_2\rangle.
	\label{eq:five-column-ideals}
\end{equation}
The realized negative supports of the five-column configuration, and the fibers among them that lie in \(\cC(\bw)\), are
\begin{equation}
	\begin{array}{c|c}
		I&\text{semigroup containment}\\ \hline
		\{1,3\}&\cF_I\subseteq\cC(\bw)\\
		\{1,4\}&\cF_I\subseteq\cC(\bw)\\
		\{1,3,5\}&\cF_I\subseteq\cC(\bw)\\ \hline
		\{2,4\},\ \{1,2,4\},\ \{3,5\},\ \{2,3,5\},\ \{2,4,5\}&\text{negative weight}.
	\end{array}
	\label{eq:five-column-supports}
\end{equation}
Hence \(\cN_{\bv}=\{\{1,3\},\{1,4\},\{1,3,5\}\}\) and \(K_{\cN_{\bv}}=\{1\}\).
Write \(J_0=\{1,3\}\), \(J_1=\{1,4\}\), and \(J_2=\{1,3,5\}\) for the three members of this collection.

For the family of lattice rank 2, put \(\lambda_1=2s_1+3s_2\), \(\lambda_2=3s_1+s_2\), and \(\lambda_3=s_1+s_2\) in \(\C[[s_1,s_2]]\), and write \(J_k^\triangle\) for the union of the \(q\) indexed copies of \(J_k\).
The fake exponent \(\bv_q^\triangle\) repeats \(\bv_0=\tp{(-1,0,-1,0,0)}\) in the \(q\) coordinate blocks, the ordered negative support family is \(\cN_q^{\triangle,\circ}=\{J_0^\triangle,J_1^\triangle\}\), and the two ideals are
\begin{equation}
	\Phi\bigl(Q_{\cN_q^{\triangle,\circ}}(\bt):e_q^\triangle\bigr)=\langle\lambda_2^q,\lambda_1^q\lambda_3^q\rangle, \qquad \Pi_{\cN_q^{\triangle,\circ}}:m_{\bv_q^\triangle,\cN_q^{\triangle,\circ}}=\langle\lambda_1^q,\lambda_2^q\rangle.
	\label{eq:family-ideals}
\end{equation}
\begin{corollary}
	\label{cor:lattice-counterexample-formal-codimension}
	For the system in Theorem~5.1 of \cite{Nak26}, \(\dim_{\C}\cV_{\bw}=12\), \(\dim_{\C}\cV_{\bw}^{L}=11\), and \(\dim_{\C} \left( \cV_{\bw}/\cV_{\bw}^{L} \right) =1\).
	There is a nonempty simply connected open set \(\Omega\), contained in the nonsingular locus, on which the canonical series converge, and on this set we have
	\[
		\begin{aligned}
			\dim_{\C}\Sol_{\Omega}(H_A(\bbeta))
			&=12,
			&
			\dim_{\C}\cS_{\bw}^{L}(\Omega)
			&=11,\\
			\dim_{\C}
			\frac{\Sol_{\Omega}(H_A(\bbeta))}
			{\cS_{\bw}^{L}(\Omega)}
			&=1.
		\end{aligned}
	\]
	A nonzero class in \(\cV_{\bw}/\cV_{\bw}^{L}\) has a representative whose \(\bw\)-initial term is
	\[
		\begin{aligned}
			&\bx^{\bv}
			\left(
			(\bb^{(1)}-3\bb^{(2)})\cdot\log\bx
			\right)\\
			&\qquad
			=x_1^{-1}x_3^{-1}
			\left(
			3\log x_1-7\log x_2+6\log x_3-2\log x_4
			\right).
		\end{aligned}
	\]
\end{corollary}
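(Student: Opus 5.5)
The plan is to evaluate the exact formula of Corollary~\ref{cor:formal-solution-exact-codimension} for this configuration, and then transfer the result to holomorphic solutions with Theorem~\ref{thm:analytic-realization}.

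\textbf{Step 1: exponents and the total dimension.} First I would determine the finite set \(\sE_{\bbeta,\bw}\) for the system of Theorem~5.1 of \cite{Nak26}. I would list the standard pairs of \(\inw(I_A)\) and use Corollary~3.2.3 of \cite{SST00} to obtain the fake exponents. For each fake exponent \(\bv\), I would compute the local inverse system \(\cE_{\bv}\) of the indicial ideal. Summing gives \(\dim_{\C}\cV_{\bw}=\sum_{\bv}\dim_{\C}\cE_{\bv}\), which should equal \(\rank H_A(\bbeta)=12\) by Fact~\ref{fact:canonical-series-construction}. This rank could also be checked independently as the normalized volume, if \(\bbeta\) is nonresonant enough.

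\textbf{Step 2: local codimensions.} Next I would compute \(\delta_{\bv}^{L}=\dim_{\C}(J_{\bv}^{L}/\cE_{\bv}^{\perp})\) for each exponent, using Proposition~\ref{prop:all-ordered-local-quotient}. At the distinguished exponent \(\bv\) with \(\bx^{\bv}=x_1^{-1}x_3^{-1}\), the collection \(\cN_{\bv}\) is \(\{J_0,J_1,J_2\}\) by \eqref{eq:five-column-supports}. I expect \(\sO(\bv)\) to consist of the ordered subfamilies containing \(J_0\), and I would check their intrinsic ideals individually. For \(\cN=\cN_{\bv}\), \eqref{eq:five-column-ideals} gives \(J^{\intr}=\fm\) and \(J^{\amb}=\langle 3s_1+s_2,(s_1+s_2)^2\rangle\). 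The quotient \(J^{\intr}/J^{\amb}\) is then one-dimensional in degree \(1\), spanned by the class of a linear form not proportional to \(3s_1+s_2\). Dually, \(\cE_{\bv}\) is spanned by \(1\) and the degree-one element annihilating \(3s_1+s_2\), namely \(\partial_{s_1}-3\partial_{s_2}\). This element is missing from \(\cI_{\bv}^{L}\), provided the smaller ordered families contribute nothing new in degree one. I would verify that each smaller family has \(J^{\intr}\supseteq\fm\), so that \(J_{\bv}^{L}=\fm\) and \(\delta_{\bv}^{L}=1\). At every other exponent I would check \(\delta^{L}=0\). This is most likely because those exponents have \(\cE=\C\), so the constant term is attained by intrinsic perturbation.

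\textbf{Step 3: the coset correction and the conclusion.} If \(\bv\) lies in a least class \(P\in\sP^{\min}_{\bbeta,\bw}\), Corollary~\ref{cor:formal-solution-completeness} immediately gives codimension \(1\), hence \(\dim_{\C}\cV_{\bw}^{L}=11\). Otherwise I would compute \(\tau_{C,k}\) via \(\mathsf T_C\). Here the main obstacle is showing that \(\mathsf T_{C,k,<k}(\ker\mathsf T_{C,<k})\) is contained in \(\cI_{P_k}^{L}\). I would first try a weight argument: intrinsic series based at lower classes reach \(\bx^{\bv}\) only with coefficients given by \eqref{eq:full-intrinsic-transition-coefficient}, and these coefficients vanish or are constant. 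The codimension statement on \(\Omega\) then follows from \eqref{eq:analytic-intrinsic-quotient}. The representative of the nonzero class is the canonical series with starting data \(\partial_{s_1}-3\partial_{s_2}\) at \(\bv\). Translating to \(\log\bx\) through \(B\bs\) gives the displayed form \((\bb^{(1)}-3\bb^{(2)})\cdot\log\bx\). Finally I would read off the explicit coefficients \(3,-7,6,-2\) from the columns of \(B\) that are implicit in the forms \(\ell_j\).
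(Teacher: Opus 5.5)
Your framework matches the paper's: compute \(\delta^{L}\) at each exponent via Proposition~\ref{prop:all-ordered-local-quotient}, feed the values into Theorem~\ref{thm:formal-solution-codimension-bounds} (or Corollary~\ref{cor:formal-solution-completeness}), and transfer with Theorem~\ref{thm:analytic-realization}. The gap is in Step~3, where you leave the decisive question as a conditional. With \(\delta_{\bv}^{L}=1\) and \(\delta^{L}=0\) at every other exponent, the upper bound in \eqref{eq:formal-solution-codimension-bounds} is \(1\). The lower bound is \(1\) only if the class of \(\bv\) is least in its \(L\)-coset. If some exponent of \(\bv+L\) had smaller relative weight, that coset would contribute \(1-\tau_{C,2}\), which could be \(0\).

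Your fallback does not close this. You give no reason why the coefficients in \eqref{eq:full-intrinsic-transition-coefficient} should vanish or be constant: they are derivatives of \(m_{\bv,\cN}(\bs)a_{\bu}(\bs)\exp((\log\bx)B\bs)\) and can carry logarithms. Moreover, \(\tau_{C,k}\) is governed by \(\mathsf T_{C,k,<k}(\ker\mathsf T_{C,<k})\), i.e.\ by combinations of intrinsic series whose lower coordinates cancel, not by individual transition coefficients.

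What is needed is the explicit coset check the paper performs. The only other exponent in \(\bv+L\) is \(\tp{(-10,0,2,-1,7)}=\bv+B\tp{(-3,2)}\). Since \(\bw B=(-1,\vartheta)\) with \(\vartheta\) irrational, the classes are singletons. Then \(\bw\cdot B\tp{(-3,2)}=3+2\vartheta>0\) puts \(\{\bv\}\) in \(\sP_{\bbeta,\bw}^{\min}\), so both bounds equal \(1\) and no \(\tau\) has to be computed.

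There are also two smaller points.

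First, the collection \(\cN_{\bv}=\{J_0,J_1,J_2\}\) is not ordered, because the realized support \(\{3,5\}\subsetneq J_2\) lies outside it. The ordered families are \(\{J_0\}\) and \(\cN^{\circ}=\{J_0,J_1\}\), so \(\cN_{\bv}^{\ord}=\cN^{\circ}\). Lemma~\ref{lem:largest-ordered-family} is applied to \(\cN^{\circ}\), whose ideals are those in \eqref{eq:five-column-ideals}, to get \(\cE_{\bv}^{\perp}=\langle3s_1+s_2,(s_1+s_2)^2\rangle\). Your numbers are unaffected.

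Second, your ``most likely \(\cE=\C\)'' at the other ten fake exponents should be replaced by the actual argument. Their local components of the fake indicial ideal are reduced, so \eqref{eq:actual-fake-local-inclusions} bounds \(\cE\) by a one-dimensional space. Each has minimal negative support, so the singleton family \(\{I_{\bzero}\}\) is ordered and yields the constant coefficient. This shows at once that these ten are exponents, that \(\delta^{L}=0\) there, and that \(\dim_{\C}\cV_{\bw}=2+10=12\).
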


\begin{proof}
	The twelve standard pairs of \(\inw(I_A)\) give eleven distinct fake exponents: \(\bv\) occurs twice, and each of the other ten occurs once.
	In the coordinates \(\bv+B\bs\), the local component of the fake indicial ideal at \(\bs=\bzero\) is \(\langle3s_1+s_2,(s_1+s_2)^2\rangle\), which has length two, and the other ten local components are reduced.
	The inverse system of this component is \(\Span\{1,\partial_{s_1}-3\partial_{s_2}\}\), and each of the other ten inverse systems is spanned by \(1\).
	All eleven fake exponents have minimal negative support.
	At \(\bv\), the only ordered negative support families are \(\{\{1,3\}\}\) and \(\cN^{\circ}\), and both intrinsic coefficient spaces are \(\C\).
	Hence \(\cN_{\bv}^{\ord}=\cN^{\circ}\), and \eqref{eq:five-column-ideals} and Lemma~\ref{lem:largest-ordered-family} give \(\cE_{\bv}=\Span\{1,\partial_{s_1}-3\partial_{s_2}\}\).
	It follows that \(\cI_{\bv}^{L}=\C\) and \(\delta_{\bv}^{L}=1\).
	For each of the other ten fake exponents \(\widetilde{\bv}\), the singleton family \(\{I_{\bzero}\}\) produces the constant coefficient, and the inclusions in \eqref{eq:actual-fake-local-inclusions} show that \(\cE_{\widetilde{\bv}}\) is contained in the one-dimensional fake indicial inverse system.
	Hence \(\cE_{\widetilde{\bv}}=\cI_{\widetilde{\bv}}^{L}=\C\) and \(\delta_{\widetilde{\bv}}^{L}=0\) for each of these ten fake exponents.
	Thus all eleven fake exponents are exponents, or equivalently, they belong to \(\sE_{\bbeta,\bw}\).
	The only two exponents lying in the same \(L\)-coset are \(\bv\) and \(\tp{(-10,0,2,-1,7)}=\bv+B\tp{(-3,2)}\).
	Since \(\bw B=(-1,\vartheta)\) and \(\vartheta\) is irrational, \(\bw\) is nonzero on every nonzero element of \(L\), so the classes in \(\sP_{\bbeta,\bw}\) are singletons.
	The inequality \(\bw\cdot\bigl(B\tp{(-3,2)}\bigr)=3+2\vartheta>0\) shows that \(\{\bv\}\in\sP_{\bbeta,\bw}^{\min}\).
	The lower and upper bounds in Theorem~\ref{thm:formal-solution-codimension-bounds} are therefore both equal to one.
	The first identity in \eqref{eq:formal-filtration-dimensions} gives \(\dim_{\C}\cV_{\bw}=2+10=12\), and hence \(\dim_{\C}\cV_{\bw}^{L}=11\).
	The displayed initial term represents the nonzero local quotient at \(\bv\), so the corresponding canonical series gives a nonzero class in \(\cV_{\bw}/\cV_{\bw}^{L}\).
	The analytic assertions follow from Theorem~\ref{thm:analytic-realization}.
	\ifarxiv
	The ancillary script \texttt{verify\_rank\_two\_solution\_codimension.sage} checks, in exact arithmetic, the fake exponents obtained from the listed standard pairs, the local lengths, the minimal negative supports, the ordered negative support families within the supplied distinguished collection at \(\bv\), and the lower and upper bounds giving codimension one; the ancillary script \texttt{verify\_rank\_two\_solution\_codimension.m2} computes the standard pairs and independently checks the primary decomposition of the fake indicial ideal and the local lengths.
	\fi
\end{proof}
\begin{lemma}
	\label{lem:three-linear-form-powers-length}
	Let \(\lambda_1,\lambda_2,\lambda_3\) be pairwise nonproportional linear forms in \(S_0=\C[s_1,s_2]\).
	For every integer \(q\geq1\),
	\begin{equation}
		\dim_{\C}
		\frac{S_0}{\langle\lambda_1^q,\lambda_2^q,\lambda_3^q\rangle}
		=
		\left\lceil\frac{3q^2}{4}\right\rceil.
		\label{eq:three-linear-form-powers-length}
	\end{equation}
\end{lemma}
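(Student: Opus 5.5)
The plan is to reduce to a monomial complete intersection in two variables and use the strong Lefschetz property to compute the Hilbert function degree by degree. Since \(\lambda_1,\lambda_2\) are nonproportional, they form a basis of \(S_{0,1}\). After a linear change of coordinates we may therefore take \(\lambda_1=s_1\) and \(\lambda_2=s_2\). Then \(\lambda_3=as_1+bs_2\), and \(ab\ne0\) because \(\lambda_3\) is proportional to neither \(\lambda_1\) nor \(\lambda_2\). Rescaling \(s_1\) and \(s_2\) separately preserves the ideal \(\langle s_1^q,s_2^q\rangle\), and rescaling \(\lambda_3\) does not change \(\langle\lambda_3^q\rangle\). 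So we may assume \(\lambda_3=s_1+s_2\), and the length to compute is that of \(\overline A/(s_1+s_2)^q\overline A\), where \(\overline A=S_0/\langle s_1^q,s_2^q\rangle\).

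First we record the Hilbert function \(h\) of the graded Artinian complete intersection \(\overline A\). Its monomial basis gives
\[
h(i)=i+1 \quad (0\le i\le q-1),\qquad h(i)=2q-1-i \quad (q\le i\le 2q-2),
\]
and \(h(i)=0\) otherwise. This function is symmetric about \(q-1\) and unimodal.

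The key step is the claim that multiplication by \(\ell^q\), where \(\ell=s_1+s_2\), has maximal rank \(\overline A_{i-q}\to\overline A_i\) in every degree \(i\); this is the main obstacle. Given the claim, we get
\[
\dim_{\C}\bigl(\overline A/\ell^q\overline A\bigr)_i=\max\bigl(h(i)-h(i-q),0\bigr).
\]
For the claim we invoke the strong Lefschetz property of \(\C[s_1]/(s_1^q)\otimes\C[s_2]/(s_2^q)\) with respect to \(s_1+s_2\) in characteristic zero (Stanley, Watanabe). Concretely, each factor \(\C[s]/(s^q)\) is an irreducible \(\mathfrak{sl}_2\)-representation in which multiplication by \(s\) acts as the lowering operator. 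The tensor product is again an \(\mathfrak{sl}_2\)-representation, with lowering operator \(s_1+s_2\) and weight grading equal to the degree, shifted to be centred at \(q-1\). By the Clebsch--Gordan decomposition it is a sum of strings centred at degree \(q-1\). On each string, a power of the lowering operator has maximal rank between any two degrees, hence so does \(\ell^q\) on the sum. If a self-contained argument is preferred, the same conclusion follows in two variables by checking that the relevant matrices of binomial coefficients \(\binom{q}{j}\) are nonsingular, a Vandermonde-type determinant.

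Finally we sum the degreewise dimensions. For \(0\le i\le q-1\) we have \(h(i-q)=0\), so the contribution is \(\sum_{i=0}^{q-1}(i+1)=q(q+1)/2\). For \(q\le i\le 2q-2\) the difference is \(h(i)-h(i-q)=3q-2-2i\), which is positive exactly when \(i\le\lfloor(3q-3)/2\rfloor\); degrees \(i\ge 2q-1\) contribute nothing. We then check separately the parities \(q=2p\) and \(q=2p+1\). For \(q=2p\) the second sum is \(\sum_{i=2p}^{3p-2}(6p-2-2i)=p^2-p\), so the total is \(p(2p+1)+p^2-p=3p^2=3q^2/4\). For \(q=2p+1\) the second sum is \(\sum_{i=2p+1}^{3p}(6p+1-2i)=p^2\), so the total is \((2p+1)(p+1)+p^2=3p^2+3p+1=\lceil3q^2/4\rceil\). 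This gives \eqref{eq:three-linear-form-powers-length}.
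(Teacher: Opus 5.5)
Your proof is correct and follows essentially the same route as the paper. Both arguments reduce to \(s_1,s_2,s_1+s_2\), use the same Hilbert function of \(\C[s_1,s_2]/\langle s_1^q,s_2^q\rangle\), and use the maximal rank of multiplication by \((s_1+s_2)^q\) coming from the strong Lefschetz property. The paper cites that property where you sketch the \(\mathfrak{sl}_2\) argument, and it subtracts the summed ranks from \(q^2\) where you sum cokernel dimensions degree by degree. One step in your argument should be stated explicitly: maximal rank on each string passes to the direct sum only because all strings share the centre \(q-1\). You mention the common centre, but your ``hence'' does not say that it is what makes the step work.
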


\begin{proof}
	After a linear change of variables and two nonzero rescalings, it is enough to prove the formula for \(\lambda_1=s_1\), \(\lambda_2=s_2\), and \(\lambda_3=s_1+s_2\).
	The formula is immediate when \(q=1\), so assume that \(q\geq2\).
	Put \(\overline S_q=S_0/\langle s_1^q,s_2^q\rangle\).
	The Hilbert function of \(\overline S_q\) is
	\[
		h_{\overline S_q}(d)
		=
		\begin{cases}
			d+1,    & 0\leq d\leq q-1, \\
			2q-1-d, & q\leq d\leq2q-2.
		\end{cases}
	\]
	The strong Lefschetz theorem for monomial complete intersections in characteristic zero \cite[Theorem~5 and Proposition~9]{RRR91} shows that multiplication by \((s_1+s_2)^q\) has maximal rank in every degree.
	For \(0\leq d\leq q-2\), the rank of multiplication by \((s_1+s_2)^q\) from \((\overline S_q)_d\) to \((\overline S_q)_{d+q}\) is therefore \(\min\{d+1,q-1-d\}\).
	Consequently,
	\begin{align*}
		\dim_{\C}\overline S_q/\langle(s_1+s_2)^q\rangle
		 & =q^2-
		\sum_{d=0}^{q-2}\min\{d+1,q-1-d\}          \\
		 & =
		\begin{cases}
			3q^2/4,     & q\equiv0\pmod2, \\
			(3q^2+1)/4, & q\equiv1\pmod2,
		\end{cases}                       \\
		 & =\left\lceil\frac{3q^2}{4}\right\rceil.
	\end{align*}
\end{proof}
\begin{theorem}
	\label{thm:fixed-rank-unbounded-solution-codimension}
	For the system in Theorem~6.2 of \cite{Nak26}, put \(\bbeta_q^\triangle=A_q^\triangle\bv_q^\triangle\).
	The vector \(\bv_q^\triangle\) is an exponent of \(H_{A_q^\triangle}(\bbeta_q^\triangle)\) with respect to \(\bw_q^\triangle\), its class is least in its \(L\)-coset, and
	\begin{equation}
		\delta_{\bv_q^\triangle}^{L}
		=
		\left\lceil\frac{3q^2}{4}\right\rceil.
		\label{eq:fixed-rank-local-all-family-codimension}
	\end{equation}
	Hence,
	\begin{equation}
		\dim_{\C}
		\frac{\cV_{\bw_q^\triangle}}
		{\cV_{\bw_q^\triangle}^{L}}
		\geq
		\left\lceil\frac{3q^2}{4}\right\rceil.
		\label{eq:fixed-rank-formal-solution-codimension}
	\end{equation}
	There is a nonempty simply connected open set \(\Omega_q \subseteq (\C^*)^{5q}\setminus \Sing(H_{A_q^\triangle}(\bbeta_q^\triangle))\) such that
	\begin{equation}
		\dim_{\C}
		\frac{
		\Sol_{\Omega_q}(H_{A_q^\triangle}(\bbeta_q^\triangle))
		}{
			\cS_{\bw_q^\triangle}^{L}(\Omega_q)
		}
		\geq
		\left\lceil\frac{3q^2}{4}\right\rceil.
		\label{eq:fixed-rank-holomorphic-solution-codimension}
	\end{equation}
	Thus, even after all exponents occurring in the canonical formal solution space and all ordered negative support families are included, the codimension of \(\cS_{\bw_q^\triangle}^{L}(\Omega_q)\) is unbounded among homogeneous systems of lattice rank 2 with the other properties in Theorem~6.2 of \cite{Nak26}.
\end{theorem}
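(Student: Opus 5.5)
The plan is to prove \eqref{eq:fixed-rank-local-all-family-codimension} by identifying \(\cE_{\bv_q^\triangle}^{\perp}\) and \(J_{\bv_q^\triangle}^{L}\) explicitly. Then the lower bound \eqref{eq:fixed-rank-formal-solution-codimension} follows from Theorem~\ref{thm:formal-solution-codimension-bounds}, and the analytic statement from Theorem~\ref{thm:analytic-realization}. We abbreviate \(\bv=\bv_q^\triangle\).

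First, I would show that \(\bv\) is an exponent and that its class is least in its \(L\)-coset. The nonzero constant coefficient is produced by the singleton family \(\{I_{\bzero}\}\), since \(1\in\cP^\perp\). Hence \(\cE_{\bv}\ne0\) and \(\bv\in\sE_{\bbeta_q^\triangle,\bw_q^\triangle}\). For minimality I would use the data of \cite[Theorem~6.2]{Nak26}, which are blockwise copies of the five-column configuration. There \(\bw\) is nonzero on \(L\setminus\{\bzero\}\), so classes are singletons. Moreover, every exponent \(\bv+\bu\) in the coset satisfies \(\bw\cdot\bu>0\): since \(\bv+\bu\) is a fake exponent, its negative support lies in \(\cN_{\bv}\), and the weight on the corresponding support fiber is nonnegative, and in fact positive unless \(\bu=\bzero\). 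This is the analogue of the check \(3+2\vartheta>0\) in Corollary~\ref{cor:lattice-counterexample-formal-codimension}. I would verify it from the support table of \cite{Nak26} for the \(q\)-fold configuration.

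Second, I would compute the two ideals. By Lemma~\ref{lem:largest-ordered-family}, \(\cE_{\bv}^{\perp}=J^{\amb}_{\bv,\cN^{\ord}_{\bv}}\). I would argue that \(\cN_{\bv}^{\ord}=\cN_q^{\triangle,\circ}\) by the same argument as in the five-column case: \(\cN_{\bv}\) consists of \(J_0^\triangle,J_1^\triangle,J_2^\triangle\), and \(J_2^\triangle\) cannot lie in an ordered family because some subset of \(J_2^\triangle\) in \(\sS(\bv)\) lies outside \(\cN_{\bv}\). Then \eqref{eq:family-ideals} gives \(\cE_{\bv}^{\perp}=\langle\lambda_2^q,\lambda_1^q\lambda_3^q\rangle\), which is homogeneous, so it restricts to \(S_0\) with the same generators. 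The ordered families are \(\{J_0^\triangle\}\) and \(\cN_q^{\triangle,\circ}\). For \(\{J_0^\triangle\}\), \(P\) contains the products for all other supports, and \(J^{\intr}\) contains \(\langle\lambda_1^q,\lambda_2^q,\lambda_3^q\rangle\) or more. Hence \(J_{\bv}^{L}\) is the intersection of \(\langle\lambda_1^q,\lambda_2^q\rangle\) with \(J^{\intr}_{\{J_0^\triangle\}}\). I expect this intersection to equal \(\langle\lambda_1^q,\lambda_2^q\rangle\), because the singleton family yields only the constant coefficient while \(\cN_q^{\triangle,\circ}\) yields more. Settling this containment is the main obstacle, and it requires computing \(J^{\intr}_{\{J_0^\triangle\}}\) from \eqref{eq:obstruction-P}. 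I would first try to show \(V^{\intr}_{\{J_0^\triangle\}}\subseteq V^{\intr}_{\cN_q^{\triangle,\circ}}\) directly, as in the five-column case where both are \(\C\) for \(q=1\).

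Granting this, \eqref{eq:all-ordered-local-ideal-codimension} gives
\[
\delta_{\bv}^{L}=\dim_\C\frac{\langle\lambda_1^q,\lambda_2^q\rangle}{\langle\lambda_2^q,\lambda_1^q\lambda_3^q\rangle}
=\dim_\C\frac{S_0}{\langle\lambda_2^q,\lambda_1^q\lambda_3^q\rangle}-\dim_\C\frac{S_0}{\langle\lambda_1^q,\lambda_2^q\rangle}.
\]
The first term is \(2q^2\) and the second is \(q^2\), both complete intersections, so \(\delta_{\bv}^{L}=q^2\), which is at least \(\lceil 3q^2/4\rceil\). This disagrees with the claimed equality. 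The resolution should be that \(J_{\bv}^{L}\) is instead \(\langle\lambda_1^q,\lambda_2^q\rangle\cap\langle\lambda_2^q,\lambda_3^q\rangle\) or a similar ideal, chosen so that the quotient has length \(\dim S_0/\langle\lambda_1^q,\lambda_2^q,\lambda_3^q\rangle\). Indeed, the map \(f\mapsto f\lambda_1^q\) identifies \(S_0/\langle\lambda_2^q,\lambda_3^q\rangle:\lambda_1^q\) with \(\langle\lambda_1^q,\lambda_2^q\rangle/\langle\lambda_2^q,\lambda_1^q\lambda_3^q\rangle\)-type quotients, and Lemma~\ref{lem:three-linear-form-powers-length} then gives \(\lceil 3q^2/4\rceil\). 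I would therefore compute \(J_{\bv}^{L}\) carefully and reduce the resulting quotient, via such a multiplication isomorphism, to \(S_0/\langle\lambda_1^q,\lambda_2^q,\lambda_3^q\rangle\).

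Finally, since \(\{\bv\}\in\sP^{\min}\), the lower bound in \eqref{eq:formal-solution-codimension-bounds} gives \eqref{eq:fixed-rank-formal-solution-codimension}. Theorem~\ref{thm:analytic-realization} supplies \(\Omega_q\) and transfers the bound to \eqref{eq:fixed-rank-holomorphic-solution-codimension}. Unboundedness then follows by letting \(q\to\infty\).
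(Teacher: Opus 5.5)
\textbf{There is a genuine gap at the central step.} You never compute \(J^{\intr}_{\{J_0^\triangle\}}\), and what you assert about it is false for \(q\geq2\).

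For \(\cN=\{J_0^\triangle\}\) one has \(K_{\cN}=J_0^\triangle\), \(E=\varnothing\), \(e=1\) and \(M_{\cN}=\langle1\rangle\). Hence \(J^{\amb}_{\cN}=J^{\intr}_{\cN}=\Phi_0(P^0_{\cN}(\bt))\). The minimal sets among the \(J\setminus J_0^\triangle\) with \(J\in\sS(\bv)\setminus\cN\) are the \(q\) copies of \(\{4\}\) and the \(q\) copies of \(\{5\}\). Since \(\ell_4=\lambda_3\) and \(\ell_5=-\lambda_2\), this ideal is \(\cJ_3=\langle\lambda_3^q,\lambda_2^q\rangle\).

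For \(q\geq2\) it does not contain \(\lambda_1^q\), because the \(q\)-th powers of three pairwise nonproportional linear forms are linearly independent. Its inverse system is \(q^2\)-dimensional, so the singleton family contributes much more than the constants. Your heuristic is extrapolated from \(q=1\), where \(\cJ_3=\fm\) and \(q^2=\lceil3q^2/4\rceil\).

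Consequently \(J^{L}_{\bv}=\cJ_1\cap\cJ_3\) with \(\cJ_1=\langle\lambda_1^q,\lambda_2^q\rangle\), not \(\cJ_1\). Your value \(q^2=\dim_{\C}\cJ_1/\langle\lambda_2^q,\lambda_1^q\lambda_3^q\rangle\) is only an upper bound for \(\delta^{L}_{\bv}\). Your last paragraph guesses the right ideal from the desired answer rather than deriving it, so neither the equality nor the lower bound \(\delta^{L}_{\bv}\geq\lceil3q^2/4\rceil\) is established.

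Once \(J^{L}_{\bv}=\cJ_1\cap\cJ_3\) is derived, there are two ways to finish:
\begin{itemize}
\item The paper uses the exact sequence \(0\to S_0/(\cJ_1\cap\cJ_3)\to S_0/\cJ_1\oplus S_0/\cJ_3\to S_0/(\cJ_1+\cJ_3)\to0\), together with \(\dim_{\C}S_0/\langle\lambda_2^q,\lambda_1^q\lambda_3^q\rangle=2q^2\).
\item Your multiplication idea also works when stated correctly. The map \(f\mapsto f\lambda_1^q\) induces \((\cJ_3:\lambda_1^q)/\cJ_3\cong(\cJ_1\cap\cJ_3)/\langle\lambda_2^q,\lambda_1^q\lambda_3^q\rangle\), and the left side has dimension \(\dim_{\C}S_0/(\cJ_3+\langle\lambda_1^q\rangle)\).
\end{itemize}
In either case Lemma~\ref{lem:three-linear-form-powers-length} concludes.

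\textbf{Your argument that the class of \(\bv\) is least is also invalid.} If \(\bv+\bu\) is a fake exponent, you only get \(\nsupp(\bv+\bu)\in\cN_{\bv+\bu}\), that is, \(\bu\) has least \(\bw\)-weight in \(\cF_{I_{\bu}}(\bv)\). Given that, the membership \(I_{\bu}\in\cN_{\bv}\) is equivalent to \(\bw\cdot\bu\geq0\), which is exactly what you want to prove, so the argument is circular.

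The principle also fails in general. It is symmetric in the two fake exponents, so exchanging the roles of \(\bv\) and \(\bv+\bu\) would give both \(\bw\cdot\bu>0\) and \(\bw\cdot(-\bu)>0\). That is impossible whenever a coset contains two distinct fake exponents, as this one does.

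The support table does not repair this. It does not tell you whether a shift in a fiber containing negative-weight elements is a fake exponent. The paper instead solves the fake indicial equations on the coset, namely the \(q\)-th powers of \([\zeta_3]_3[\zeta_5]_3\), \([\zeta_2]_3\zeta_4\) and \(\zeta_2[\zeta_5]_2\). It finds only the lattice points \((0,0)\) and \((-3,2)\), and then checks \(3+2\vartheta>0\).

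The remaining steps of your plan agree with the paper:
\begin{itemize}
\item existence of the exponent from a proper ideal;
\item \(\cN^{\ord}_{\bv}=\cN_q^{\triangle,\circ}\);
\item the identification of \(\cE_{\bv}^{\perp}\);
\item the final appeals to Theorems~\ref{thm:formal-solution-codimension-bounds} and~\ref{thm:analytic-realization}.
\end{itemize}
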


\begin{proof}
	We use the notation \(J_0^\triangle,J_1^\triangle,J_2^\triangle\) from the proof of Theorem~6.2 of \cite{Nak26} and the linear forms \(\lambda_1,\lambda_2,\lambda_3\) defined before that theorem.
	The two ideals in \eqref{eq:family-ideals} are the completions of the homogeneous polynomial ideals with the same generators, and below we use the contractions of these completions to \(S_0\).
	The support table in \eqref{eq:five-column-supports}, repeated in the \(q\) blocks, shows that the ordered negative support families for \(\bv_q^\triangle\) are exactly \(\{J_0^\triangle\}\) and \(\cN_q^{\triangle,\circ}=\{J_0^\triangle,J_1^\triangle\}\), since a family containing \(J_2^\triangle\) is not ordered: the realized negative support formed by the \(q\) copies of \(\{3,5\}\) is a proper subset of \(J_2^\triangle\).
	Since \(J_{\cN_q^{\triangle,\circ}}^{\amb}=\langle\lambda_2^q,\lambda_1^q\lambda_3^q\rangle\) is proper and homogeneous, its inverse system contains the constant polynomial \(1\), which is a nonzero element of the ambient coefficient space at \(\bx^{\bv_q^\triangle}\).
	Theorem~5.5 of \cite{OS25}, applied to \(\cN_q^{\triangle,\circ}\), gives a nonzero canonical series based at \(\bv_q^\triangle\), so \(\bv_q^\triangle\in\sE_{\bbeta_q^\triangle,\bw_q^\triangle}\).
	Thus \(\cN_q^{\triangle,\circ}\) is the largest ordered family, and Lemma~\ref{lem:largest-ordered-family} and the first identity in \eqref{eq:family-ideals} give
	\begin{equation}
		\cE_{\bv_q^\triangle}^{\perp}
		=
		\langle\lambda_2^q,\lambda_1^q\lambda_3^q\rangle.
		\label{eq:fixed-rank-full-local-ideal}
	\end{equation}
	For the singleton family \(\{J_0^\triangle\}\), we have \(K_{\{J_0^\triangle\}}=J_0^\triangle\), \(e=1\), and \(M_{\{J_0^\triangle\}}=\langle1\rangle\), and the monomials \(\bt^{H}\) with \(H\in\cH_{\{J_0^\triangle\}}\) are \(T_4\) and \(T_5\), so the ideals \(J_{\{J_0^\triangle\}}^{\amb}\) and \(J_{\{J_0^\triangle\}}^{\intr}\) both equal
	\begin{equation}
		\langle\lambda_3^q,\lambda_2^q\rangle.
		\label{eq:fixed-rank-singleton-ideal}
	\end{equation}
	Proposition~\ref{prop:all-ordered-local-quotient}, the second identity in \eqref{eq:family-ideals}, and \eqref{eq:fixed-rank-singleton-ideal} now give
	\begin{equation}
		J_{\bv_q^\triangle}^{L}
		=
		\langle\lambda_1^q,\lambda_2^q\rangle
		\cap
		\langle\lambda_3^q,\lambda_2^q\rangle.
		\label{eq:fixed-rank-all-family-intrinsic-ideal}
	\end{equation}

	In the polynomial ring \(S_0\), define \(\cJ_1\), \(\cJ_3\), and \(\cJ_{13}\) by \(\cJ_1=\langle\lambda_1^q,\lambda_2^q\rangle\), \(\cJ_3=\langle\lambda_3^q,\lambda_2^q\rangle\), and \(\cJ_{13}=\langle\lambda_2^q,\lambda_1^q\lambda_3^q\rangle\).
	The three linear forms \(\lambda_1,\lambda_2,\lambda_3\) are pairwise nonproportional, so \(S_0/\cJ_1\) and \(S_0/\cJ_3\) both have length \(q^2\) and \(S_0/\cJ_{13}\) has length \(2q^2\).
	The standard exact sequence \(0\to S_0/(\cJ_1\cap\cJ_3)\to(S_0/\cJ_1)\oplus(S_0/\cJ_3)\to S_0/(\cJ_1+\cJ_3)\to0\) gives \(\dim_{\C}S_0/(\cJ_1\cap\cJ_3)=2q^2-\dim_{\C}S_0/(\cJ_1+\cJ_3)\).
	Using \(\cJ_{13}\subseteq\cJ_1\cap\cJ_3\), \(\dim_{\C}S_0/\cJ_{13}=2q^2\), and \(\cJ_1+\cJ_3=\langle\lambda_1^q,\lambda_2^q,\lambda_3^q\rangle\), we obtain \(\dim_{\C}\bigl((\cJ_1\cap\cJ_3)/\cJ_{13}\bigr)=\dim_{\C}S_0/\langle\lambda_1^q,\lambda_2^q,\lambda_3^q\rangle=\lceil3q^2/4\rceil\), where Lemma~\ref{lem:three-linear-form-powers-length} gives the last equality.
	Completion preserves these finite lengths, so \eqref{eq:fixed-rank-full-local-ideal}, \eqref{eq:fixed-rank-all-family-intrinsic-ideal}, and Proposition~\ref{prop:all-ordered-local-quotient} prove \eqref{eq:fixed-rank-local-all-family-codimension}.

	It remains to locate the class of \(\bv_q^\triangle\) in its \(L\)-coset.
	For an integer pair \((p_{\mathrm{lat}},q_{\mathrm{lat}})\), set \(\zeta_2=2p_{\mathrm{lat}}+3q_{\mathrm{lat}}\), \(\zeta_3=-3p_{\mathrm{lat}}-3q_{\mathrm{lat}}-1\), \(\zeta_4=p_{\mathrm{lat}}+q_{\mathrm{lat}}\), and \(\zeta_5=-3p_{\mathrm{lat}}-q_{\mathrm{lat}}\).
	At \(\bv_q^\triangle+B_q^\triangle\tp{(p_{\mathrm{lat}},q_{\mathrm{lat}})}\), the Euler equations hold identically, and the distractions of the three initial monomials in the reduced Gr\"obner basis are the \(q\)-th powers of the three products in
	\begin{equation}
		[\zeta_3]_3[\zeta_5]_3=0,
		\qquad
		[\zeta_2]_3\zeta_4=0,
		\qquad
		\zeta_2[\zeta_5]_2=0.
		\label{eq:fixed-rank-same-coset-fake-exponents}
	\end{equation}
	If \(\zeta_2=0\), write \(p_{\mathrm{lat}}=3k\) and \(q_{\mathrm{lat}}=-2k\), so that \(\zeta_3=-3k-1\) and \(\zeta_5=-7k\); the first equation in \eqref{eq:fixed-rank-same-coset-fake-exponents} then gives \(k=0\) or \(k=-1\).
	If \(\zeta_2\ne0\), the last equation gives \(\zeta_5\in\{0,1\}\) and the middle equation forces \(\zeta_4=0\) or \(\zeta_2\in\{1,2\}\), and substitution of \(q_{\mathrm{lat}}=-3p_{\mathrm{lat}}-\zeta_5\) gives \(\zeta_2=-7p_{\mathrm{lat}}-3\zeta_5\) and \(\zeta_4=-2p_{\mathrm{lat}}-\zeta_5\); no integer pair \((p_{\mathrm{lat}},q_{\mathrm{lat}})\) with \(\zeta_2\ne0\) then satisfies these conditions.
	Therefore the only fake exponents in this \(L\)-coset are indexed by \((p_{\mathrm{lat}},q_{\mathrm{lat}})=(0,0)\) and \((p_{\mathrm{lat}},q_{\mathrm{lat}})=(-3,2)\), and the relative weight \((-1,\vartheta)\cdot(-3,2)=3+2\vartheta\) of the second exponent is positive, so the class of \(\bv_q^\triangle\) is least in its \(L\)-coset.
	The lower bound in Theorem~\ref{thm:formal-solution-codimension-bounds} now gives \eqref{eq:fixed-rank-formal-solution-codimension}, and Theorem~\ref{thm:analytic-realization} gives \eqref{eq:fixed-rank-holomorphic-solution-codimension} on a suitable open set \(\Omega_q\).
\end{proof}
By Theorem~6.2 of \cite{Nak26} and Theorem~\ref{thm:fixed-rank-unbounded-solution-codimension}, we obtain the following corollary.

\begin{corollary}
	\label{cor:no-rank-only-obstruction-bound}
	Even among homogeneous \(A\)-hypergeometric systems for which the affine semigroup \(\cC(\bw)\) is normal, the column matroid is connected, and the obstruction module for an ordered negative support family has finite length, the dimension of that module admits no upper bound that depends only on \(\rank L\).
	After all exponents occurring in the canonical formal solution space and all ordered negative support families are included, the codimension of the resulting subspace of the holomorphic solution space likewise admits no upper bound that depends only on \(\rank L\).
	More precisely, the family in Theorem~6.2 of \cite{Nak26} has
	\[
		\rank L = 2,\quad
		\dim_{\C}\fD_{\cN_q^{\triangle,\circ}}(e_q^\triangle) =q^2,\quad
		\dim_{\C}\frac{\Sol_{\Omega_q}(H_{A_q^\triangle}(\bbeta_q^\triangle))}{\cS_{\bw_q^\triangle}^{L}(\Omega_q)}\geq\left\lceil\frac{3q^2}{4}\right\rceil.
	\]
\end{corollary}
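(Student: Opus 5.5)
The statement packages three claims: an unboundedness statement for the obstruction module, the same for the holomorphic codimension, and the concrete numerical data for the family of Theorem~6.2 of \cite{Nak26}. I will establish the displayed data first and then read off both unboundedness statements. The hypotheses on the family are taken directly from Theorem~6.2 of \cite{Nak26}: each system in the family is homogeneous, has \(\rank L=2\), has a normal semigroup \(\cC(\bw)\), has a connected column matroid, and \(\cN_q^{\triangle,\circ}\) is an ordered negative support family for \(\bv_q^\triangle\).

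For the module dimension I will use Fact~\ref{fact:finite-presentation} together with the two ideals in \eqref{eq:family-ideals}. These give
\[
\fD_{\cN_q^{\triangle,\circ}}(e_q^\triangle)\simeq\Shat\otimes_{S_0}\langle\lambda_1^q,\lambda_2^q\rangle/\langle\lambda_2^q,\lambda_1^q\lambda_3^q\rangle .
\]
Because \(\lambda_1,\lambda_2,\lambda_3\) are pairwise nonproportional, \(\lambda_1,\lambda_2\) and \(\lambda_2,\lambda_3\) are regular sequences, so the complete intersections have colengths
\[
\dim_{\C}S_0/\langle\lambda_1^q,\lambda_2^q\rangle=q^2,\qquad \dim_{\C}S_0/\langle\lambda_2^q,\lambda_1^q\lambda_3^q\rangle=q\cdot2q=2q^2 .
\]
The quotient therefore has dimension \(2q^2-q^2=q^2\). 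Completion preserves finite length, so \(\dim_{\C}\fD=q^2\), and in particular the module has finite length. This value is presumably already recorded in Theorem~6.2 of \cite{Nak26}, and I would cite it there as a cross-check.

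The holomorphic bound \(\lceil3q^2/4\rceil\) is exactly \eqref{eq:fixed-rank-holomorphic-solution-codimension} of Theorem~\ref{thm:fixed-rank-unbounded-solution-codimension}, with \(\Omega_q\) chosen as in that theorem. Since \(\rank L=2\) for every \(q\), while both \(q^2\) and \(\lceil3q^2/4\rceil\) tend to infinity with \(q\), any bound depending only on \(\rank L\) would have to bound a single constant above both sequences. No such constant exists, which gives both unboundedness assertions.

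I see no real obstacle in this argument. The only point needing care is that the obstruction module in the first assertion refers to an ordered family, and that the finite-length hypothesis and the other structural properties hold uniformly in \(q\). I would verify these properties by citing Theorem~6.2 of \cite{Nak26} rather than rederiving them.
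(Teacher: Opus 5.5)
Your proposal is correct and follows the paper's argument. The lattice rank and the structural properties come from Theorem~6.2 of \cite{Nak26}, the holomorphic bound comes from Theorem~\ref{thm:fixed-rank-unbounded-solution-codimension}, and unboundedness follows because both quantities grow with \(q\) while \(\rank L=2\) stays fixed. The only difference is that you recompute \(\dim_{\C}\fD_{\cN_q^{\triangle,\circ}}(e_q^\triangle)=2q^2-q^2=q^2\) from Fact~\ref{fact:finite-presentation} and \eqref{eq:family-ideals}, where the paper quotes this value from \cite{Nak26}; this is a valid self-contained check, given that homogeneous ideals of \(S_0\) are determined by their extensions to \(\Shat\).
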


\begin{proof}
	The lattice rank is 2 for every \(q\), but by Theorem~6.2 of \cite{Nak26} and Theorem~\ref{thm:fixed-rank-unbounded-solution-codimension} the dimension of the obstruction module and the codimension of \(\cS_{\bw_q^\triangle}^{L}(\Omega_q)\) tend to infinity with \(q\).
\end{proof}
\section*{Acknowledgments}

Large language models were used in the preparation of this paper.
Claude was used to revise the exposition.
Every mathematical statement was checked by the author against the definitions and the cited results, and the author is responsible for the content.
\ifarxiv
The ancillary scripts reproduce selected computations in exact arithmetic in the environment specified in the ancillary README, which describes the scope of the independent checks in SageMath and Macaulay2.
\fi

\end{document}